\newcommand{\be}{\begin{eqnarray}}
\newcommand{\ben}{\begin{eqnarray*}}
\newcommand{\en}{\end{eqnarray}}
\newcommand{\enn}{\end{eqnarray*}}
\newtheorem{theorem}{Theorem}[section]
\newtheorem{lemma}{Lemma}[section]
\newtheorem{prp}[theorem]{Proposition}
\newtheorem{thm}[theorem]{Theorem}
\newtheorem{dfn}{Definition}[section]
\begin{document}
\renewcommand{\theequation}{\arabic{section}.\arabic{equation}}
\begin{titlepage}
\title{\bf 3D tamed Navier-Stokes equations driven by multiplicative L\'{e}vy noise: Existence, uniqueness and large deviations
}
\author{Zhao Dong$^{1}$, \quad Rangrang Zhang$^{2,}$\thanks{Corresponding author.}\\
{\small $^1$ RCSDS, Academy of Mathematics and Systems Science, Chinese Academy of Sciences, Beijing 100190, China.}\\
{\small $^2$ School of  Mathematics and Statistics,
Beijing Institute of Technology, Beijing, 100081, China.}\\
%{\small No. 55 Zhongguancun East Road, Haidian District, Beijing, 100190,}\\
%{\small $^2$ School of Mathematical Sciences, University of Science and Technology of China,}\\
% %{\small Key Laboratory of Wu Wen-Tsun Mathematics, Chinese Academy of Sciences,}\\
% {\small No. 96 Jinzhai Road, Hefei, 230026, P. R. China}\\
({\sf dzhao@amt.ac.cn}, {\sf rrzhang@amss.ac.cn} )}
\date{}
\end{titlepage}
\maketitle

\noindent\textbf{Abstract}:
In this paper, we show the existence and uniqueness of a strong solution to stochastic 3D tamed Navier-Stokes equations driven by multiplicative L\'{e}vy noise with periodic boundary conditions. Then we establish the large deviation principles of the strong solution on the state space $\mathcal{D}([0,T];\mathbb{H}^1)$, where the weak convergence approach plays a key role.

\noindent \textbf{AMS Subject Classification}:\ \ Primary 60F10 Secondary 60H15.

\noindent\textbf{Keywords}: stochastic 3D tamed Navier-Stokes equations; L\'{e}vy noise; large deviations; weak convergence method.

\section{Introduction}
This paper concerns Navier-Stokes equations, which describes the time evolution of an incompressible fluid.
Mathematically or statistically, such physical laws should incorporate with noise influences, due to the lack of knowledge of certain physical parameters as well as bias or incomplete measurements arising in experiments or modeling. Fix any $T>0$ and let $(\Omega,\mathcal{F},\mathbb{P},\{\mathcal{F}_t\}_{t\in
[0,T]})$ be a stochastic basis. Without loss of generality, here the filtration $\{\mathcal{F}_t\}_{t\in [0,T]}$ is assumed to be complete and $W(t)$ is $\{\mathcal{F}_t\}_{t\in [0,T]}-$Wiener process. We use $\mathbb{E}$ to denote the expectation with respect to $\mathbb{P}$.
The Cauchy problem of the three dimensional stochastic Navier-Stokes equations driven by Gaussian random noise can be written as
\begin{eqnarray*}
\left\{
  \begin{array}{ll}
   du-\nu \Delta u dt+(u\cdot \nabla) udt+\nabla p dt=\sigma(t, u(t))dW(t), &  {\rm{in}} \ \mathbb{T}^3\times[0,\infty)\\
    div\ u(t,x)=0,  & {\rm{in}} \ \mathbb{T}^3\times[0,\infty)\\
    u(0)=u_0, &
  \end{array}
\right.
\end{eqnarray*}
where $u(t,x)=(u^1(t,x),u^2(t,x),u^3(t,x))$ represents the velocity field, $\nu$ is the viscosity constant, $p(t,x)$ denotes the pressure. $\mathbb{T}^3\subset\mathbb{R}^3$ denotes the three dimensional torus (suppose the periodic length is $1$). $\sigma$ is a measurable function, which will be specified in subsection \ref{s-2}.

 As we all know, the stochastic 2D Navier-Stokes equation has been studied extensively in the literature, however, there exist serious obstacles in dealing with the stochastic 3D Navier-Stokes equations. Up to now, the existence of martingale solutions and stationary solutions of the stochastic 3D Navier-Stokes equation was proved by Flandoli and Gatarek \cite{FG95} and Mikulevicius and Rozovskii \cite{M-R} under more general conditions. However, the uniqueness still remains open. Later, a new model called stochastic 3D tamed Navier-Stokes equations was proposed by R\"{o}ckner and Zhang in \cite{R-Z-0-0}, which is given by
\begin{eqnarray}\label{equ-81}
\left\{
  \begin{array}{ll}
   du-\nu \Delta u dt+(u\cdot \nabla) udt+\nabla p dt+g_N(|u(t)|^2)u(t)dt=\sigma(t, u(t))dW(t), &  \\
    div\ u(t,x)=0, \quad t>0, & \\
    u(0)=u_0, &
  \end{array}
\right.
\end{eqnarray}
where $g_N$ is a smooth function from $\mathbb{R}^{+}$ to $\mathbb{R}^{+}$, whose precise definition is given by (\ref{equ-83}). As stated in \cite{R-Z-0-0}, the motivation to study (\ref{equ-81}) comes from the deterministic case. The important observation is that the strong solution of 3D Navier-Stokes equation coincides with the strong solution  of (\ref{equ-81}) for large enough $N$.
There are several results on the stochastic 3D tamed Navier-Stokes equations driven by Gaussian random noise. We mention two of them. In \cite{R-Z-0-0}, the authors established the existence of a unique strong solution (strong in the probabilistic sense and weak in the PDE sense) to equation (\ref{equ-81}) indirectly by employing the Yamada-Watanabe Theorem, i.e.,  proving the existence of martingale solutions and pathwise uniqueness. Then, they also studied the Feller property and invariant measures for the corresponding semigroup generated by the strong solution.  Using a direct approach, R\"{o}ckner and Zhang \cite{R-Z} established the existence and uniqueness of strong solutions to (\ref{equ-81}). Moreover, they proved small time large deviation principles for the stochastic 3D tamed Navier-Stokes equations. For more information on this model, we refer the reader to \cite{R-Z-Z, R-Z-0} and the references therein.

\
In recent years, introducing a jump-type noises such as L\'{e}vy-type or
Poisson-type perturbations has become extremely popular for modeling natural phenomena,
because these noises are good choice to reproduce the performance of some natural
phenomena in real world models, such as some large moves and unpredictable events.
There
is a large amount of literature on the existence and uniqueness of solutions to stochastic
partial differential equations (SPDEs) driven by jump-type noises. For example, Brze\'{z}niak et al. \cite{BHZ} studied the existence and uniqueness of the solution to an abstract
nonlinear equation driven by multiplicative L\'{e}vy noise. Their results can cover some types of SPDEs, such as the stochastic 2D Navier-Stokes Equations,
the 2D stochastic Magneto-Hydrodynamic Equations, the 2D stochastic Boussinesq Model
for the B\'{e}nard Convection, the 2D stochastic Magnetic B\'{e}rnard Problem and several stochastic Shell Models of
turbulence (the readers can refer to \cite{BLZ}). However, there are still a plenty of important models that do not satisfy the conditions required by \cite{BHZ}.
Recently, S. Shang et al. \cite{S-Z-Z} considered a stochastic model of incompressible non-Newtonian fluids of second grade on a bounded domain of $\mathbb{R}^2$ driven by L\'evy noise. Applying the variational approach, the authors established the global existence and uniqueness of strong probabilistic solution.
As far as we know, there are no results on the stochastic 3D tamed Navier-Stokes equations driven by multiplicative L\'{e}vy noise, which can be written as
\begin{eqnarray}\label{equ-80}
\left\{
  \begin{array}{ll}
   du-\nu \Delta u dt+(u\cdot \nabla) udt+\nabla pdt+g_N(|u|^2(t))u(t)dt=\int_{\mathbb{Z}} \sigma(t-, u(t-),z)\tilde{\eta}(dt, dz), &  \\
    div\ u(t,x)=0, \quad t>0, & \\
    u(0)=u_0\in \mathbb{H}^1, &
  \end{array}
\right.
\end{eqnarray}
where $\tilde{\eta}$ is the compensated time homogeneous Poisson random measure on a certain locally compact Polish space $(\mathbb{Z}, \mathcal{B}(\mathbb{Z}))$. On the other hand, the large deviation principle for stochastic partial differential equations (SPDE) driven by L\'{e}vy noise attracts a lot of interests from mathematical community. R\"{o}ckner and Zhang \cite{R-TS} established
large deviations for SPDEs driven by an additive jump noise. The case of multiplicative L\'{e}vy noise was studied by \'{S}wiech and Zabczyk \cite{S-Z}
and Budhiraja, Chen and Dupuis \cite{B-C-D} where the large deviation was obtained on a larger space (hence, with a weaker topology) than the actual state space of the solution. Yang, Zhai
and Zhang \cite{Y-Z-Z} obtained the large deviation principles on the actual state space of stochastic
evolution equations with regular coefficients driven by multiplicative L\'{e}vy noise. Later, Zhai and Zhang \cite{Z-Z} proved the large deviations for 2D stochastic
Navier-Stokes equations driven by
multiplicative L\'{e}vy noises on the space $\mathcal{D}([0,T];\mathbb{H})$, the space of $\mathbb{H}$-valued right continuous functions with left limits on $[0,T ]$.

The purpose of this paper is two-fold. The first part is to show the existence and uniqueness of strong solution to (\ref{equ-80}) based on Galerkin's approximation and a kind of local monotonicity of the coefficients. Concretely, we prove the result via three steps: we firstly make some non-trivial
a priori estimates of the Galerkin's approximation, then we show that the limit of those
approximate solutions solves the original equation by applying the monotonicity arguments,
finally we prove the uniqueness of solutions.
The second part is to prove the small perturbation large deviation principle (LDP) for the stochastic 3D tamed Navier-Stokes equations driven by multiplicative L\'{e}vy noise on the space $\mathcal{D}([0,T];\mathbb{H}^1)$, which provides the exponential decay  of small probabilities associated  with
the corresponding stochastic dynamical systems with small noise. The proof of the large deviations will be based on the weak convergence approach initiated by Budhiraja, Chen and Dupuis \cite{B-C-D} and Budhiraja, Dupuis and Maroulas \cite{B-D-M}. As an important part of the proof, we need to obtain global well-posedness of the so-called skeleton equation by using similar method as the first part. To complete the proof of the large deviation principle, we also need to show the weak convergence of the perturbations of the
system (\ref{equ-80}) to the skeleton equation. During the proof process,
we firstly need to establish the
tightness of the solutions of the perturbations of the
system (\ref{equ-80}) in a larger space $\mathcal{D}([0,T];D(A^{-\alpha}))$ with $\alpha\geq 0$,
then with the aid of the Skorohod representation theorem we obtain the weak convergence
actually takes place in the space $\mathcal{D}([0,T];\mathbb{H}^1)$.
\

Our paper is organized as follows. The mathematical formulation of stochastic 3D  tamed Navier-Stokes equations and some useful nonlinear term estimates are presented in Section 2. In Section 3, we prove the existence and uniqueness of strong solution to the  stochastic 3D  tamed Navier-Stokes equations. The weak convergence method and the statement of the main result are introduced in Section 4. Then the skeleton equation is studied in Section 5. At last, the large deviation principle is proved in Section 6.

Throughout this paper, $C$ is a positive constant whose value may be different
from line to line.

\section{Formulations}
Let $u(t,x)=(u^1(t,x),u^2(t,x),u^3(t,x))$ be a vector function
on $\mathbb{T}^3$. The following notations will be used:
 $|u|^2=\sum^3_{i=1}|u^i|^2$, $div \ u:=\sum^3_{i=1}\partial_i u^i$ and $(u\cdot \nabla)u=\sum^3_{i=1}u^i\partial_i u$. Throughout the paper, $g_N(\cdot)$ will denote a fixed smooth function from $\mathbb{R}^+$ to $\mathbb{R}^+$ such that for some $N>0$,
\begin{eqnarray}\label{equ-83}
\left\{
  \begin{array}{ll}
    g_N(r)=0, & {\rm{if}}\ r\leq N, \\
    g_N(r)=\frac{r-N}{\nu}, & {\rm{if}}\ r\geq N+1, \\
    0\leq g'_N(r)\leq C, & {\rm{if}}\ r\geq 0.
  \end{array}
\right.
\end{eqnarray}
Without loss of generality, we assume the viscosity coefficient $\nu=1$.
\subsection{Functional spaces}\label{ss-2}
Let $\mathcal{L}(K_1;K_2)$ (resp. $\mathcal{L}_2(K_1;K_2)$) be the space of bounded (resp. Hilbert-Schmidt) linear operators from the Hilbert space $K_1$ to $K_2$, whose norm is denoted by $\|\cdot\|_{\mathcal{L}(K_1;K_2)}(\|\cdot\|_{\mathcal{L}_2(K_1;K_2)})$.
For a topological space $\mathcal{E}$, denote the corresponding Borel $\sigma-$field by $\mathcal{B}(\mathcal{E})$. For a metric space $\mathbb{X}$, $C([0,T];\mathbb{X})$ stands for the space of continuous functions from $[0,T]$ into $\mathbb{X}$ and $\mathcal{D}([0,T];\mathbb{X})$ represents the space of right continuous functions with left limits from $[0,T]$ into $\mathbb{X}$. For a metric space $\mathbb{Y}$, denote by $M_b(\mathbb{Y}), C_b(\mathbb{Y})$ the space of real valued bounded $\mathcal{\mathbb{Y}}/ \mathcal{\mathbb{R}}-$measurable maps and real valued bounded continuous functions, respectively.

Let $C^{\infty}(\mathbb{T}^3)=C^{\infty}(\mathbb{T}^3; \mathbb{R}^3)$ denote the set of all smooth periodic functions from $\mathbb{R}^3$ to $\mathbb{R}^3$. For $p\geq 1$,  $L^p(\mathbb{T}^3)=L^p(\mathbb{T}^3;\mathbb{R}^3)$ stands for the vector-valued $L^p$ space in which the norm is denoted by $\|\cdot\|_{L^p}$. For a non-negative integer $m\geq 0$, let $H^m$ be the usual Sobolev space on $\mathbb{T}^3$ with values in $\mathbb{R}^3$, i.e., the closure of $C^{\infty}(\mathbb{T}^3)$ with respect to the norm:
\[
\|u\|^2_{H^m}=\int_{\mathbb{T}^3} |(I-\Delta)^{\frac{m}{2}}u|^2dx,
\]
where
\[
(I-\Delta)^{\frac{m}{2}} u:=\Big(( I-\Delta)^{\frac{m}{2}} u^1, (I-\Delta)^{\frac{m}{2}} u^2,( I-\Delta)^{\frac{m}{2}} u^3\Big),
\]
is defined by Fourier transformation.

For $m\in \mathbb{N}_0$, set
\[
\mathbb{H}^m:=\{u\in H^{m}: div\ u=0\}.
\]
Then the norm of $H^m$ restricted to $\mathbb{H}^m$ will be denoted by $\|\cdot\|_{\mathbb{H}^m}$. In particular, $\mathbb{H}^0$ is a closed linear subspace of the Hilbert space $L^2(\mathbb{T}^3)=H^0$. Let $\mathcal{P}$ be the orthogonal projection from $L^2(\mathbb{T}^3)$ to $\mathbb{H}^0$.
It is well-known that $\mathcal{P}$ commutes with the derivative operators and that $\mathcal{P}$ can be restricted to a bounded linear operator from $H^m$ to $\mathbb{H}^m$ (see \cite{Lions}).
For any $u\in \mathbb{H}^0$ and $v\in L^2(\mathbb{T}^3)$, we have
\[
\langle u, v\rangle_{\mathbb{H}^0}:=\langle u, \mathcal{P}v\rangle_{\mathbb{H}^0}=\langle u, v\rangle_{L^2}.
\]
Moreover, for $u\in \mathbb{H}^0$ and $v\in \mathbb{H}^2$, the inner product $\langle u,v\rangle_{\mathbb{H}^1}$ is taken
in the generalized sense, i.e.,
\[
\langle u, v\rangle_{\mathbb{H}^1}:=\langle u, (I-\Delta)v\rangle_{\mathbb{H}^0}.
\]
For any $u\in H^2\cap\mathbb{H}^1$, define
\begin{eqnarray}\label{equ-3}
A u:=-\mathcal{P}\Delta u.
\end{eqnarray}
It is well-known that the Stokes operator $A$ is a positive self-adjoint operator in $\mathbb{H}^0$ with a compact resolvent. Let $\{e_i\}^{\infty}_{i=1}\subset \mathbb{H}^2$ be an orthonormal basis of $\mathbb{H}^0$ composed of eigenfunctions of $A$ with corresponding eigenvalues $0< \lambda_1\leq \lambda_2\leq\cdot\cdot\cdot\rightarrow \infty$ satisfies $Ae_i=\lambda_i e_i$.
 We will use fractional powers of the operator $A$, denoted by $A^{\alpha}$, as well as their domains $D(A^{\alpha})$ for $\alpha\in \mathbb{R}$. Note that
\[
D(A^{\alpha})=\Big\{u=\sum^{\infty}_{i=1}u_i\cdot e_i:\sum^{\infty}_{i=1}\lambda_i^{2\alpha}u^2_i<\infty\Big\}.
\]
We may endow $D(A^{\alpha})$ with the inner product
\[
(u,v)_{D(A^{\alpha})}=\langle A^{\alpha} u,A^{\alpha} v\rangle_{\mathbb{H}^0}\quad {\rm{for}}\ u,v\in D(A^{\alpha}).
\]
Hence, $(D(A^{\alpha}), (\cdot,\cdot)_{D(A^{\alpha})})$ is a Hilbert space and $\{\lambda^{-\alpha}_i e_i\}_{i\in\mathbb{N}}$ is a complete orthonormal system of $D(A^{\alpha})$. By Riesz representative theorem, $D(A^{-\alpha})$ is the dual space of $D(A^{\alpha})$.

For any $u,v\in \mathbb{H}^1$, set
\begin{eqnarray*}
B(u,v):=\mathcal{P}((u\cdot\nabla)v).
\end{eqnarray*}
If $u=v$, we write $B(u)=B(u,u)$. By the incompressible condition, it gives that $\langle B(u,v),v\rangle_{\mathbb{H}^0}=0$.

%Let $V$ be defined by
%\[
%V:=\{u:u\in C^{\infty}_0(\mathbb{T}^3;R^3), \ div\ u=0\}.
%\]
Letting the operator $\mathcal{P}$ act on both sides of (\ref{equ-80}), we get
\begin{eqnarray}\label{equ-4}
\left\{
  \begin{array}{ll}
   du+Au dt+B(u)dt+\mathcal{P}g_N(|u|^2(t))u(t)dt=\int_{\mathbb{Z}} \sigma(t-, u(t-),z)\tilde{\eta}(dt,dz), &  \\
    div\ u(t,x)=0, \quad t>0, & \\
    u(0)=u_0\in \mathbb{H}^1. &
  \end{array}
\right.
\end{eqnarray}

%\subsection{Poisson random measure}

\subsection{Poisson random measure and Hypotheses}\label{s-2}
Let $(\mathbb{Z},\mathcal{B}(\mathbb{Z}))$ be a locally compact Polish space and let $\vartheta$ be a $\sigma-$finite positive measure on it. Suppose $(\Omega, \mathcal{F}, \mathcal{F}_t, \mathbb{P})$ is a filtered probability space with expectation $\mathbb{E}$.
Set $C_c(\mathbb{Z})$ be the space of continuous functions with compact supports. Denote
\[
\mathcal{M}_{FC}(\mathbb{Z}):=\Big\{{\rm{measure}}\ \vartheta\  {\rm{on}}\ (\mathbb{Z}, \mathcal{B}(\mathbb{Z}))\ {\rm{such\ that}}\ \vartheta(K)<\infty\  {\rm{for\ every\ compact}}\ K\ {\rm{in}}\ \mathbb{Z}\Big\}.
\]
Endow $\mathcal{M}_{FC}(\mathbb{Z})$ with the weakest topology such that for every $f\in C_c(\mathbb{Z})$, the function $\vartheta\rightarrow \langle f, \vartheta\rangle=\int_{\mathbb{Z}}f(u)d\vartheta(u), \vartheta\in \mathcal{M}_{FC}(\mathbb{Z})$ is continuous. This topology can be metrized such that $\mathcal{M}_{FC}(\mathbb{Z})$ is a Polish space (see \cite{B-D-M}).
Let $T>0$, set $\mathbb{Z}_T=[0,T]\times \mathbb{Z}$. Fix a measure $\vartheta\in \mathcal{M}_{FC}(\mathbb{Z})$ and let $\vartheta_T=\lambda_T\otimes \vartheta$, where $\lambda_T$ is Lebesgue measure on $[0,T]$. We recall the definition of Poisson random measure from \cite{I-W} that
\begin{dfn}
We call measure $\eta$ a Poisson random measure on $\mathbb{Z}_T$ with intensity measure $\vartheta_T$ is a $\mathcal{M}_{FC}(\mathbb{Z})-$valued random variable such that
 \begin{description}
   \item[(1)] for each $B\in \mathcal{B}(\mathbb{Z}_T)$ with $\vartheta_T(B)<\infty$, $\eta(B)$ is a Poisson distribution with mean $\vartheta_T(B)$,
   \item[(2)] for disjoint $B_1,\cdot\cdot\cdot, B_k\in \mathcal{B}(\mathbb{Z}_T)$, $\eta(B_1),\cdot\cdot\cdot, \eta(B_k) $ are mutually independent random variables.
 \end{description}

\end{dfn}
We will denote by
$\tilde{\eta}=\eta-\vartheta_T$ the compensated time homogeneous Poisson random measure associated to $\eta$.
Assume $(H, |\cdot|_H)$ is a Hilbert space. Let $L^2(\Omega\times [0,T]; L^2(\mathbb{Z}, \vartheta; H))$ be the space of predictable process
$X: \mathbb{R}^+\times \mathbb{Z}\times \Omega\rightarrow H$ satisfying
\begin{eqnarray*}
\mathbb{E}\int^T_0\int_{\mathbb{Z}}|X(r,z)|^2_H\vartheta (dz)dr< \infty, \quad T>0.
\end{eqnarray*}
Then, it follows from \cite{BHZ} that for every $X\in L^2(\Omega\times [0,T]; L^2(\mathbb{Z}, \vartheta; H))$,
\begin{eqnarray}
\mathbb{E}\Big|\int^t_0\int_{\mathbb{Z}}X(r,z)\tilde{\eta}(dr,dz)\Big|^2_H=\mathbb{E}\int^t_0\int_{\mathbb{Z}}|X(r,z)|^2_H\vartheta (dz)dr,\quad t\geq0.
\end{eqnarray}

To obtain the global well-posedness of (\ref{equ-4}), we need the following hypotheses.
\begin{description}
   \item[\textbf{Hypothesis H0}]
Let $\sigma$ be a predictable mapping from $[0,T]\times \mathbb{H}^1\times \mathbb{Z}\rightarrow \mathbb{H}^1$ (resp. $[0,T]\times \mathbb{H}^0\times \mathbb{Z}\rightarrow \mathbb{H}^0$).
\begin{description}
\item[(A)] There exists a positive constant ${K}_1$ such that
\begin{eqnarray}\label{equ-5-1}
                                       \int_{\mathbb{Z}}\|\sigma(t,u,z)\|^{2}_{\mathbb{H}^0}\vartheta(dz)\leq {K}_1(1+\|u\|^2_{\mathbb{H}^0}),\quad u\in \mathbb{H}^0, \ t\in [0,T].
                                       \end{eqnarray}
And there exists a positive constant $K_2$ such that
                                       \begin{eqnarray}\label{equ-6-1}
                                       \int_{\mathbb{Z}}\|\sigma(t,u_1,z)-\sigma(t,u_2,z)\|^2_{\mathbb{H}^0}\vartheta(dz)\leq K_2\|u_1-u_2\|^2_{\mathbb{H}^0},\quad u_1,u_2\in \mathbb{H}^0, \ t\in [0,T].
         \end{eqnarray}
  \item[(B)]
There exists a positive constant $L_1$ such that
                                        \begin{eqnarray}\label{equ-5}
                                       \int_{\mathbb{Z}}\|\sigma(t,u,z)\|^{2}_{\mathbb{H}^1}\vartheta(dz)\leq L_1(1+\|u\|^2_{\mathbb{H}^1}),\quad u\in \mathbb{H}^1, \ t\in [0,T].
                                       \end{eqnarray}
There exists a positive constant $L_2$ such that
                                        \begin{eqnarray}\label{equ-5-11}
                                       \int_{\mathbb{Z}}\|\sigma(t,u,z)\|^{6}_{\mathbb{H}^1}\vartheta(dz)\leq L_2(1+\|u\|^6_{\mathbb{H}^1}),\quad u\in \mathbb{H}^1, \ t\in [0,T].
                                       \end{eqnarray}

 And there exists a positive constant $L_3$ such that
                                       \begin{eqnarray}\label{equ-6}
                                       \int_{\mathbb{Z}}\|\sigma(t,u_1,z)-\sigma(t,u_2,z)\|^2_{\mathbb{H}^1}\vartheta(dz)\leq L_3\|u_1-u_2\|^2_{\mathbb{H}^1},\quad u_1,u_2\in \mathbb{H}^1, \ t\in [0,T].
                                       \end{eqnarray}

                                 \end{description}
\end{description}
Now, we introduce the definition of a strong solution to (\ref{equ-4}).
\begin{dfn}\label{def-1}
The system (\ref{equ-4}) has a strong solution if for every stochastic basis $(\Omega,\mathcal{F},\mathbb{P}, \{\mathcal{F}_t\}_{t\geq 0})$ and a time homogeneous Poisson random measure $\tilde{\eta}$ on $(\mathbb{Z},\mathcal{B}(\mathbb{Z}))$ over the stochastic basis with intensity measure $\vartheta$, there exists a progressively measurable process $u:[0,T]\times \Omega\rightarrow \mathbb{H}^1$ with $\mathbb{P}-$a.s.
\begin{eqnarray}\label{equ-8}
u(\cdot, \omega)\in \mathcal{D}([0,T];\mathbb{H}^1)\cap L^2([0,T]; \mathbb{H}^2)
\end{eqnarray}
such that
\begin{eqnarray}\label{equ-9}
u(t)=u_0-\int^t_0A u(s)ds-\int^t_0 B( u(s)) ds-\int^t_0\mathcal{P}g_N(|u|^2)uds+ \int^t_0\int_{\mathbb{Z}}\sigma(s-,u(s-),z)\tilde{\eta}(ds,dz),
\end{eqnarray}
holds in $\mathbb{H}^0$ for all $t\geq 0$, $\mathbb{P}-$a.s..
\end{dfn}

\subsection{Some inequalities and It\^{o} formula}
Let
\[
F(u):=-Au -B(u)-\mathcal{P}(g_N(|u|^2)u).
\]
In order to prove the global well-posedness of (\ref{equ-4}), we need the following a priori estimates of nonlinear terms. Referring to Lemma 2.3 in \cite{R-Z-0-0} and the proof of Theorem 3.1 and Lemma 5.3 in \cite{R-Z}, it gives that
\begin{lemma}
\begin{description}
  \item[(1)] For $u\in \mathbb{H}^0$,
\begin{eqnarray}\label{equ-11}
\langle F(u), u\rangle_{\mathbb{H}^0}\leq C_N \|u\|^2_{\mathbb{H}^0}.
\end{eqnarray}
  \item[(2)] For $u\in \mathbb{H}^2$,
%\begin{eqnarray*}
%-\langle Au, u\rangle_{\mathbb{H}^1}&=&-\|u\|^2_{H^2}+\|\nabla u\|^2_{L^2}+\|u\|^2_{L^2},\\
%-\langle B(u), u\rangle_{\mathbb{H}^1}&\leq& \frac{1}{2}\|u\|^2_{H^2}+\frac{1}{2}\||u|\cdot |\nabla u|\|^2_{L^2},\\
%-\langle g_N(|u|^2)u, u\rangle_{\mathbb{H}^1}&\leq& -\||u|\cdot |\nabla u|\|^2_{L^2}+C_N \|\nabla u\|^2_{L^2}.
%\end{eqnarray*}
%Hence,
\begin{eqnarray}\label{equ-7}
\langle F(u), u\rangle_{\mathbb{H}^1}\leq -\frac{1}{2}\|u\|^2_{\mathbb{H}^2} -\frac{1}{2}\||u|\cdot |\nabla u|\|^2_{L^2}+C_N \|\nabla u\|^2_{\mathbb{H}^0}+\|u\|^2_{\mathbb{H}^0}.
\end{eqnarray}
  \item[(3)] For $u_1, u_2\in \mathbb{H}^2$, it follows that
\begin{eqnarray*}
-\langle A(u_1-u_2), u_1-u_2\rangle_{\mathbb{H}^0}&=&-\|u_1-u_2\|^2_{\mathbb{H}^1}+\|u_1-u_2\|^2_{\mathbb{H}^0},\\
-\langle B(u_1,u_1)-B(u_2,u_2), u_1-u_2\rangle_{\mathbb{H}^0}&\leq& \frac{1}{2}\|u_1-u_2\|^2_{\mathbb{H}^1}+C\|u_2\|_{\mathbb{H}^1}\|u_2\|_{\mathbb{H}^2}\|u_1-u_2\|^2_{\mathbb{H}^0},\\
-\langle g_N(|u_1|^2)u_1-g_N(|u_2|^2)u_2, u_1-u_2\rangle_{\mathbb{H}^0}&\leq& C\|u_2\|_{\mathbb{H}^1}\|u_2\|_{\mathbb{H}^2}\|u_1-u_2\|^2_{\mathbb{H}^0}.
\end{eqnarray*}
Hence, we have
\begin{eqnarray}\label{equ-18}
\langle F(u_1)-F(u_2), u_1-u_2\rangle_{\mathbb{H}^0}\leq-\frac{1}{2}\|u_1-u_2\|^2_{\mathbb{H}^1}
+C_0(\|u_2\|_{\mathbb{H}^1}\|u_2\|_{\mathbb{H}^2}+1)\|u_1-u_2\|^2_{\mathbb{H}^0}.
\end{eqnarray}
We emphasize the constant $C_0>1$, as it plays a key role in the proof of Theorem \ref{thm-1}.
  \item[(4)] For $u_1, u_2\in \mathbb{H}^2$, it gives that
%\begin{eqnarray*}
%-\langle A(u_1-u_2), u_1-u_2\rangle_{\mathbb{H}^1}&=&-\|u_1-u_2\|^2_{H^2}+\|\nabla u_1-\nabla u_2\|^2_{L^2}+\| u_1- u_2\|^2_{L^2},\\
%-\langle B(u_1,u_1)-B(u_2,u_2), u_1-u_2\rangle_{\mathbb{H}^1}&\leq& \frac{1}{2}\|u_1-u_2\|^2_{H^2}+C\|u_1-u_2\|^2_{\mathbb{H}^1}\|u_1\|^4_{\mathbb{H}^1}\\
%&&\ +C\|u_1-u_2\|^2_{\mathbb{H}^1}\|u_2\|_{\mathbb{H}^1}\|u_2\|_{H^2},\\
%-\langle g_N(|u_1|^2)u_1-g_N(|u_2|^2)u_2, u_1-u_2\rangle_{\mathbb{H}^1}&\leq&  \frac{1}{4}\|u_1-u_2\|^2_{H^2}+C\|u_1-u_2\|^2_{\mathbb{H}^1}(\|u_1\|^2_{\mathbb{H}^1}+\|u_2\|^2_{\mathbb{H}^1})^2.
%\end{eqnarray*}
%Hence, utilizing the H\"{o}lder inequality, it yields
\begin{eqnarray}\notag
&&\langle F(u_1)-F(u_2), u_1-u_2\rangle_{\mathbb{H}^1}\\
\label{equ-55}
&\leq&-\frac{1}{4}\|u_1-u_2\|^2_{\mathbb{H}^2}+C(1+\|u_1\|^4_{\mathbb{H}^1}+\|u_2\|^4_{\mathbb{H}^1}+\|u_2\|^2_{\mathbb{H}^2})\|u_1-u_2\|^2_{\mathbb{H}^1}.
\end{eqnarray}

\end{description}
\end{lemma}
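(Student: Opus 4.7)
The plan is to treat each of the four estimates by combining three standard ingredients: the orthogonality $\langle B(u),u\rangle_{\mathbb{H}^0}=0$ coming from incompressibility, integration by parts for the Stokes operator $A$, and pointwise estimates for the taming nonlinearity that exploit the explicit form of $g_N$ in (\ref{equ-83}). For the monotonicity-type inequalities (3)--(4) involving a difference $w:=u_1-u_2$, the main idea is to split each nonlinear difference $\Phi(u_1)-\Phi(u_2)$ into pieces that are either (i) linear in $w$ and absorbed into the dissipation via Young's inequality, or (ii) quadratic in $w$ with a coefficient depending only on higher Sobolev norms of one of the $u_i$'s, which becomes the growth factor on the right-hand side.

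For (1), one simply observes that $\langle Au,u\rangle_{\mathbb{H}^0}=\|\nabla u\|_{L^2}^2\geq 0$, $\langle B(u),u\rangle_{\mathbb{H}^0}=0$, and $\langle \mathcal{P}(g_N(|u|^2)u),u\rangle_{\mathbb{H}^0}=\int_{\mathbb{T}^3}g_N(|u|^2)|u|^2\,dx\geq 0$, so the left-hand side of (\ref{equ-11}) is in fact non-positive and the bound holds for any $C_N\geq 0$. For (2), I expand $\|u\|_{\mathbb{H}^1}^2=\|u\|_{\mathbb{H}^0}^2+\|\nabla u\|_{L^2}^2$ and test $F(u)$ against $(I-\Delta)u$. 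The Stokes part yields $-\|u\|_{\mathbb{H}^2}^2+\|u\|_{\mathbb{H}^0}^2$; the convective part, after integration by parts, produces a term bounded in modulus by $C\,\||u|\,|\nabla u|\|_{L^2}\|u\|_{\mathbb{H}^2}$, which Young's inequality splits into $\frac{1}{4}\|u\|_{\mathbb{H}^2}^2+C\,\||u|\,|\nabla u|\|_{L^2}^2$; the taming term, integrated against $-\Delta u$, generates the non-negative quantities $\int g_N(|u|^2)|\nabla u|^2\,dx+2\int g'_N(|u|^2)(u\cdot\nabla u)^2\,dx$, whose combination with the lower bound $g_N(r)\geq (r-N-1)^+$ from (\ref{equ-83}) absorbs the leftover cross term and leaves the quantity $\frac{1}{2}\||u|\,|\nabla u|\|_{L^2}^2$ on the left, as required in (\ref{equ-7}).

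For (3), the $A$-identity is immediate. Using bilinearity $B(u_1)-B(u_2)=B(u_1,w)+B(w,u_2)$ together with $\langle B(u_1,w),w\rangle_{\mathbb{H}^0}=0$, the convective difference reduces to $-\langle B(w,u_2),w\rangle_{\mathbb{H}^0}$; an integration by parts and H\"older bound this by $C\|u_2\|_{L^\infty}\|w\|_{\mathbb{H}^0}\|w\|_{\mathbb{H}^1}$, so invoking a 3D Sobolev/Agmon-type estimate $\|u_2\|_{L^\infty}^2\leq C\|u_2\|_{\mathbb{H}^1}\|u_2\|_{\mathbb{H}^2}$ and Young produces the desired coefficient $\|u_2\|_{\mathbb{H}^1}\|u_2\|_{\mathbb{H}^2}$ multiplying $\|w\|_{\mathbb{H}^0}^2$. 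The taming difference is decomposed as
\[
g_N(|u_1|^2)u_1-g_N(|u_2|^2)u_2=g_N(|u_1|^2)w+\bigl(g_N(|u_1|^2)-g_N(|u_2|^2)\bigr)u_2,
\]
and since $0\leq g'_N\leq C$ the second piece is handled by the same $L^\infty$-bound on $u_2$, while the first is non-negative when tested against $w$ and may be discarded.

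For (4) --- the principal obstacle --- one expands $\langle F(u_1)-F(u_2),w\rangle_{\mathbb{H}^1}=\langle F(u_1)-F(u_2),(I-\Delta)w\rangle_{\mathbb{H}^0}$. The Stokes contribution yields $-\|w\|_{\mathbb{H}^2}^2+\|w\|_{\mathbb{H}^1}^2$; each of the resulting convective and taming pieces is estimated by 3D Gagliardo--Nirenberg inequalities (in particular $\|v\|_{L^6}\leq C\|v\|_{\mathbb{H}^1}$ and $\|v\|_{L^\infty}\leq C\|v\|_{\mathbb{H}^1}^{1/4}\|v\|_{\mathbb{H}^2}^{3/4}$) combined with Young to absorb $\frac{3}{4}\|w\|_{\mathbb{H}^2}^2$ back into the dissipation, leaving the coefficient $\frac{1}{4}$ of (\ref{equ-55}). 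The delicate step is the cubic interaction from $g_N(|u_1|^2)u_1-g_N(|u_2|^2)u_2$ tested against $Aw$: here one exploits $g_N(r)\leq Cr$ and $|g'_N|\leq C$ to bound the integrand by sums of terms of the form $|u_i|^2|\nabla w|$ and $|u_i||\nabla u_j||w|$, applies $L^6$--$L^3$--$L^2$ H\"older splittings, and uses Young's inequality, whose balancing produces exactly the quartic factor $1+\|u_1\|_{\mathbb{H}^1}^4+\|u_2\|_{\mathbb{H}^1}^4$ together with the single $\|u_2\|_{\mathbb{H}^2}^2$ on the right. Since all these computations are the direct 3D analogues of those in Lemma 2.3 of \cite{R-Z-0-0} and the proofs of Theorem 3.1 and Lemma 5.3 of \cite{R-Z}, the argument here is essentially a bookkeeping exercise following those references.
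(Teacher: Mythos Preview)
Your proposal is correct and follows precisely the route the paper takes: the paper itself gives no proof and simply cites Lemma~2.3 of \cite{R-Z-0-0} and the proofs of Theorem~3.1 and Lemma~5.3 in \cite{R-Z}, and your sketch unpacks exactly those arguments (cancellation $\langle B(u),u\rangle_{\mathbb{H}^0}=0$, the pointwise lower bound on $g_N$ for the $\mathbb{H}^1$ dissipativity, the bilinear splitting $B(u_1)-B(u_2)=B(u_1,w)+B(w,u_2)$ with the Agmon-type bound $\|u_2\|_{L^\infty}^2\le C\|u_2\|_{\mathbb{H}^1}\|u_2\|_{\mathbb{H}^2}$, and the cubic $L^6$--$L^3$--$L^2$ H\"older/Young balancing for part~(4)). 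There is nothing to add beyond noting that your sketch is in fact more detailed than what the paper provides.
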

The main tool in the present paper is the It\^{o} formula, whose proof can be found in \cite{BHZ}.
\begin{lemma}\label{lem-1}
Assume that $E$ is a Hilbert space with norm $\|\cdot\|_E$. Let $X$ be a process given by
\[
X_t=X_0+\int^t_0 a(s)ds+\int^t_0\int_{\mathbb{Z}}f(s,z)\tilde{\eta}(ds,dz), \ t\geq 0,
\]
where $a$ is an $E-$valued progressively measurable process on the space $(\mathbb{R}^{+}\times \Omega, \mathcal{B}(\mathbb{R}^{+})\times \mathcal{F})$ such that for all $t\geq 0$, $\int^t_0 \|a(s,\omega)\|_Eds<\infty, $ $\mathbb{P}-$a.s. and $f$ is a predictable process on $\mathbb{E}$ with $\mathbb{E}\int^t_0\int_{\mathbb{Z}}\|f(s,z)\|^2_E\vartheta(dz)ds<\infty$, for each $t>0$. Denote by $G$ a separable Hilbert space. Let $\phi: E\rightarrow G$ be a function of class $C^1$ such that the first derivative $\phi': E\rightarrow \mathcal{L}(E;G)$ is $(p-1)-$H\"{o}lder continuous. Then for every $t>0$, we have $\mathbb{P}-$a.s.
\begin{eqnarray*}
\phi(X_t)&=&\phi(X_0)+\int^t_0 \phi'(X_s)(a(s))ds+\int^t_0\int_{\mathbb{Z}}[\phi'(X_{s-})f(s,z)]\tilde{\eta}(ds,dz)\\
&& \ +\int^t_0\int_{\mathbb{Z}}[\phi(X_{s-}+f(s,z))-\phi(X_{s-})-\phi'(X_{s-})f(s,z)]\eta(ds, dz).
\end{eqnarray*}
\end{lemma}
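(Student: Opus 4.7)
The plan is to establish the formula via the standard three-step scheme for jump-type It\^o calculus: truncate so that the Poisson integral has finitely many jumps on $[0,T]$; derive the formula pathwise in that case by combining the classical chain rule between jumps with a telescoping sum over the jumps; and pass to the limit using the H\"older regularity of $\phi'$ together with the $L^2$ isometry for compensated Poisson integrals.

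For the truncation, pick an increasing exhaustion $K_n\uparrow\mathbb{Z}$ with $\vartheta(K_n)<\infty$ (available since $\mathbb{Z}$ is locally compact Polish and $\vartheta$ is $\sigma$-finite) and set
\[
f_n(s,z):=f(s,z)\mathbf{1}_{K_n}(z)\mathbf{1}_{\{\|f(s,z)\|_E\le n\}},\qquad X^n_t:=X_0+\int_0^t a(s)\,ds+\int_0^t\!\!\int_{\mathbb{Z}}f_n(s,z)\,\tilde\eta(ds,dz).
\]
Since $\tilde\eta=\eta-\vartheta\otimes\lambda_T$, the compensator contribution is absolutely continuous in $t$ and is absorbed into the effective drift $\tilde a_n(s):=a(s)-\int_{\mathbb{Z}}f_n(s,z)\vartheta(dz)$; the remaining $\eta$-piece has almost surely only finitely many jumps on $[0,T]$ because $f_n$ is supported on a set of finite $\vartheta$-mass.

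For the pathwise step, enumerate the jump times of $X^n$ as $0<\tau_1<\tau_2<\cdots$. On each interval $[\tau_k,\tau_{k+1})$, $s\mapsto X^n_s$ is absolutely continuous with derivative $\tilde a_n(s)$, so the classical chain rule yields $\phi(X^n_{\tau_{k+1}-})-\phi(X^n_{\tau_k})=\int_{\tau_k}^{\tau_{k+1}}\phi'(X^n_s)(\tilde a_n(s))\,ds$, while at each jump time $\phi(X^n_{\tau_k})-\phi(X^n_{\tau_k-})$ decomposes as $\phi'(X^n_{\tau_k-})\Delta X^n_{\tau_k}$ plus a Taylor remainder. Telescoping the increments and recombining the compensator piece inside $\tilde a_n$ with the linear jump sum to form the compensated stochastic integral produces the stated identity for $X^n$ with remainder $R_n(s,z):=\phi(X^n_{s-}+f_n(s,z))-\phi(X^n_{s-})-\phi'(X^n_{s-})f_n(s,z)$.

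The main obstacle is the limit $n\to\infty$. The compensated-integral isometry gives $X^n\to X$ in $L^2(\Omega;\mathcal{D}([0,T];E))$ and hence pathwise along a subsequence. The $(p-1)$-H\"older continuity of $\phi'$ yields $\|R_n(s,z)\|_G\le C\|f_n(s,z)\|_E^p$, and the delicate issue is to produce a majorant integrable against $\eta$ uniformly in $n$ so that dominated convergence applies to $\int_0^t\!\int_{\mathbb{Z}} R_n\,d\eta$. In the natural range $p\in(1,2]$, splitting according to $\|f\|_E\le 1$ or $>1$ bounds $\|f_n\|_E^p$ by $C(1+\|f\|_E^2)$, which together with the hypothesis $\mathbb{E}\int_0^T\!\int_{\mathbb{Z}}\|f\|_E^2\vartheta(dz)\,ds<\infty$ and a stopping-time localisation handling the local boundedness of $\phi'$ supplies the required majorant. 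Continuity of $\phi'$ together with the isometry then handles the convergence of $\int_0^t\!\int_{\mathbb{Z}}\phi'(X^n_{s-})f_n\,d\tilde\eta$, and continuity together with the $L^1$ bound on $a$ handles the drift term, yielding the formula in the limit.
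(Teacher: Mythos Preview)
The paper does not supply its own proof of this lemma: immediately before the statement it writes ``The main tool in the present paper is the It\^{o} formula, whose proof can be found in \cite{BHZ}.'' So there is no in-paper argument to compare against; the lemma is quoted from Brze\'{z}niak--Hausenblas--Zhu.

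Your outline is the standard route and matches in spirit what one finds in that reference: truncate to a finite-jump setting, argue pathwise via the chain rule between jumps plus a telescoping sum across jumps, then remove the truncation using the $(p-1)$-H\"older bound on the Taylor remainder together with the $L^2$ isometry. Two small points worth tightening if you were to write this out in full: first, the remainder bound $\|R_n(s,z)\|_G\le C\|f_n(s,z)\|_E^{\,p}$ holds with a \emph{uniform} constant only after localising $X^n$ to a bounded set (since $\phi'$ is only locally bounded unless $\phi'$ is globally H\"older with a fixed constant), so the stopping-time localisation you mention is essential and should precede the dominated-convergence step rather than be tacked on; second, passing to a subsequence for the a.s.\ convergence of $X^n\to X$ is harmless here because the identity you are proving involves only the limit process $X$, but you should say explicitly that the right-hand side is a well-defined functional of $X$ and does not depend on the approximating sequence. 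With those caveats, the sketch is sound.
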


\section{Existence and uniqueness }
In this part, we aim to prove the following result.
\begin{thm}\label{thm-1}
Assume Hypothesis H0 holds and the initial value $u_0\in L^2(\Omega, \mathcal{F}_0; \mathbb{H}^1)$. Then there exists a unique strong solution of (\ref{equ-4}) in the sense of Definition \ref{def-1}. Moreover, there exists a positive constant $C$ such that
\begin{eqnarray}\label{equ-35}
\mathbb{E}\Big(\sup_{0\leq t\leq T}\|u(t)\|^2_{\mathbb{H}^1}+\int^T_0 \|u(t)\|^2_{\mathbb{H}^2}dt\Big)\leq C(1+\mathbb{E}\|u_0\|^2_{\mathbb{H}^1}).
\end{eqnarray}
\end{thm}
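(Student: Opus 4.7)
I would construct a solution via Galerkin approximation, extract a weak limit through uniform a priori bounds, and identify it as a solution using the local monotonicity estimate (\ref{equ-18}). Uniqueness follows from (\ref{equ-18}) combined with a stopping-time Gronwall argument.

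Let $P_n$ be the orthogonal projection of $\mathbb{H}^0$ onto $\mathbb{H}_n:=\mathrm{span}\{e_1,\ldots,e_n\}$, and consider the finite dimensional SDE
\[
u_n(t) = P_n u_0 + \int_0^t P_n F(u_n(s))\, ds + \int_0^t \int_{\mathbb{Z}} P_n \sigma(s-, u_n(s-), z)\, \tilde\eta(ds,dz).
\]
Since $P_n F$ is locally Lipschitz on the finite-dimensional $\mathbb{H}_n$ and the noise is Lipschitz by (\ref{equ-6}), a local solution $u_n$ exists by standard SDE theory. Applying It\^o's formula (Lemma \ref{lem-1}) to $\|u_n\|^2_{\mathbb{H}^1}$ and using the coercivity bound (\ref{equ-7}), the growth bound (\ref{equ-5}), the Burkholder--Davis--Gundy inequality for Poisson stochastic integrals (whose closure in $\mathbb{E}\sup_t\|u_n\|^2_{\mathbb{H}^1}$ relies on the higher-moment bound (\ref{equ-5-11})), and Gronwall's lemma, I would obtain the uniform bound
\[
\sup_n \mathbb{E}\Big(\sup_{t\in[0,T]}\|u_n(t)\|^2_{\mathbb{H}^1}+\int_0^T \|u_n(s)\|^2_{\mathbb{H}^2}\,ds\Big)\leq C\bigl(1+\mathbb{E}\|u_0\|^2_{\mathbb{H}^1}\bigr),
\]
from which global existence of $u_n$ follows and (\ref{equ-35}) is inherited by the limit.

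\textbf{Extraction and identification.} By Banach--Alaoglu, along a subsequence $u_n\rightharpoonup u$ weakly in $L^2(\Omega\times[0,T];\mathbb{H}^2)$ and weak-star in $L^2(\Omega;L^\infty([0,T];\mathbb{H}^1))$; moreover $F(u_n)\rightharpoonup \Phi$ in $L^2(\Omega\times[0,T];\mathbb{H}^0)$ and $\sigma(\cdot,u_n,\cdot)\rightharpoonup \Sigma$ in $L^2(\Omega\times[0,T]\times\mathbb{Z};\mathbb{H}^0)$, the integrability of $B(u_n)$ and of $\mathcal{P}g_N(|u_n|^2)u_n$ being controlled by interpolation $\|B(u_n)\|_{\mathbb{H}^0}\le C\|u_n\|_{\mathbb{H}^1}^{1/2}\|u_n\|_{\mathbb{H}^2}^{3/2}$ together with the a priori bound. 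Passing to the limit in the Galerkin equation yields that $u$ satisfies (\ref{equ-9}) with $F$ replaced by $\Phi$ and $\sigma$ replaced by $\Sigma$. To identify these, fix any progressively measurable $\mathbb{H}^2$-valued test process $v$ with $\int_0^T(1+\|v\|_{\mathbb{H}^1}\|v\|_{\mathbb{H}^2})ds<\infty$ a.s.\ and set $r(t):=2C_0\int_0^t(1+\|v(s)\|_{\mathbb{H}^1}\|v(s)\|_{\mathbb{H}^2})ds$. Applying Lemma \ref{lem-1} to $e^{-r(t)}\|u_n(t)-v(t)\|^2_{\mathbb{H}^0}$ on the Galerkin side and to $e^{-r(t)}\|u(t)-v(t)\|^2_{\mathbb{H}^0}$ on the limit side, invoking the local monotonicity (\ref{equ-18}) together with weak lower semicontinuity in the Galerkin inequality and subtracting, I would obtain
\[
\mathbb{E}\int_0^T e^{-r(s)}\Bigl(2\langle \Phi(s)-F(v(s)),u(s)-v(s)\rangle_{\mathbb{H}^0}+\int_{\mathbb{Z}}\|\Sigma(s,z)-\sigma(s,v(s),z)\|^2_{\mathbb{H}^0}\vartheta(dz)\Bigr)ds\le 0.
\]
Choosing $v=u-\epsilon w$ for arbitrary admissible $w$ and sending $\epsilon\downarrow 0$ (Minty--Browder) forces $\Phi=F(u)$, and the residual inequality then forces $\Sigma=\sigma(\cdot,u,\cdot)$.

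\textbf{Uniqueness and main obstacle.} If $u^{(1)},u^{(2)}$ are two solutions, It\^o's formula applied to $\|u^{(1)}-u^{(2)}\|^2_{\mathbb{H}^0}$ combined with (\ref{equ-18}) (for the drift) and (\ref{equ-6-1}) (for the Poisson It\^o corrector), stopped at $\tau_R:=\inf\{t:\int_0^t\|u^{(2)}(s)\|_{\mathbb{H}^1}\|u^{(2)}(s)\|_{\mathbb{H}^2}ds\ge R\}$, gives pathwise uniqueness on $[0,\tau_R]$ by Gronwall's lemma; $\tau_R\uparrow T$ a.s.\ by (\ref{equ-35}). The main obstacle is the identification step: the jump noise produces an It\^o correction of the form $\int_0^t\int_{\mathbb{Z}}\|\sigma(s,u_n,z)-\sigma(s,v,z)\|^2_{\mathbb{H}^0}\vartheta(dz)ds$ which must be carried through the Galerkin limit using only weak convergence of $\sigma(\cdot,u_n,\cdot)$ to $\Sigma$, so one has to argue simultaneously with the nonlinear drift via a \emph{single} exponential weight and invoke weak lower semicontinuity of both the drift pairing and the quadratic integrand in a coordinated way; the higher moment hypothesis (\ref{equ-5-11}) is essential here because the tamed cubic term $g_N(|u|^2)u$ together with supremum-type BDG estimates in $\mathbb{H}^1$ requires control beyond second moments of the noise.
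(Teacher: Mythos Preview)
Your overall architecture---Galerkin approximation, uniform a priori bounds, weak extraction, Minty--Browder identification via the exponential weight built from (\ref{equ-18}), and uniqueness by the same monotonicity---is the paper's. But there is a real gap in your a priori estimates. Weak compactness of $F(u_n)$ in $L^2(\Omega\times[0,T];\mathbb{H}^0)$ does not follow from the second-moment bound alone: since $\|g_N(|u|^2)u\|_{\mathbb{H}^0}\le C(1+\|u\|_{\mathbb{H}^1}^3)$ and (with the correct exponents, not the ones you wrote) $\|B(u)\|_{\mathbb{H}^0}\le C\|u\|_{\mathbb{H}^1}^{3/2}\|u\|_{\mathbb{H}^2}^{1/2}$, you need $\sup_n\mathbb{E}\sup_{t}\|u_n(t)\|_{\mathbb{H}^1}^6<\infty$. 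The paper obtains this as a separate estimate (its (\ref{equ-13})) by applying It\^o's formula to $\|u_n\|_{\mathbb{H}^1}^6$; hypothesis (\ref{equ-5-11}) enters exactly here, through the Taylor remainder of the jump part, not in the BDG step for $\mathbb{E}\sup_t\|u_n\|_{\mathbb{H}^1}^2$ as you claim. Without this sixth-moment bound the weak limit $\Phi$ of $F(u_n)$ is not available, and the identification argument cannot be run.

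This omission has a structural consequence you also miss. The sixth-moment estimate requires $u_0\in L^6(\Omega;\mathbb{H}^1)$, whereas the theorem assumes only $u_0\in L^2(\Omega;\mathbb{H}^1)$. The paper therefore proceeds in two steps: first carry out the full Galerkin/monotonicity construction under the auxiliary assumption $u_0\in L^6$; then, for general $u_0\in L^2$, approximate by $Y_{n,0}\in L^6$, use the solutions $Y_n$ just built, and show they are Cauchy in probability in $L^\infty([0,T];\mathbb{H}^0)$ via a stopping time $\tau_R^n=\inf\{t:\|Y_n(t)\|_{\mathbb{H}^1}>R\}$ combined with the uniform bound (\ref{equ-70}) and an $\mathbb{H}^0$-stability estimate. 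Your proposal works directly with $L^2$ initial data and never addresses this, so as written it does not cover the stated hypothesis.
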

Motivated by \cite{R-Z}, the proof process of Theorem \ref{thm-1} is based on Galerkin's approximation and a kind of local monotonicity of the 3D tamed Navier-Stokes equation.
\begin{proof}
\textbf{Step 1: } Assume $u_0\in L^6(\Omega, \mathcal{F}_0; \mathbb{H}^1)$.\\
Recall $\{e_i\}^{\infty}_{i=1}\subset \mathbb{H}^2$ be an orthonormal basis in $\mathbb{H}^0$ composed of eigenvectors of  $A$ such that $span\{e_i, i\geq 1\}$ is dense in $\mathbb{H}^1$. Moreover, it is easy to see that $\{e_i\}^{\infty}_{i=1}$ is also orthogonal in $\mathbb{H}^1$. Denote by $\Pi_n$ the orthogonal projection from $\mathbb{H}^0$ onto the finite dimensional space $\mathbb{H}_n:=span\{e_1,e_2, \cdot\cdot\cdot, e_n\}$:
\[
\Pi_n v:=\sum^n_{i=1}\langle v, e_i\rangle_{\mathbb{H}^0}e_i.
\]
Then $\Pi_n$ is also the orthogonal projection from $\mathbb{H}^1$ onto $\mathbb{H}_n$. Now, consider the following finite dimensional stochastic differential equation in $\mathbb{H}_n$
\begin{eqnarray}\label{equ-10}
\left\{
  \begin{array}{ll}
    du_n(t)=[\Pi_n F(u_n(t))]dt+\int_{\mathbb{Z}} {\sigma}_n(t-,u_n(t-),z)\tilde{\eta}(dt,dz), &  \\
    u_n(0)=\Pi_nu_0, &
  \end{array}
\right.
\end{eqnarray}
where $\sigma_n:=\Pi_n \sigma$. Taking into account (\ref{equ-5-1}) and (\ref{equ-6-1}), we know that $\sigma_n$ is globally Lipschitz. Moreover, for $u\in \mathbb{H}_n$, we deduce from (\ref{equ-11}) that
\begin{eqnarray*}
\langle \Pi_n F(u), u\rangle_{\mathbb{H}^0}\leq \langle F(u), u\rangle_{\mathbb{H}^0}\leq C_N \|u\|^2_{\mathbb{H}^0},
\end{eqnarray*}
and
\begin{eqnarray*}
\| \Pi_n F(u)-\Pi_n F(v)\|_{\mathbb{H}^0}\leq  C_N \|u-v\|^2_{\mathbb{H}^0},
\end{eqnarray*}
which implies that $F_n(u)=\Pi_n F(u)$ is of linear growth and locally Lipschitz in $\mathbb{H}^0$. Based on the above and (\ref{equ-6-1}), it follows from \cite{A-B-W} that (\ref{equ-10}) admits a unique c\`{a}dl\`{a}g local strong solution $u_n$ in $\mathbb{H}_n$. Then, by the skew symmetric of the nonlinear term $B$, the local solution can be extended to any time interval $[0,T]$, $T>0$.

In the following, we aim to prove
\begin{eqnarray}\label{equ-12}
\sup_n \mathbb{E}\Big(\sup_{t\in [0,T]}\|u_n(t)\|^{2}_{\mathbb{H}^1}+\int^T_0\|u_n(t)\|^2_{\mathbb{H}^2}dt\Big)&\leq& C(1+\mathbb{E}\|u_0\|^2_{\mathbb{H}^1}),\\
\label{equ-13}
\sup_n \mathbb{E}\Big(\sup_{t\in [0,T]}\|u_n(t)\|^{6}_{\mathbb{H}^1}+\int^T_0\|u_n(t)\|^{4}_{\mathbb{H}^1}\|u_n(t)\|^2_{\mathbb{H}^2}dt\Big)&\leq&  C(1+\mathbb{E}\|u_0\|^6_{\mathbb{H}^1}).
\end{eqnarray}
Applying It\^{o} formula (Lemma \ref{lem-1}) to the function $\varphi(x)=|x|^2$ and by $\varphi(x+y)-\varphi(x)-\langle y, \nabla \varphi(x)\rangle=\varphi(y)=|y|^2$, we deduce that for $0\leq t\leq T$,
\begin{eqnarray*}
\|u_n(t)\|^{2}_{\mathbb{H}^1}&=&\|\Pi_n u_0\|^{2}_{\mathbb{H}^1}+2\int^t_0\langle u_n(s), F_n(u_n(s))\rangle_{\mathbb{H}^1}ds+2\int^t_0\int_{\mathbb{Z}}\langle u_n(s-), {\sigma}_n(s-,u_n(s-),z)\rangle_{\mathbb{H}^1}\tilde{\eta}(ds, dz)\\
&&\ +\int^t_0\int_{\mathbb{Z}}\|{\sigma}_n(s-,u_n(s-),z)\|^2_{\mathbb{H}^1}\eta(ds, dz).
\end{eqnarray*}
Define a stopping time
\[
\tau^n_R:=\inf\{t\in [0,T]: \|u_n(t)\|^{2}_{\mathbb{H}^1}\geq R\}.
\]
With the aid of  (\ref{equ-7}), we deduce that
\begin{eqnarray*}
&&\sup_{0\leq s\leq t\wedge\tau^n_R }\|u_n(s)\|^{2}_{\mathbb{H}^1}+\int^{t\wedge\tau^n_R }_0\|u_n(s)\|^2_{\mathbb{H}^2}ds+\int^{t\wedge\tau^n_R }_0\||u_n|\cdot |\nabla u_n|\|^2_{L^2}ds\\
&\leq& \|\Pi_n u_0\|^{2}_{\mathbb{H}^1}+2C_N\int^{t\wedge\tau^n_R }_0 \|u_n(s)\|^2_{\mathbb{H}^1}ds+2\sup_{0\leq s\leq t\wedge\tau^n_R }\int^s_0\int_{\mathbb{Z}}\langle u_n(r-), {\sigma}_n(r-,u_n(r-),z)\rangle_{\mathbb{H}^1}\tilde{\eta}(dr,dz)\\
&&\ +\sup_{0\leq s\leq t\wedge\tau^n_R }\int^s_0\int_{\mathbb{Z}}\|{\sigma}_n(r-,u_n(r-),z)\|^2_{\mathbb{H}^1}\eta(dr,dz).
\end{eqnarray*}
Applying the Burkholder-Davis-Gundy inequality (see \cite{I}), (\ref{equ-5}), the H\"{o}lder inequality and the Young inequality, we get
\begin{eqnarray*}
&&\mathbb{E}\Big|2\sup_{0\leq s\leq t\wedge\tau^n_R }\int^s_0\int_{\mathbb{Z}}\langle u_n(r-), {\sigma}_n(r-,u_n(r-),z)\rangle_{\mathbb{H}^1}\tilde{\eta}(dr, dz)\Big|\\
&\leq& C\mathbb{E}\Big[\int^{t\wedge\tau^n_R }_0\int_{\mathbb{Z}}\|u_n(s)\|^{2}_{\mathbb{H}^1}\|{\sigma}_n(s,u_n(s),z)\|^2_{\mathbb{H}^1}\vartheta(dz)ds\Big]^{\frac{1}{2}}\\
&\leq & C\Big[\mathbb{E}\sup_{0\leq s\leq t\wedge\tau^n_R }\|u_n(s)\|^2_{\mathbb{H}^1} \Big]^{\frac{1}{2}}\Big[L_1\mathbb{E}\int^{t\wedge\tau^n_R }_0(1+\|u_n(s)\|^{2}_{\mathbb{H}^1})ds\Big]^{\frac{1}{2}}\\
&\leq & \frac{1}{2}\mathbb{E}\sup_{0\leq s\leq t\wedge\tau^n_R }\|u_n(s)\|^2_{\mathbb{H}^1}+CL_1\mathbb{E}\int^{t\wedge\tau^n_R }_0(1+\|u_n(s)\|^{2}_{\mathbb{H}^1})ds\\
&\leq &\frac{1}{2}\mathbb{E}\sup_{0\leq s\leq t\wedge\tau^n_R }\|u_n(s)\|^2_{\mathbb{H}^1}+CL_1 T+CL_1\mathbb{E}\int^{t }_0\sup_{0\leq r\leq s\wedge\tau^n_R}\|u_n(r)\|^{2}_{\mathbb{H}^1}ds.
\end{eqnarray*}
Taking into account that the process
\[
t\mapsto \int^t_0\int_{\mathbb{Z}}\|{\sigma}_n(s-,u_n(s-),z)\|^2_{\mathbb{H}^1}\eta(ds, dz)
\]
has only positive jumps and by (\ref{equ-5}), we deduce that
\begin{eqnarray*}
&&\mathbb{E}\sup_{0\leq s\leq t\wedge\tau^n_R }\int^s_0\int_{\mathbb{Z}}\|{\sigma}_n(r-,u_n(r-),z)\|^2_{\mathbb{H}^1}\eta(dr, dz)\\
&\leq & \mathbb{E}\int^{t\wedge\tau^n_R }_0\int_{\mathbb{Z}}\|{\sigma}_n(r-,u_n(r-),z)\|^2_{\mathbb{H}^1}\eta(dr, dz)\\
&=& \mathbb{E}\int^{t\wedge\tau^n_R }_0\int_{\mathbb{Z}}\|{\sigma}_n(s,u_n(s),z)\|^2_{\mathbb{H}^1}\vartheta(dz)ds\\
&\leq& L_1\mathbb{E}\int^{t\wedge\tau^n_R }_0(1+\|u_n(s)\|^{2}_{\mathbb{H}^1})ds\\
&\leq& L_1T+ L_1\int^{t }_0\mathbb{E}\sup_{0\leq r\leq s\wedge\tau^n_R}\|u_n(r)\|^2_{\mathbb{H}^1}ds.
\end{eqnarray*}
Collecting the above estimates, we arrive at
\begin{eqnarray*}
&&\mathbb{E}\sup_{0\leq s\leq t\wedge\tau^n_R }\|u_n(s)\|^{2}_{\mathbb{H}^1}+\mathbb{E}\int^{t\wedge\tau^n_R }_0\|u_n(s)\|^2_{\mathbb{H}^2}ds\\
&\leq& \mathbb{E}\|\Pi_n u_0\|^{2}_{\mathbb{H}^1}+CL_1T+L_1 T+(2C_N+CL_1+L_1)\mathbb{E}\int^{t }_0\sup_{0\leq r\leq s\wedge\tau^n_R}\|u_n(r)\|^{2}_{\mathbb{H}^1}ds.
\end{eqnarray*}
By Gronwall inequality, it gives that
\begin{eqnarray*}
&&\mathbb{E}\sup_{0\leq s\leq t\wedge\tau^n_R }\|u_n(s)\|^{2}_{\mathbb{H}^1}+\mathbb{E}\int^{t\wedge\tau^n_R }_0\|u_n(s)\|^2_{\mathbb{H}^2}ds\\
&\leq&  \Big(\mathbb{E}\|\Pi_n u_0\|^{2}_{\mathbb{H}^1}+CL_1T+L_1 T\Big)\exp\Big\{(2C_N+CL_1+L_1)T\Big\}\\
&\leq& C(1+\mathbb{E}\| u_0\|^{2}_{\mathbb{H}^1}),
\end{eqnarray*}
where the positive constant $C=C(C_N,L_1,T)$ independent of $n$.

Since the process $u_n(t)$, $t\in [0,T]$ is adapted and c\`{a}dl\`{a}g, we see that $\lim_{R\rightarrow \infty}\mathbb{P}\{\tau^n_R<T\}=0$. Based on the Fatou's lemma, we conclude that (\ref{equ-12}) holds.

Applying the finite dimensional It\^{o} formula to the function $\|u_n(t)\|^6_{\mathbb{H}^1}$, it yields
\begin{eqnarray}\notag
&&\|u_n(t)\|^6_{\mathbb{H}^1}\\ \notag
&=&\|\Pi_n u_0\|^6_{\mathbb{H}^1}+6\int^t_0\|u_n(s)\|^4_{\mathbb{H}^1}\langle u_n(s), F_n(u_n(s)) \rangle_{\mathbb{H}^1} ds\\ \notag
&&\ +6\int^t_0\int_{\mathbb{Z}}\|u_n(s)\|^4_{\mathbb{H}^1}\langle u_n(s), {\sigma}_n(s-,u_n(s-),z) \rangle_{\mathbb{H}^1} \tilde{\eta}(ds,dz)\\ \notag
&&\ + \int^t_0\int_{\mathbb{Z}}(\|u_n(s-)+{\sigma}_n(s-,u_n(s-),z)\|^{6}_{\mathbb{H}^1}-\|u_n(s-)\|^{6}_{\mathbb{H}^1}-6\|u_n(s-)\|^{4}_{\mathbb{H}^1}\langle u_n(s),{\sigma}_n(s-,u_n(s-),z)\rangle_{\mathbb{H}^1})\eta(ds,dz)\\
\label{equ-15}
&=:&\|\Pi_n u_0\|^6_{\mathbb{H}^1}+I^1_n(t)+I^2_n(t)+I^3_n(t).
\end{eqnarray}
By (\ref{equ-7}), we deduce that
\begin{eqnarray*}
\mathbb{E}\sup_{0\leq s\leq t\wedge \tau^n_R}I^1_n(s)&\leq& -3\int^{t\wedge \tau^n_R}_0\|u_n(s)\|^4_{\mathbb{H}^1}\|u_n(s)\|^2_{\mathbb{H}^2}ds-3\int^{t\wedge \tau^n_R}_0\|u_n(s)\|^4_{\mathbb{H}^1}\||u_n(s)|\cdot|\nabla u_n(s)|\|^2_{L^2}ds\\
&&\ +C_N\int^{t\wedge \tau^n_R}_0\|u_n(s)\|^6_{\mathbb{H}^1}ds.
\end{eqnarray*}
Applying the Burkholder-Davis-Gundy inequality, (\ref{equ-5}), the H\"{o}lder inequality and the Young inequality, it follows that
\begin{eqnarray*}
\mathbb{E}\Big[\sup_{0\leq s\leq t\wedge \tau^n_R}I^2_n(s)\Big]
&\leq& 6C\mathbb{E}\Big[\int^{t\wedge \tau^n_R}_0\int_{\mathbb{Z}}\|u_n(s)\|^8_{\mathbb{H}^1}\|u_n(s)\|^2_{\mathbb{H}^1}\|{\sigma}_n(s,u_n(s),z)\|^2_{\mathbb{H}^1}\vartheta(dz)ds\Big]^{\frac{1}{2}}\\
&\leq& 6C\mathbb{E}\Big[\int^{t\wedge \tau^n_R}_0\|u_n(s)\|^{10}_{\mathbb{H}^1}L_1(1+\|u_n(s)\|^{2}_{\mathbb{H}^1})ds\Big]^{\frac{1}{2}}\\
&\leq& 6C\mathbb{E}\Big[\sup_{0\leq s\leq t\wedge \tau^n_R}\|u_n(s)\|^{3}_{\mathbb{H}^1}\Big(L_1\int^{t\wedge \tau^n_R}_0(1+\|u_n(s)\|^{6}_{\mathbb{H}^1})ds\Big)^{\frac{1}{2}}\Big]\\
&\leq& \frac{1}{2}\mathbb{E}\sup_{0\leq s\leq t\wedge \tau^n_R}\|u_n(s)\|^{6}_{\mathbb{H}^1}+CL_1+CL_1\mathbb{E}\int^{t\wedge \tau^n_R}_0\|u_n(s)\|^{6}_{\mathbb{H}^1}ds.
\end{eqnarray*}
By the Taylor formula, we have
\begin{eqnarray}\label{equ-14}
\Big||x+h|^{2p}-|x|^{2p}-2p|x|^{2(p-1)}(x,h)\Big|\leq C_p(|x|^{2(p-1)}|h|^2+|h|^{2p}),
\end{eqnarray}
where $C_p$ is a finite positive constant.

With the help of (\ref{equ-14}), (\ref{equ-5}) and (\ref{equ-5-11}), we deduce that
\begin{eqnarray*}
&&\mathbb{E}\Big[\sup_{0\leq s\leq t\wedge \tau^n_R}I^3_n(s)\Big]\\
&\leq& \mathbb{E}\int^{t\wedge \tau^n_R}_0\int_{\mathbb{Z}}\Big|\|u_n(s-)+{\sigma}_n(s-,u_n(s-),z)\|^{6}_{\mathbb{H}^1}-\|u_n(s-)\|^{6}_{\mathbb{H}^1}-6\|u_n(s-)\|^{4}_{\mathbb{H}^1}\langle u_n(s),{\sigma}_n(s-,u_n(s-),z)\rangle_{\mathbb{H}^1}\Big|\eta(ds,dz)\\
&\leq& C_3\mathbb{E}\int^{t\wedge \tau^n_R}_0\int_{\mathbb{Z}}\Big(\|u_n(s)\|^{4}_{\mathbb{H}^1}\|{\sigma}_n(s-,u_n(s-),z)\|^{2}_{\mathbb{H}^1}+\|{\sigma}_n(s,u_n(s),z)\|^{6}_{\mathbb{H}^1}\Big)\vartheta(dz)ds\\
&\leq& C_3(L_1+L_2)\mathbb{E}\int^{t\wedge \tau^n_R}_0(1+\|u_n(s)\|^{6}_{\mathbb{H}^1})ds\\
&\leq&  C_3(L_1+L_2)T+C_3(L_1+L_2)\mathbb{E}\int^{t}_0\sup_{0\leq r\leq s\wedge \tau^n_R}\|u_n(r)\|^{6}_{\mathbb{H}^1}ds.
\end{eqnarray*}
By (\ref{equ-15}), it follows that
\begin{eqnarray*}
&&\mathbb{E}\sup_{0\leq s\leq t\wedge \tau^n_R}\|u_n(s)\|^6_{\mathbb{H}^1}+6\mathbb{E}\int^{t\wedge \tau^n_R}_0\|u_n(s)\|^4_{\mathbb{H}^1}\|u_n(s)\|^2_{\mathbb{H}^2}ds+6\mathbb{E}\int^{t\wedge \tau^n_R}_0\|u_n(s)\|^4_{\mathbb{H}^1}\||u_n(s)|\cdot|\nabla u_n(s)|\|^2_{L^2}ds\\
&\leq&\mathbb{E}\|\Pi_n u_0\|^6_{\mathbb{H}^1}+CL_1+C_3(L_1+L_2)T +(C_N+CL_1+C_3L_1+C_3L_2)\int^{t}_0\mathbb{E}\sup_{0\leq r\leq s\wedge \tau^n_R}\|u_n(r)\|^6_{\mathbb{H}^1}ds.\\
\end{eqnarray*}
Using the Gronwall inequality, we get
\begin{eqnarray*}
&&\mathbb{E}\sup_{0\leq s\leq t\wedge \tau^n_R}\|u_n(s)\|^6_{\mathbb{H}^1}+\mathbb{E}\int^{t\wedge \tau^n_R}_0\|u_n(s)\|^4_{\mathbb{H}^1}\|u_n(s)\|^2_{\mathbb{H}^2}ds\\
&\leq& C(1+\mathbb{E}\|\Pi_n u_0\|^6_{\mathbb{H}^1}),
\end{eqnarray*}
where $C=C(C_N, L_1,L_2,T)$ independent of $n$.
Recall $\tau^n_R\uparrow T$ as $R\rightarrow \infty$, $\mathbb{P}-$a.s. and $\mathbb{P}\{\tau^n_R<T\}=0$. By Fatou's lemma, it gives the equation (\ref{equ-13}).

Based on (\ref{equ-12})-(\ref{equ-13}) and referring to (3.14) in \cite{R-Z}, we have
\begin{eqnarray}\notag
&&\sup_n\int^T_0\mathbb{E}[\|F_n(u_n(t))\|^2_{\mathbb{H}^0}]dt\\ \notag
&\leq& C\sup_n \int^T_0\mathbb{E}(\|u_n(t)\|^6_{\mathbb{H}^1}+\|u_n(t)\|^2_{\mathbb{H}^2})dt\\
\label{equ-16}
&\leq& CT\sup_n \mathbb{E}\sup_{t\in [0,T]}\|u_n(t)\|^6_{\mathbb{H}^1}+C\sup_n\mathbb{E}\int^T_0 \|u_n(t)\|^2_{\mathbb{H}^2}dt<\infty.
\end{eqnarray}
Moreover, utilizing (\ref{equ-5}), it follows that
\begin{eqnarray}\notag
&&\sup_n\mathbb{E}\int^T_0\int_{\mathbb{Z}}\|\sigma_n(t,u_n(t),z)\|^2_{\mathbb{H}^1}\vartheta(dz)dt\\ \notag
&\leq& L_1\sup_n\mathbb{E}\int^T_0(1+\|u_n(t)\|^2_{\mathbb{H}^1})dt\\ \label{equ-17}
&\leq& L_1T+L_1T\sup_n\mathbb{E}\sup_{t\in [0,T]}\|u_n(t)\|^2_{\mathbb{H}^1}<\infty.
\end{eqnarray}
With the aid of (\ref{equ-12})-(\ref{equ-13}) and (\ref{equ-16})-(\ref{equ-17}), we deduce that there exist a sequence of processes still denoted by $\{u_n; n\geq 1\}$ and elements
\begin{eqnarray*}
\bar{u}&\in& L^2(\Omega\times [0,T]; \mathbb{H}^2) \cap L^6(\Omega; L^{\infty}([0,T];\mathbb{H}^1)),\\
G&\in& L^2(\Omega\times [0,T]; \mathbb{H}^0), \quad S\in L^2(\Omega\times [0,T];L^2(\mathbb{Z},\vartheta; \mathbb{H}^1)),
\end{eqnarray*}
such that
 %\begin{eqnarray*}
\begin{description}
  \item[(i)] $u_n\rightarrow \bar{u}$ \ weakly\ in $L^2(\Omega\times [0,T]; \mathbb{H}^2)$,\ hence\ weakly\ in \ $ L^2(\Omega\times [0,T]; \mathbb{H}^1)$ and $ L^2(\Omega\times [0,T]; \mathbb{H}^0)$,
  \item[(ii)] $u_n\rightarrow \bar{u}$ \ in\ $L^2(\Omega; L^{\infty}([0,T];\mathbb{H}^1))$ \ with\ respect\ to\ the\ weak\ star\ topology,
  \item[(iii)] $ F_n(u_n)\rightarrow G$ weakly in $L^2(\Omega\times [0,T]; \mathbb{H}^0)$,
  \item[(iv)] $\sigma_n(t,u_n(t), z)\rightarrow S$ weakly in $L^2(\Omega\times [0,T];L^2(\mathbb{Z},\vartheta; \mathbb{H}^1) )$,
  \item[(v)] $u_n\rightarrow \bar{u}$\ weakly\ in \ $L^6(\Omega\times [0,T]; \mathbb{H}^1)$.
\end{description}
%\end{eqnarray*}

In the following, we devote to proving that there exists a solution to (\ref{equ-4}).
%Set
%\begin{eqnarray*}
%\bar{F}(\bar{u}(s),s):=-A\bar{u}(s)-G(s)-S(s),
%\end{eqnarray*}
%then, by (i)-(v), we deduce that
%\begin{eqnarray*}
%F(u_n(s),s)\rightarrow\bar{F}(\bar{u}(s),s) \ weakly\ in\ L^2(\Omega\times [0,T]; H^0) .
%\end{eqnarray*}
Define a process
\begin{eqnarray*}
u(t):=u_0+\int^t_0G(s)ds+\int^t_0\int_{\mathbb{Z}}S(s,z)\tilde{\eta}(ds,dz),\ \ t\in [0,T] \ \ {\rm{in}} \ \ \mathbb{H}^0.
\end{eqnarray*}
We can show that
$u=\bar{u}$\ \ $ dt\otimes \mathbb{P}-$a.s. Indeed, let us fix a function $\varphi\in L^2(\Omega\times [0,T]; \mathbb{R})$, by (i)-(iv), it yields that
\begin{eqnarray*}
\mathbb{E}\int^T_0 \langle \bar{u}(t), \varphi(t)e_i\rangle_{\mathbb{H}^0} dt&=&\lim_{n\rightarrow \infty}\mathbb{E}\int^T_0 \langle u_n(t), \varphi(t)e_i\rangle_{\mathbb{H}^0} dt\\
&=& \lim_{n\rightarrow \infty}\mathbb{E}\int^T_0 \langle u_n(0), \varphi(t)e_i\rangle_{\mathbb{H}^0} dt+\lim_{n\rightarrow \infty}\mathbb{E}\int^T_0\int^t_0 \langle  F_n(u_n(s)), \varphi(t)e_i\rangle_{\mathbb{H}^0} dsdt\\
&&\ +\lim_{n\rightarrow \infty}\mathbb{E}\int^T_0\langle \int^t_0\int_{\mathbb{Z}}\sigma_n(s-,u_n(s-),z)\tilde{\eta}(ds, dz), \varphi(t)e_i\rangle_{\mathbb{H}^0} dt\\
&=& \lim_{n\rightarrow \infty}\mathbb{E} \langle u_n(0),e_i\rangle_{\mathbb{H}^0} \int^T_0\varphi(t) dt+\lim_{n\rightarrow \infty}\mathbb{E}\int^T_0 \langle  F_n(u_n(s)), \int^T_s\varphi(t)dte_i\rangle_{\mathbb{H}^0} ds\\
&&\ +\lim_{n\rightarrow \infty}\mathbb{E}\int^T_0\langle \int^t_0\int_{\mathbb{Z}}\sigma_n(s-,u_n(s-),z)\tilde{\eta}(ds, dz), \varphi(t)e_i\rangle_{\mathbb{H}^0} dt\\
&=& \mathbb{E} \langle u_0,e_i\rangle_{\mathbb{H}^0} \int^T_0\varphi(t) dt+\mathbb{E}\int^T_0 \langle  G(s), \int^T_s\varphi(t)dte_i\rangle_{\mathbb{H}^0} ds\\
&&\ +\mathbb{E}\int^T_0\langle \int^t_0\int_{\mathbb{Z}}S(s,z)\tilde{\eta}(ds, dz), \varphi(t)e_i\rangle_{\mathbb{H}^0} dt\\
&=& \mathbb{E}\int^T_0\langle u(t), \varphi(t)e_i\rangle_{\mathbb{H}^0} dt.
\end{eqnarray*}
Hence, we have $u=\bar{u}$\ \ $ dt\otimes \mathbb{P}-$a.s. and $u\in L^2(\Omega\times [0,T]; \mathbb{H}^2)$. Moreover, referring to Theorem A.1 in \cite{BHZ}, it gives that $u$ is an $\mathbb{H}^0-$valued c\`{a}dl\`{a}g and $\mathcal{F}_t-$adapted process, and for any $t\in [0,T]$, the following formula holds $\mathbb{P}-$a.s.
\begin{eqnarray}\notag
\|u(t)\|^2_{\mathbb{H}^0}&=&\|u_0\|^2_{\mathbb{H}^0}+2\int^t_0 \langle G(s), u(s)\rangle_{\mathbb{H}^0} ds+2\int^t_0\int_{\mathbb{Z}}\langle u(s-), S(s,z)\rangle_{\mathbb{H}^0}\tilde{\eta}(ds,dz)\\
\label{equ-805}
&&\ +\int^t_0\int_{\mathbb{Z}}\|S(s,z)\|^2_{\mathbb{H}^0}\eta(ds, dz).
\end{eqnarray}
Referring to \cite{BHZ}, in order to prove that $u$ is a solution of (\ref{equ-4}), it suffices to show
\begin{eqnarray}\label{equu-1}
G(s,\omega)&=&F(u(s,\omega)), \quad {\rm{for}}\ d\mathbb{P}\otimes dt-a.a. (s, \omega)\in [0,T]\times \Omega,\\
\label{equu-2}
S(s,\omega,z)&=&\sigma(s,\omega, u(s,\omega),z),\quad {\rm{for}} \ d\mathbb{P}\otimes dt\times \vartheta-a.a. (s,\omega,z)\in [0,T]\times \Omega\times \mathbb{Z}.
\end{eqnarray}
In the following part, we devote to proving (\ref{equu-1})-(\ref{equu-2}) one by one. The proof is mainly based on the Minty-Browder monotonicity argument observed by Menaldi, Sritharan and Sundar (\cite{M-S,S-S}).

Let $v$ be a progressively measurable process belonging to $L^2(\Omega\times [0,T];\mathbb{H}^2)\cap L^6(\Omega; L^{\infty}([0,T];\mathbb{H}^1))$. Applying It\^{o} formula, we have
\begin{eqnarray*}
e^{-\int^t_0\rho(v(s),s)ds}\|u_n(t)\|^2_{\mathbb{H}^0}&=&\|\Pi_nu_0\|^2_{\mathbb{H}^0}-\int^t_0e^{-\int^s_0\rho(v(r),r)dr}\rho(v(s),s)\|u_n(s)\|^2_{\mathbb{H}^0}ds\\
&&\ +2\int^t_0e^{-\int^s_0\rho(v(r),r)dr}\langle u_n(s),  F_n(u_n(s))\rangle_{\mathbb{H}^0} ds\\
&&\ +2\int^t_0\int_{\mathbb{Z}}e^{-\int^s_0\rho(v(r),r)dr}\langle u_n(s), \sigma_n(s-,u_n(s-),z) \rangle_{\mathbb{H}^0}\tilde{\eta}(ds,dz)\\
&&\ +\int^t_0\int_{\mathbb{Z}}e^{-\int^s_0\rho(v(r),r)dr}\|\sigma_n(s-,u_n(s-),z)\|^2_{\mathbb{H}^0}\eta(ds,dz).
\end{eqnarray*}
Taking expectation of both sides of the above equation and using an identity $|x|^2=2(x,y)-|y|^2+|x-y|^2$, we get
\begin{eqnarray*}
&&\mathbb{E}[e^{-\int^t_0\rho(v(s),s)ds}\|u_n(t)\|^2_{\mathbb{H}^0}]-\mathbb{E}\|\Pi_n u_0\|^2_{\mathbb{H}^0}\\
&=&\mathbb{E}\Big[-\int^t_0e^{-\int^s_0\rho(v(r),r)dr}\rho(v(s),s)\Big(2\langle u_n(s), v(s)\rangle_{\mathbb{H}^0}-\|v(s)\|^2_{\mathbb{H}^0}\Big)ds\Big]\\
&&\ +2\mathbb{E}\Big[\int^t_0e^{-\int^s_0\rho(v(r),r)dr}\Big(\langle  F_n(u_n(s))-F_n(v(s)), v(s)\rangle_{\mathbb{H}^0}+\langle F_n(v(s)), u_n(s)\rangle_{\mathbb{H}^0}\Big) ds\Big]\\
&&\ +\mathbb{E}\Big[\int^t_0\int_{\mathbb{Z}}e^{-\int^s_0\rho(v(r),r)dr}\Big(2\langle\sigma_n(s,u_n(s),z), \sigma_n(v(s),z) \rangle_{\mathbb{H}^0}-\|\sigma_n(s,v(s),z)\|^2_{\mathbb{H}^0}\Big)\vartheta(dz)ds\Big]\\
&&\ +\mathbb{E}\Big[-\int^t_0e^{-\int^s_0\rho(v(r),r)dr}\rho(v(s),s)\|u_n(s)-v(s)\|^2_{\mathbb{H}^0}ds\Big]\\
&&\ +2\mathbb{E}\Big[\int^t_0e^{-\int^s_0\rho(v(r),r)dr}\langle F_n(u_n(s))-F_n(v(s)), u_n(s)-v(s)\rangle_{\mathbb{H}^0} ds\Big]\\
&&\ +\mathbb{E}\Big[\int^t_0\int_{\mathbb{Z}}e^{-\int^s_0\rho(v(r),r)dr}\|\sigma_n(s,u_n(s),z)-\sigma_n(s,v(s),z)\|^2_{\mathbb{H}^0}\vartheta(dz)ds\Big].
\end{eqnarray*}
Set $\rho(v(s),s)= 2C_0(\|v(s)\|_{\mathbb{H}^1}\|v(s)\|_{\mathbb{H}^2}+1)+K_2$, where $C_0$ is in (\ref{equ-18}) and $K_2$ is appeared in (\ref{equ-6-1}). Then, we deduce from (\ref{equ-6-1}) and (\ref{equ-18})that
\begin{eqnarray*}
&&\mathbb{E}\Big[-\int^t_0e^{-\int^s_0\rho(v(r),r)dr}\rho(v(s),s)\|u_n(s)-v(s)\|^2_{\mathbb{H}^0}ds\Big]\\
&&\ +2\mathbb{E}\Big[\int^t_0e^{-\int^s_0\rho(v(r),r)dr}\langle F_n(u_n(s))-F_n(v(s)), u_n(s)-v(s)\rangle_{\mathbb{H}^0} ds\Big]\\
&&\ +\mathbb{E}\Big[\int^t_0\int_{\mathbb{Z}}e^{-\int^s_0\rho(v(r),r)dr}\|\sigma_n(s,u_n(s),z)-\sigma_n(s,v(s),z)\|^2_{\mathbb{H}^0}\vartheta(dz)ds\Big]\leq 0.
\end{eqnarray*}
Thus,
\begin{eqnarray*}
&&\mathbb{E}[e^{-\int^t_0\rho(v(s),s)ds}\|u_n(t)\|^2_{\mathbb{H}^0}]-\mathbb{E}\|\Pi_nu_0\|^2_{\mathbb{H}^0}\\
&\leq&\mathbb{E}\Big[-\int^t_0e^{-\int^s_0\rho(v(r),r)dr}\rho(v(s),s)\Big(2\langle u_n(s), v(s)\rangle_{\mathbb{H}^0}-\|v(s)\|^2_{\mathbb{H}^0}\Big)ds\Big]\\
&&\ +2\mathbb{E}\Big[\int^t_0e^{-\int^s_0\rho(v(r),r)dr}\Big(\langle F_n(u_n(s))-F_n(v(s)), v(s)\rangle_{\mathbb{H}^0}+\langle F_n(v(s)), u_n(s)\rangle_{\mathbb{H}^0}\Big) ds\Big]\\
&&\ +\mathbb{E}\Big[\int^t_0\int_{\mathbb{Z}}e^{-\int^s_0\rho(v(r),r)dr}\Big(2\langle\sigma_n(s,u_n(s),z), \sigma_n(s,v(s),z) \rangle_{\mathbb{H}^0}-\|\sigma_n(s,v(s),z)\|^2_{\mathbb{H}^0}\Big)\vartheta(dz)ds\Big]\\
&=:& J_1(t)+J_2(t)+J_3(t).
\end{eqnarray*}
By the weak convergence (i), we have
\begin{eqnarray}\label{equ-19}
J_1(t)\rightarrow \mathbb{E}\Big[-\int^t_0e^{-\int^s_0\rho(v(r),r)dr}\rho(v(s),s)(2\langle \bar{u}(s), v(s)\rangle_{\mathbb{H}^0}-\|v(s)\|^2_{\mathbb{H}^0})ds\Big].
\end{eqnarray}
Using the weak convergence (iii), we deduce that
\begin{eqnarray}\label{equ-800}
2\mathbb{E}\int^t_0e^{-\int^s_0\rho(v(r),r)dr}\langle F_n(u_n(s)), v(s)\rangle_{\mathbb{H}^0} ds\rightarrow 2\mathbb{E}\int^t_0e^{-\int^s_0\rho(v(r),r)dr}\langle G(s), v(s)\rangle_{\mathbb{H}^0} ds.
\end{eqnarray}
With the aid of the Lebesgue dominated convergence theorem, it yields that
\begin{eqnarray}\label{equ-801}
2\mathbb{E}\int^t_0e^{-\int^s_0\rho(v(r),r)dr}\langle -F_n(v(s)), v(s)\rangle_{\mathbb{H}^0}  ds\rightarrow 2\mathbb{E}\int^t_0e^{-\int^s_0\rho(v(r),r)dr}\langle - F(v(s)), v(s)\rangle_{\mathbb{H}^0}  ds.
\end{eqnarray}
Using the Cauchy-Schwarz inequality, it follows that
\begin{eqnarray*}
&&2\mathbb{E}\int^t_0e^{-\int^s_0\rho(v(r),r)dr}(\langle F_n(v(s)), u_n(s)\rangle_{\mathbb{H}^0}-\langle F(v(s)), \bar{u}(s) \rangle_{\mathbb{H}^0}) ds\\
&=& 2\mathbb{E}\int^t_0e^{-\int^s_0\rho(v(r),r)dr}(\langle F_n(v(s))-F(v(s)), u_n(s)\rangle_{\mathbb{H}^0}+\langle F(v(s)), u_n(s)-\bar{u}(s) \rangle_{\mathbb{H}^0}) ds\\
&\leq& 2\Big(\mathbb{E}\int^t_0e^{-\int^s_0\rho(v(r),r)dr}\|F_n(v(s))-F(v(s))\|^2_{\mathbb{H}^0}ds\Big)^{\frac{1}{2}}\Big(\mathbb{E}\int^t_0e^{-\int^s_0\rho(v(r),r)dr}\|u_n(s)\|^2_{\mathbb{H}^0}ds\Big)^{\frac{1}{2}}\\
&&\ +2\mathbb{E}\int^t_0e^{-\int^s_0\rho(v(r),r)dr}\langle F(v(s)), u_n(s)-\bar{u}(s) \rangle_{\mathbb{H}^0}) ds\\
&\leq& 2\Big[\sup_n\Big(\mathbb{E}\int^t_0e^{-\int^s_0\rho(v(r),r)dr}\|u_n(s)\|^2_{\mathbb{H}^0}ds\Big)^{\frac{1}{2}}\Big]\Big(\mathbb{E}\int^t_0e^{-\int^s_0\rho(v(r),r)dr}\| F_n(v(s))-F(v(s))\|^2_{\mathbb{H}^0}ds\Big)^{\frac{1}{2}}\\
&&\ +2\mathbb{E}\int^t_0e^{-\int^s_0\rho(v(r),r)dr}\langle F(v(s)), u_n(s)-\bar{u}(s) \rangle_{\mathbb{H}^0} ds.
\end{eqnarray*}
By using (\ref{equ-12}), the Lebesgue dominated convergence theorem and the weak convergence (i), it gives that
\begin{eqnarray}\label{equ-802}
2\mathbb{E}\int^t_0e^{-\int^s_0\rho(v(r),r)dr}\Big(\langle F_n(v(s)), u_n(s)\rangle_{\mathbb{H}^0}-\langle F(v(s)), \bar{u}(s) \rangle_{\mathbb{H}^0}\Big) ds\rightarrow 0, \quad {\rm{as}}\quad n\rightarrow \infty.
\end{eqnarray}
Combining (\ref{equ-800})-(\ref{equ-802}), we have
\begin{eqnarray}\label{equ-803}
J_2(t)\rightarrow 2\mathbb{E}\int^t_0e^{-\int^s_0\rho(v(r),r)dr}\Big(\langle G(s)-F(v(s)), v(s)\rangle_{\mathbb{H}^0}+\langle F(v(s)), \bar{u}(s)\rangle_{\mathbb{H}^0}\Big) ds, \quad {\rm{as}} \quad n\rightarrow \infty.
\end{eqnarray}
For the estimates of $J_3(t)$, we adopt the method from \cite{BHZ}. By using the H\"{o}lder inequality, we get
\begin{eqnarray*}
&&\mathbb{E}\Big[\int^t_0\int_{\mathbb{Z}}e^{-\int^s_0\rho(v(r),r)dr}\Big(2\langle\sigma_n(s,u_n(s),z), \sigma_n(s,v(s),z) \rangle_{\mathbb{H}^0}-\|\sigma_n(s,v(s),z)\|^2_{\mathbb{H}^0}\Big)\vartheta(dz)ds\Big]\\
&=& 2\mathbb{E}\int^t_0\int_{\mathbb{Z}}e^{-\int^s_0\rho(v(r),r)dr}\langle\sigma_n(s,u_n(s),z), \sigma(s,v(s),z) \rangle_{\mathbb{H}^0}\vartheta(dz)ds\\
&&\ +2\mathbb{E}\int^t_0\int_{\mathbb{Z}}e^{-\int^s_0\rho(v(r),r)dr}\langle\sigma_n(s,u_n(s),z), (\sigma(s,v(s),z)-\sigma_n(s,v(s),z))\rangle_{\mathbb{H}^0}\vartheta(dz)ds\\
&&\ -\mathbb{E}\int^t_0\int_{\mathbb{Z}}e^{-\int^s_0\rho(v(r),r)dr}\|\sigma_n(s,v(s),z)\|^2_{\mathbb{H}^0}\vartheta(dz)ds\\
&\leq& 2\mathbb{E}\int^t_0\int_{\mathbb{Z}}e^{-\int^s_0\rho(v(r),r)dr}\langle\sigma_n(s,u_n(s),z), \sigma(s,v(s),z) \rangle_{\mathbb{H}^0}\vartheta(dz)ds\\
&&\ +2C\Big(\mathbb{E}\int^t_0\int_{\mathbb{Z}}\| \sigma(s,v(s),z)-\sigma_n(s,v(s),z)\|_{\mathbb{H}^0}\vartheta(dz)ds\Big)^{\frac{1}{2}}\\
&&\ -\mathbb{E}\int^t_0\int_{\mathbb{Z}}e^{-\int^s_0\rho(v(r),r)dr}\|\sigma_n(s,v(s),z)\|^2_{\mathbb{H}^0}\vartheta(dz)ds,
\end{eqnarray*}
where
\begin{eqnarray*}
C&:=&\sup_n (\mathbb{E}\int^t_0\int_{\mathbb{Z}}e^{-2\int^s_0\rho(v(r),r)dr}\|\sigma_n(s,u_n(s),z)\|^2_{\mathbb{H}^0}\vartheta(dz)ds)^{\frac{1}{2}}\\
&\leq&\sqrt{K_1}\sup_{n}\mathbb{E}\Big(\int^t_0\|u_n(s)\|^2_{\mathbb{H}^0}ds\Big)^{\frac{1}{2}}\\
&\leq&\sqrt{K_1}T\sup_{n}\mathbb{E}\sup_{t\in [0,T]}\|u_n(t)\|^2_{\mathbb{H}^0}<\infty.
\end{eqnarray*}
Then applying the weak convergence (iv) and the Lebesgue dominated convergence theorem, we deduce that as $n\rightarrow \infty$,
\begin{eqnarray}\label{equ-804}
J_3(t)\rightarrow 2\mathbb{E}\int^t_0\int_{\mathbb{Z}}e^{-\int^s_0\rho(v(r),r)dr}\Big(\langle S(s,z), \sigma(s,v(s),z) \rangle_{\mathbb{H}^0}-\|\sigma(s,v(s),z)\|^2_{\mathbb{H}^0}\Big)\vartheta(dz)ds.
\end{eqnarray}
Combining (\ref{equ-19}), (\ref{equ-803}) and (\ref{equ-804}), we conclude that
\begin{eqnarray*}
&&\liminf_{n\rightarrow \infty}\Big[\mathbb{E}[e^{-\int^t_0\rho(v(s),s)ds}\|u_n(t)\|^2_{\mathbb{H}^0}]-\mathbb{E}\|\Pi_nu_0\|^2_{\mathbb{H}^0}\Big]\\
&\leq& \mathbb{E}\Big[-\int^t_0e^{-\int^s_0\rho(v(r),r)dr}\rho(v(s),s)(2\langle \bar{u}(s), v(s)\rangle_{\mathbb{H}^0}-\|v(s)\|^2_{\mathbb{H}^0})ds\Big]\\
&&\ +2\mathbb{E}\int^t_0e^{-\int^s_0\rho(v(r),r)dr}\Big(\langle G(s)-F(v(s)), v(s)\rangle_{\mathbb{H}^0}+\langle F(v(s)), \bar{u}(s)\rangle_{\mathbb{H}^0}\Big) ds\\
&&\ +2\mathbb{E}\int^t_0\int_{\mathbb{Z}}e^{-\int^s_0\rho(v(r),r)dr}\Big(\langle S(s,z), \sigma(s,v(s),z) \rangle_{\mathbb{H}^0}-\|\sigma(s,v(s),z)\|^2_{\mathbb{H}^0}\Big)\vartheta(dz)ds.
\end{eqnarray*}
Recall that $u_n(t)\rightarrow u(t)$ weakly in $L^2(\Omega; \mathbb{H}^0)$ and by $\mathbb{E}\|\Pi_nu_0\|^2_{\mathbb{H}^0}\leq \mathbb{E}\|u_0\|^2_{\mathbb{H}^0}$, we have
\begin{eqnarray*}
\mathbb{E}[e^{-\int^t_0\rho(v(s),s)ds}\|u(t)\|^2_{\mathbb{H}^0}]-\mathbb{E}\|u_0\|^2_{\mathbb{H}^0}\leq\liminf_{n\rightarrow \infty}\Big[\mathbb{E}e^{-\int^t_0\rho(v(s),s)ds}\|u_n(t)\|^2_{\mathbb{H}^0}-\mathbb{E}\|\Pi_nu_0\|^2_{\mathbb{H}^0}\Big].
\end{eqnarray*}
Hence, it follows that
\begin{eqnarray}\notag
&&\mathbb{E}[e^{-\int^t_0\rho(v(s),s)ds}\|u(t)\|^2_{\mathbb{H}^0}]-\mathbb{E}\|u_0\|^2_{\mathbb{H}^0}\\ \notag
&\leq& \mathbb{E}\Big[-\int^t_0e^{-\int^s_0\rho(v(r),r)dr}\rho(v(s),s)(2\langle \bar{u}(s), v(s)\rangle_{\mathbb{H}^0}-\|v(s)\|^2_{\mathbb{H}^0})ds\Big]\\ \notag
&&\ +2\mathbb{E}\int^t_0e^{-\int^s_0\rho(v(r),r)dr}\Big(\langle G(s)-F(v(s)), v(s)\rangle_{\mathbb{H}^0}+\langle F(v(s)), \bar{u}(s)\rangle_{\mathbb{H}^0}\Big) ds\\
\label{equ-808}
&&\ +2\mathbb{E}\int^t_0\int_{\mathbb{Z}}e^{-\int^s_0\rho(v(r),r)dr}(\langle S(s,z), \sigma(s,v(s),z) \rangle_{\mathbb{H}^0}-\|\sigma(s,v(s),z)\|^2_{\mathbb{H}^0})\vartheta(dz)ds.
\end{eqnarray}
Recall $\|u(t)\|^2_{\mathbb{H}^0}$ defined by (\ref{equ-805}), by applying It\^{o} formula to the process $\|u(t)\|^2_{\mathbb{H}^0}e^{-\int^t_0\rho(v(s),s)ds}$, we get
\begin{eqnarray}\notag
e^{-\int^t_0\rho(v(s),s)ds}\|u(t)\|^2_{\mathbb{H}^0}&=&\|u_0\|^2_{\mathbb{H}^0}-\int^t_0e^{-\int^s_0\rho(v(r),r)dr}\rho(v(s),s)\|u(s)\|^2_{\mathbb{H}^0}ds\\ \notag
&&\ +2\int^t_0e^{-\int^s_0\rho(v(r),r)dr}\langle u(s),  G(s)\rangle_{\mathbb{H}^0} ds\\ \notag
&&\ +2\int^t_0\int_{\mathbb{Z}}e^{-\int^s_0\rho(v(r),r)dr}\langle u(s), S(s,z) \rangle_{\mathbb{H}^0}\tilde{\eta}(ds,dz)\\
\label{equ-806}
&&\ +\int^t_0\int_{\mathbb{Z}}e^{-\int^s_0\rho(v(r),r)dr}\|S(s,z)\|^2_{\mathbb{H}^0}\eta(ds,dz).
\end{eqnarray}
Taking expectation of both sides of the above equation (\ref{equ-806}), we obtain
\begin{eqnarray}\notag
&&\mathbb{E}e^{-\int^t_0\rho(v(s),s)ds}\|u(t)\|^2_{\mathbb{H}^0}-\mathbb{E}\|u_0\|^2_{\mathbb{H}^0}\\ \notag
&=&\ -\mathbb{E}\int^t_0e^{-\int^s_0\rho(v(r),r)dr}\rho(v(s),s)\|u(s)\|^2_{\mathbb{H}^0}ds\\ \notag
&&\ +2\mathbb{E}\int^t_0e^{-\int^s_0\rho(v(r),r)dr}\langle u(s),  G(s)\rangle_{\mathbb{H}^0} ds\\
\label{equ-807}
&&\ +\mathbb{E}\int^t_0\int_{\mathbb{Z}}e^{-\int^s_0\rho(v(r),r)dr}\|S(s,z)\|^2_{\mathbb{H}^0}\eta(ds,dz).
\end{eqnarray}
Plugging (\ref{equ-807}) into (\ref{equ-808}), we get for any $v\in L^2(\Omega\times [0,T];\mathbb{H}^2)\cap L^6(\Omega; L^{\infty}([0,T];\mathbb{H}^1))$,
\begin{eqnarray}\notag
&&-\mathbb{E}\int^t_0e^{-\int^s_0\rho(v(r),r)dr}\rho(v(s),s)\|\bar{u}(s)-v(s)\|^2_{\mathbb{H}^0}ds\\ \notag
&&\
+2\mathbb{E}\int^t_0e^{-\int^s_0\rho(v(r),r)dr}\langle \bar{u}(s)-v(s), G(s)-F(v(s))\rangle_{\mathbb{H}^0}ds\\
\label{equ-809}
&&\ + \mathbb{E}\int^t_0\int_{\mathbb{Z}}e^{-\int^s_0\rho(v(r),r)dr}\|S(s,z)-\sigma(s,v(s),z)\|^2_{\mathbb{H}^0}\vartheta(dz)ds\leq 0.
\end{eqnarray}
Choosing $v=\bar{u}$, we obtain
\begin{eqnarray*}
\mathbb{E}\int^t_0\int_{\mathbb{Z}}e^{-\int^s_0\rho(v(r),r)dr}\|S(s,z)-\sigma(s,\bar{u}(s),z)\|^2_{\mathbb{H}^0}\vartheta(dz)ds\leq 0,
\end{eqnarray*}
which implies $S(\cdot,\cdot)=\sigma(\cdot, \bar{u}(\cdot), \cdot)$ in $L^2(\Omega\times [0,T]; L^2(\mathbb{Z},\vartheta;\mathbb{H}^0))$. Hence, (\ref{equu-2}) holds.

Now, replacing $v$ in (\ref{equ-809}) with $v_{\varepsilon}:=\bar{u}-\varepsilon \psi$, $\psi\in L^{\infty}(\Omega\times [0,T]; \mathbb{H}^1)$ and $\varepsilon\in[-1,1]$, then
 \begin{eqnarray*}
&&-\varepsilon^2\mathbb{E}\int^t_0e^{-\int^s_0\rho(\bar{u}-\varepsilon \psi,r)dr}\rho(v_{\varepsilon}(s),s)\|\psi(s)\|^2_{\mathbb{H}^0}ds\\
&&\ +2 \varepsilon \mathbb{E}\int^t_0e^{-\int^s_0\rho(\bar{u}-\varepsilon \psi,r)dr}\langle\psi(s), G(s)-F(u(s)-\varepsilon\psi(s))\rangle_{\mathbb{H}^0}ds\leq 0.
\end{eqnarray*}
Dividing the above inequality by $\varepsilon$, we get
\begin{eqnarray}\notag
&&-\varepsilon\mathbb{E}\int^t_0e^{-\int^s_0\rho(\bar{u}-\varepsilon \psi,r)dr}\rho(v_{\varepsilon}(s),s)\|\psi(s)\|^2_{\mathbb{H}^0}ds\\
\label{equ-30}
&&\ +2 \mathbb{E}\int^t_0e^{-\int^s_0\rho(\bar{u}-\varepsilon \psi,r)dr}\langle\psi(s), G(s)-F(u(s)-\varepsilon\psi(s))\rangle_{\mathbb{H}^0}ds\leq 0
\end{eqnarray}
for $\varepsilon>0$, and
\begin{eqnarray}\notag
&&-\varepsilon\mathbb{E}\int^t_0e^{-\int^s_0\rho(\bar{u}-\varepsilon \psi,r)dr}\rho(v_{\varepsilon}(s),s)\|\psi(s)\|^2_{\mathbb{H}^0}ds\\
\label{equ-30-1}
&&\ +2 \mathbb{E}\int^t_0e^{-\int^s_0\rho(\bar{u}-\varepsilon \psi,r)dr}\langle\psi(s), G(s)-F(u(s)-\varepsilon\psi(s))\rangle_{\mathbb{H}^0}ds\geq 0
\end{eqnarray}
for $\varepsilon<0$.

Due to (\ref{equ-18}), we get
\[
|\langle \varepsilon\psi(s), F(u(s)-\varepsilon\psi(s))-F(u(s))\rangle_{\mathbb{H}^0}|\leq \frac{\varepsilon^2}{2}\|\psi(s)\|^2_{\mathbb{H}^1}+\varepsilon^2 C_0 \|\psi(s)\|^2_{\mathbb{H}^0} (\|u(s)\|_{\mathbb{H}^1}\|u(s)\|_{\mathbb{H}^2}+1),
\]
by using the Lebesgue dominated convergence theorem, it follows that
\begin{eqnarray*}
&&2 \mathbb{E}\int^t_0e^{-\int^s_0\rho(\bar{u}(r)-\varepsilon \psi(r),r)dr}\langle\psi(s), G(s)-F(u(s)-\varepsilon \psi(s))\rangle_{\mathbb{H}^0}ds\\
&\rightarrow& 2 \mathbb{E}\int^t_0e^{-\int^s_0\rho(\bar{u}(r),r)dr}\langle\psi(s), G(s)-F(u(s))\rangle_{\mathbb{H}^0}ds, \quad {\rm{as}} \quad \varepsilon\rightarrow 0.
\end{eqnarray*}
Letting $\varepsilon\rightarrow 0^+$ in (\ref{equ-30}) and  $\varepsilon\rightarrow 0^-$ in (\ref{equ-30-1}) to obtain
\begin{eqnarray*}
\mathbb{E}\int^t_0e^{-\int^s_0\rho(\bar{u}(r),r)dr}\langle\psi(s), G(s)-F(u(s))\rangle_{\mathbb{H}^0}ds=0.
\end{eqnarray*}
Since $\psi$ is arbitrary, we conclude (\ref{equu-1}).

Based on all the above results, we establish that there exists a solution to (\ref{equ-4}) if the initial value $u_0 \in L^6(\Omega, \mathcal{F}_0; \mathbb{H}^1)$.

\textbf{Step 2: } General case $u_0\in L^2(\Omega, \mathcal{F}_0; \mathbb{H}^1)$.

Taking any sequence $Y_{n,0}\in L^6(\Omega, \mathcal{F}_0; \mathbb{H}^1)$  satisfing $\mathbb{E}[\|Y_{n,0}-u_0\|^2_{\mathbb{H}^1}]\rightarrow 0$. Let $Y_n(t), t\geq 0$ be the solution of the following equation
\begin{eqnarray*}
dY_n(t)&=&F(Y_n(t))dt+\int_{\mathbb{Z}}\sigma(t-,Y_n(t-), z)\tilde{\eta}(dt,dz),\\
Y_n(0)&=&Y_{n,0}\in \mathbb{H}^1.
\end{eqnarray*}
The existence of $Y_n$ is guaranteed by Step 1. Moreover, as in the proof of (\ref{equ-12})-(\ref{equ-13}), we can prove
\begin{eqnarray}\label{equ-70}
\sup_n \mathbb{E}\Big(\sup_{t\in [0,T]}\|Y_n(t)\|^{2}_{\mathbb{H}^1}+\int^T_0\|Y_n(t)\|^2_{\mathbb{H}^2}dt\Big)&\leq& C(1+\sup_{n}\mathbb{E}\|Y_{n,0}\|^2_{\mathbb{H}^1})<\infty,\\
\label{equ-70-1}
\sup_n \mathbb{E}\Big(\sup_{t\in [0,T]}\|Y_n(t)\|^{6}_{\mathbb{H}^1}+\int^T_0\|Y_n(t)\|^{4}_{\mathbb{H}^1}\|Y_n(t)\|^2_{\mathbb{H}^2}dt\Big)&\leq&  C(1+\sup_{n}\mathbb{E}\|Y_{n,0}\|^6_{\mathbb{H}^1})<\infty,
\end{eqnarray}
which yields $F(Y_n)$ is bounded in $L^2(\Omega\times[0,T];\mathbb{H}^0)$.
Then there exists a subsequence still denoted by $\{Y_n, n\geq 1\}$ and a process $Y\in L^2(\Omega\times [0,T]; \mathbb{H}^2)\cap L^2(\Omega; L^{\infty}([0,T];\mathbb{H}^1))$ such that
\begin{description}
  \item[(I)] $Y_n\rightarrow Y$ weakly \ in \ $L^2(\Omega\times [0,T]; \mathbb{H}^2)$,
  \item[(II)] $Y_n\rightarrow Y$ in $ L^2(\Omega; L^{\infty}([0,T];\mathbb{H}^1))$ with respect to the weak star topology,
   \item[(III)] $F(Y_n)\rightarrow F(Y)$ weakly \ in \ $L^2(\Omega\times [0,T]; \mathbb{H}^0)$.
\end{description}
In the following, we adopt the same method as the proof of (3.42) in \cite{R-Z} to obtain $Y_n$ converges to $Y$ in probability in $L^{\infty}([0,T];\mathbb{H}^0)$.

For $R>0$, define the stopping time
\[
\tau^n_R:=\inf\{t\in [0,\infty): \|Y_n(t)\|_{\mathbb{H}^1}>R\}.
\]
$\tau^n_R$ is really a stopping time since $Y_n$ is continuous in $\mathbb{H}^1$. Then it follows from (\ref{equ-70}) that there exists a constant $M$ independent of $n, R$, so that
\begin{eqnarray}\label{equ-73}
\mathbb{P}(\tau^n_R\leq T)\leq \mathbb{P}\Big(\sup_{0\leq t\leq T}\|Y_n(t)\|_{\mathbb{H}^1}> R\Big)\leq \frac{M}{R^2}.
\end{eqnarray}
When $R$ is fixed, as in the proof of Theorem 3.7 in \cite{R-Z-0}, we find that
\begin{eqnarray}\label{equ-71}
\mathbb{E}\Big[\sup_{t\in [0,T]}\|Y_n(t\wedge \tau^n_R\wedge \tau^m_R)-Y_m(t\wedge \tau^n_R\wedge \tau^m_R)\|^2_{\mathbb{H}^0}\Big]\leq C_{R,T}\mathbb{E}[\|Y_{n,0}-Y_{m,0}\|^2_{\mathbb{H}^0}].
\end{eqnarray}
For $\eta>0$ and any $R>0$, we have
\begin{eqnarray}\notag
&&\mathbb{P}\Big(\sup_{0\leq t\leq T}\|Y_n(t)-Y_m(t)\|_{\mathbb{H}^0}>\eta\Big)\\ \notag
&\leq& \mathbb{P}(\tau^n_R\leq T)+\mathbb{P}(\tau^m_R\leq T)\\
\label{equ-72}
&&\ +\mathbb{P}\Big(\sup_{t\in [0,T]}\|Y_n(t\wedge \tau^n_R\wedge \tau^m_R)-Y_m(t\wedge \tau^n_R\wedge \tau^m_R)\|_{\mathbb{H}^0}> \eta\Big).
\end{eqnarray}
Given an arbitrarily small constant $\delta>0$, in view of (\ref{equ-73}), one can choose $R$ such that $\mathbb{P}(\tau^n_R\leq T)\leq \frac{\delta}{4}$ and $\mathbb{P}(\tau^m_R\leq T)\leq \frac{\delta}{4}$. For such $R$, by
(\ref{equ-71}), there exists $N_0$ such that for $m,n\geq N_0$,
\[
\mathbb{P}\Big(\sup_{t\in [0,T]}\|Y_n(t\wedge \tau^n_R\wedge \tau^m_R)-Y_m(t\wedge \tau^n_R\wedge \tau^m_R)\|_{\mathbb{H}^0}> \eta\Big)\leq \frac{\delta}{4}.
\]
Hence, by (\ref{equ-72}), we get
\[
\mathbb{P}\Big(\sup_{0\leq t\leq T}\|Y_n(t)-Y_m(t)\|_{\mathbb{H}^0}>\eta\Big)\leq \delta.
\]
That is
\begin{eqnarray}\label{equ-74}
\lim_{n,m\rightarrow \infty}\mathbb{P}\Big(\sup_{0\leq t\leq T}\|Y_n(t)-Y_m(t)\|_{\mathbb{H}^0}>\eta\Big)=0,
\end{eqnarray}
which implies that $Y_n$ converges to $Y$ in probability in $L^{\infty}([0,T];\mathbb{H}^0)$.

Finally, we need to show that $Y$ solves (\ref{equ-4}). It suffices to prove that for $v\in \mathbb{H}^0$,
\begin{eqnarray}\label{equ-31}
\langle Y(t), v\rangle_{\mathbb{H}^0}=\langle u_0, v\rangle_{\mathbb{H}^0}+\int^t_0\langle F(Y(s)), v\rangle_{\mathbb{H}^0}ds
 +\int^t_0\int_{\mathbb{Z}}\langle \sigma(s-,Y(s-),z)\tilde{\eta}(ds,dz),v\rangle_{\mathbb{H}^0} ds.
\end{eqnarray}
We know that for every $n\geq 1$,
\begin{eqnarray}\label{equ-32}
\langle Y_n(t), v\rangle_{\mathbb{H}^0}=\langle Y_{n,0}, v\rangle_{\mathbb{H}^0}+\int^t_0\langle F(Y_n(s)), v\rangle_{\mathbb{H}^0}ds +\int^t_0\int_{\mathbb{Z}}\langle \sigma(s-,Y_n(s-),z)\tilde{\eta}(ds,dz),v\rangle_{\mathbb{H}^0} ds.
\end{eqnarray}
Letting $n\rightarrow \infty$, using the convergence in probability, (I)-(III) and the Lebesgue dominated convergence theorem, we see that each term in (\ref{equ-31}) tends to the corresponding term in (\ref{equ-32}).
Hence, there exists a strong solution to (\ref{equ-4}) when the initial value $u_0\in  L^2(\Omega, \mathcal{F}_0; \mathbb{H}^1)$.

%Moreover, since $u_n$ weakly convergence to $u$ in $L^2(\Omega\times [0,T]; H^2)$, we have
%\[
%\mathbb{E}\int^T_0\|u(t)\|^2_{H^2}\leq \liminf_{n\rightarrow \infty}\mathbb{E}\int^T_0\|u_n(t)\|^2_{H^2}\leq C(1+\mathbb{E}\|u_0\|^2_{\mathbb{H}^1}).
%\]
%$u_n$ convergence to $u$ in $L^{2}(\Omega; L^{\infty}( [0,T]; \mathbb{H}^1))$ with respect to the weak star topology.
%How about weak star convergence???
%
%We obtain (\ref{equ-35}).

\textbf{Step 3: } Uniqueness.
Suppose that $u$ and $v$ are two solutions of (\ref{equ-4}) with initial values $u_0$ and $v_0$, respectively.
For some constant $R>0$, define the stopping time
\begin{eqnarray*}
\tau_R:=\inf\{t\in [0,T]: \|u(t)\|^2_{\mathbb{H}^1}\geq R\}\wedge \inf\{t\in [0,T]: \|v(t)\|^2_{\mathbb{H}^1}\geq R\}\wedge T.
\end{eqnarray*}
Applying It\^{o} formula, we get
\begin{eqnarray}\notag
&&e^{-\int^{t\wedge \tau_R}_0\rho(v(s),s)ds}\|u(t\wedge \tau_R)-v(t\wedge \tau_R)\|^2_{\mathbb{H}^0}\\ \notag
&=&\|u_0-v_0\|^2_{\mathbb{H}^0}-\int^{t\wedge \tau_R}_0e^{-\int^s_0\rho(v(r),r)dr}\rho(v(s),s)\|u(s)-v(s)\|^2_{\mathbb{H}^0}ds\\ \notag
&&\ +2\int^{t\wedge \tau_R}_0e^{-\int^s_0\rho(v(r),r)dr}\langle u(s)-v(s), F(u(s))-F(v(s))\rangle ds\\ \notag
&&\ +2\int^{t\wedge \tau_R}_0\int_{\mathbb{Z}}e^{-\int^s_0\rho(v(r),r)dr}\langle u(s)-v(s), \sigma(s-,u(s-),z)-\sigma(s-,v(s-),z) \rangle_{\mathbb{H}^0}\tilde{\eta}(ds,dz)\\
\label{equ-33}
&&\ +\int^{t\wedge \tau_R}_0\int_{\mathbb{Z}}e^{-\int^s_0\rho(v(r),r)dr}\|\sigma(s-,u(s-),z)-\sigma(s-,v(s-),z)\|^2_{\mathbb{H}^0}\eta(ds,dz).
\end{eqnarray}
We also choose $\rho(v(s),s)= 2C_0(\|v(s)\|_{\mathbb{H}^1}\|v(s)\|_{\mathbb{H}^2}+1)+K_2$, where $C_0$ is in (\ref{equ-18}) and $K_2$ is appeared in (\ref{equ-6-1}). Then, we deduce from (\ref{equ-6-1}) and (\ref{equ-18})that
\begin{eqnarray}\notag
&&\mathbb{E}\Big[e^{-\int^{t\wedge \tau_R}_0\rho(v(s),s)ds}\|u(t\wedge \tau_R)-v(t\wedge \tau_R)\|^2_{\mathbb{H}^0}\Big]-\mathbb{E}\|u_0-v_0\|^2_{\mathbb{H}^0}\\ \notag
&=&-\mathbb{E}\int^{t\wedge \tau_R}_0e^{-\int^s_0\rho(v(r),r)dr}\rho(v(s),s)\|u(s)-v(s)\|^2_{\mathbb{H}^0}ds\\ \notag
&&\ +2\mathbb{E}\int^{t\wedge \tau_R}_0e^{-\int^s_0\rho(v(r),r)dr}\langle u(s)-v(s), F(u(s))-F(v(s))\rangle_{\mathbb{H}^0} ds\\
\label{equ-34}
&&\ +\mathbb{E}\int^{t\wedge \tau_R}_0\int_{\mathbb{Z}}e^{-\int^s_0\rho(v(r),r)dr}\|\sigma(s-,u(s-),z)-\sigma(s-,v(s-),z)\|^2_{\mathbb{H}^0}\eta(ds,dz)\leq 0.
\end{eqnarray}
Hence, if $u_0=v_0$, $\mathbb{P}-$a.s., then
\begin{eqnarray}\notag
\mathbb{E}\Big[e^{-\int^{t\wedge \tau_R}_0\rho(v(s),s)ds}\|u(t\wedge \tau_R)-v(t\wedge \tau_R)\|^2_{\mathbb{H}^0}\Big]=0,\quad t\in [0,T].
\end{eqnarray}
Clearly,
\[
\int^T_0\rho(v(s),s)ds<\infty.
\]
Therefore, by setting $R\rightarrow \infty$ (hence $\tau_R\uparrow T$), we obtain $u(t)=v(t)$, $\mathbb{P}-$a.s. $t\in [0,T]$. Then the pathwise uniqueness follows from the c\`{a}dl\`{a}g property of $u$ and $v$ in $\mathbb{H}^0$. We complete the proof.

\end{proof}

\section{The weak convergence approach}
In this part, we aim to prove the large deviations for (\ref{equ-4}).
%Firstly, we need to introduce some definitions.
\subsection{Controlled Poisson random measure}\label{ss-1}
Recall that a Poisson random measure $\mathbf{n}$ on $\mathbb{Z}_T$ with intensity measure $\vartheta_T$ and the definition of  $\mathcal{M}_{FC}(\mathbb{Z})$ have been introduced in
subsection \ref{s-2}.
Denote by $\mathbb{P}$ the measure induced by $\mathbf{n}$ on $(\mathcal{M}_{FC}(\mathbb{Z}_T),\mathcal{B}(\mathcal{M}_{FC}(\mathbb{Z}_T)))$. Let $\mathbb{M}=\mathcal{M}_{FC}(\mathbb{Z}_T)$, then $\mathbb{P}$ is the unique probability measure on $(\mathbb{M}, \mathcal{B}(\mathbb{M}))$, under which the canonical map $\eta:\mathbb{M}\rightarrow \mathbb{M}$, $\eta(m):=m$ is a Poisson random measure with intensity measure $\vartheta_T$. In this paper, we also consider probability $\mathbb{P}_{\theta}$, for $\theta>0$, under which $\eta$ is a Poisson random measure with intensity $\theta \vartheta_T$. The corresponding expectation operators will be denoted by $\mathbb{E}$ and  $\mathbb{E}_{\theta}$, respectively.

Set
\[
\mathbb{Y}=\mathbb{Z}\times[0,\infty), \quad \mathbb{Y}_T=[0,T]\times\mathbb{Y}.
\]
 Similarly, let $\bar{\mathbb{M}}=\mathcal{M}_{FC}(\mathbb{Y}_T)$ and let $\bar{\mathbb{P}}$ be the unique probability measure on $(\bar{\mathbb{M}},\mathcal{B}(\bar{\mathbb{M}}))$ under which the canonical mapping $\bar{\eta}:\bar{\mathbb{M}}\rightarrow \bar{\mathbb{M}}, \bar{\eta}(\bar{m}):= \bar{m}$ is a Poisson random measure with intensity measure $\bar{\vartheta}_T=\lambda_T\otimes\vartheta\otimes\lambda_{\infty}$, with $\lambda_{\infty}$ being Lebesgue measure on $[0,\infty)$. The expectation operator will be denoted by $\bar{\mathbb{E}}$. Let $\mathcal{F}_t:=\sigma\{\bar{\eta}((0,s]\times {O}):0\leq s\leq t, {O}\in \mathcal{B}(\mathbb{Y})\}$,
and denote by $\bar{\mathcal{F}}_t$ the completion under $\bar{\mathbb{P}}$. Let
 \[
 \bar{\mathcal{P}}\  {\rm{ be\ the\ predictable \ }} \ {\rm{\sigma-field\ on}}\ [0,T]\times \bar{\mathbb{M}}\ {\rm{with\ the\ filtration}}\  \{\bar{\mathcal{F}}_t:0\leq t\leq T\}\ {\rm{on }}\ (\bar{\mathbb{M}},\mathcal{B}(\bar{\mathbb{M}})
 \]
 and
 \[
  \bar{\mathcal{A}}\ {\rm{ be\ the\ class\ of\ all \ (\mathcal{B}(\mathbb{Z})\otimes\bar{\mathcal{P}} )/(\mathcal{B}[0,\infty))-measurable\ maps}}\  \varphi:\mathbb{Z}_T\times \bar{\mathbb{M}}\rightarrow [0,\infty).
  \]
For $\varphi\in \bar{\mathcal{A}}$, define a counting process $\eta^\varphi$ on $\mathbb{Z}_T$ by
\begin{eqnarray}
\eta^\varphi((0,t]\times U)=\int_{(0,t]\times U}\int_{(0,\infty)}I_{[0,\varphi(s,z)]}(r)\bar{\eta}(ds dzdr), \quad t\in [0,T], \quad U\in \mathcal{B}(\mathbb{Z}).
\end{eqnarray}

$\eta^\varphi$ is the controlled random measure with $\varphi$ selecting the intensity for the points at location $x$ and time $s$, in a possibly random but non-anticipating way. If $\varphi(s,z, \bar{m})\equiv \theta\in(0,\infty)$ for some $\bar{m}\in \bar{\mathbb{M}}$, we write $\eta^\varphi=\eta^{\theta}$. Note that $\eta^{\theta}$ has the same distribution with respect to $\bar{\mathbb{P}}$ as $\eta$ has with respect to $\mathbb{P}_{\theta}$.
 Define $l:[0,\infty)\rightarrow[0,\infty)$ by
\[
l(r)=r \log r-r+1,\quad r\in [0, \infty).
\]
For any $\varphi\in \bar{\mathcal{A}}$, the quantity
\begin{eqnarray}
L_T(\varphi)=\int_{\mathbb{Z}_T}l(\varphi(t,z,\omega))\vartheta_T(dtdz)
\end{eqnarray}
is well-defined as a $[0,\infty]-$valued random variable.
\subsection{A general large deviation principle}
In order to state a general criterion for large deviation principle (LDP) introduced by Budhiraja et al. in \cite{B-D-M}, we need the following notations.
Define
\[
S^{M}=\Big\{g:\mathbb{Z}_T\rightarrow [0,\infty): L_T(g)\leq M\Big\}, \quad S=\cup_{M\geq 1}S^M.
\]
A function $g\in S^{M}$ can be identified with a measure $\vartheta^g_T\in \mathbb{M}$ given by
\[
\vartheta^g_T(O)=\int_{\mathbb{O}}g(s,x)\vartheta_T(dsdx),\quad O\in \mathcal{B}(\mathbb{Z}_T).
\]
This identification induces a topology on $ S^{M}$, under which $ S^{M}$ is a compact space (see the Appendix of \cite{B-C-D}). Throughout this paper, we always  use this topology on $S^M$. Let
\[
\mathcal{U}^M=\Big\{\varphi\in \bar{\mathcal{A}}: \varphi(\omega)\in S^M, \bar{\mathbb{P}}-a.e. \Big\},
\]
where $\bar{\mathcal{A}}$ is defined in subsection \ref{ss-1}.

Let $\{\mathcal{G}^\varepsilon\}_{\varepsilon>0}$ be a family of measurable maps from $\bar{\mathbb{M}}$ to $\mathbb{U}$, where $\bar{\mathbb{M}}$ is introduced in subsection \ref{ss-1} and $\mathbb{U}$ is a Polish space. Let $u^\varepsilon=\mathcal{G}^\varepsilon(\varepsilon \eta^{\varepsilon^{-1}})$. Now, we list the following sufficient conditions for establishing LDP for the family $\{u^\varepsilon\}_{\varepsilon>0}$.
\begin{description}
  \item[\textbf{Condition A} ] There exists a measurable map $\mathcal{G}^0: \bar{\mathbb{M}}\rightarrow \mathbb{U}$ such that the following hold.
\end{description}
\begin{description}
\item[(i)] For every $M<\infty$, let $g_n, g\in S^M$  be such that $g_n\rightarrow g$ as $n\rightarrow \infty$. Then,
      $\mathcal{G}^0(\vartheta^{g_n}_T)\rightarrow \mathcal{G}^0(\vartheta^{g}_T)$ in $\mathbb{U}$.
  \item[(ii)] For every $M<\infty$, let $\{\varphi_{\varepsilon}: \varepsilon>0\}\subset \mathcal{U}^M$ be such that $\varphi_{\varepsilon}$ converges in distribution to $\varphi$ as $\varepsilon\rightarrow 0$. Then,
     $\mathcal{G}^{\varepsilon}(\varepsilon \eta^{\varepsilon^{-1}\varphi_{\varepsilon}})$ converges to $\mathcal{G}^0(\vartheta^{\varphi}_T)$ in distribution.

\end{description}
The following result is due to Budhiraja et al. in \cite{B-D-M}.
\begin{thm}
Suppose the above \textbf{Condition A} holds. Then $u^{\varepsilon}$ satisfies a large deviation principle on $\mathbb{U}$ with the good rate function $I$ given by
\begin{eqnarray}\label{equ-57}
I(f)=\inf_{\{g\in {S}: f=\mathcal{G}^0(\vartheta^g_T)\}}   \Big\{L_T(g)\Big\},\ \ \forall f\in\mathbb{U}.
\end{eqnarray}
By convention, $I(\emptyset)=\infty.$
\end{thm}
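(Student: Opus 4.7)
The plan is to prove the theorem via the equivalent Laplace principle, since on the Polish space $\mathbb{U}$ the Laplace principle with a good rate function is equivalent to the large deviation principle. The engine is the variational representation formula of Budhiraja--Dupuis--Maroulas for nonnegative functionals of a Poisson random measure: for every bounded continuous $F:\mathbb{U}\to\mathbb{R}$,
\begin{equation*}
-\varepsilon\log\bar{\mathbb{E}}\bigl[\exp\bigl(-\varepsilon^{-1}F(\mathcal{G}^\varepsilon(\varepsilon\eta^{\varepsilon^{-1}}))\bigr)\bigr]
=\inf_{\varphi\in\bar{\mathcal{A}}}\bar{\mathbb{E}}\Bigl[L_T(\varphi)+F\bigl(\mathcal{G}^\varepsilon(\varepsilon\eta^{\varepsilon^{-1}\varphi})\bigr)\Bigr].
\end{equation*}
The strategy is then standard: (a) show that the infimum on the right is asymptotically achieved by controls with $L_T$-cost uniformly bounded in $\varepsilon$, so that one may restrict to $\mathcal{U}^M$; (b) pass to the limit using Condition A; (c) identify the limit with $\inf_g\{L_T(g)+F(\mathcal{G}^0(\vartheta^g_T))\}$, which will give both Laplace bounds with rate function $I$.

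For the Laplace upper bound (i.e.\ the large deviation lower bound for open sets), fix $\delta>0$ and choose $g^*\in S$ with $L_T(g^*)+F(\mathcal{G}^0(\vartheta^{g^*}_T))\le \inf_g\{L_T(g)+F(\mathcal{G}^0(\vartheta^g_T))\}+\delta$. Plugging the deterministic control $\varphi_\varepsilon\equiv g^*\in\mathcal{U}^M$ (with $M=L_T(g^*)$) into the variational representation gives
\begin{equation*}
-\varepsilon\log\bar{\mathbb{E}}\bigl[e^{-\varepsilon^{-1}F(u^\varepsilon)}\bigr]\le L_T(g^*)+\bar{\mathbb{E}}\bigl[F(\mathcal{G}^\varepsilon(\varepsilon\eta^{\varepsilon^{-1}g^*}))\bigr],
\end{equation*}
and Condition A(ii) together with bounded continuity of $F$ gives $\mathcal{G}^\varepsilon(\varepsilon\eta^{\varepsilon^{-1}g^*})\Rightarrow\mathcal{G}^0(\vartheta^{g^*}_T)$, hence the limsup is at most $\inf_g\{L_T(g)+F(\mathcal{G}^0(\vartheta^g_T))\}+\delta$. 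For the Laplace lower bound, fix $\delta>0$ and a near-optimal sequence $\varphi_\varepsilon\in\bar{\mathcal{A}}$, which by a truncation argument (replacing $\varphi_\varepsilon$ by $\varphi_\varepsilon\wedge N$ off the set $\{L_T(\varphi_\varepsilon)\le 2\|F\|_\infty/\varepsilon+1\}$ and exploiting boundedness of $F$) may be taken in $\mathcal{U}^M$ for $M$ large enough depending on $\|F\|_\infty$. Compactness of $S^M$ (from the appendix of \cite{B-C-D}), already noted in the paper, allows one to extract a subsequence along which $\varphi_\varepsilon\Rightarrow\varphi\in\mathcal{U}^M$, and then Condition A(ii) yields $\mathcal{G}^\varepsilon(\varepsilon\eta^{\varepsilon^{-1}\varphi_\varepsilon})\Rightarrow\mathcal{G}^0(\vartheta^{\varphi}_T)$. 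By lower semicontinuity of $L_T$ under the topology of $S^M$ and Fatou's lemma one gets
\begin{equation*}
\liminf_{\varepsilon\to 0}\bar{\mathbb{E}}\bigl[L_T(\varphi_\varepsilon)+F(\mathcal{G}^\varepsilon(\varepsilon\eta^{\varepsilon^{-1}\varphi_\varepsilon}))\bigr]\ge \bar{\mathbb{E}}\bigl[L_T(\varphi)+F(\mathcal{G}^0(\vartheta^\varphi_T))\bigr]\ge \inf_{g\in S}\{L_T(g)+F(\mathcal{G}^0(\vartheta^g_T))\},
\end{equation*}
the last step by conditioning on $\varphi$ and applying the deterministic infimum pointwise.

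Finally, I would verify that $I$ defined by \eqref{equ-57} is a good rate function. For any $K<\infty$, the level set $\{I\le K\}$ equals $\{\mathcal{G}^0(\vartheta^g_T):g\in S^{K+1}\}$ (up to a harmless $+\delta$), which is the continuous image under the map $g\mapsto\mathcal{G}^0(\vartheta^g_T)$ of the compact set $S^{K+1}$; continuity of this map is exactly Condition A(i), and compactness of $S^{K+1}$ is the topological fact recalled above. Hence $\{I\le K\}$ is compact in $\mathbb{U}$.

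The main obstacle I anticipate is the truncation/tightness step in the Laplace lower bound: extracting from an almost-optimal sequence of random controls $\varphi_\varepsilon\in\bar{\mathcal{A}}$ a subsequence in some $\mathcal{U}^M$ whose joint law with $\mathcal{G}^\varepsilon(\varepsilon\eta^{\varepsilon^{-1}\varphi_\varepsilon})$ converges, so that Condition A(ii) can be applied in its joint form. This is handled by the standard Budhiraja--Dupuis--Maroulas truncation coupled with the compactness of $S^M$ and Skorokhod representation; the rest of the argument is a careful bookkeeping of weak convergence and the variational representation.
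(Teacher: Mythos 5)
This theorem is not proved in the paper at all: it is quoted verbatim as a known result of Budhiraja, Dupuis and Maroulas (reference \cite{B-D-M}), and the paper's only task is to verify Condition A for the specific model. Your sketch is a faithful outline of the standard proof from that reference (variational representation for functionals of a Poisson random measure, Laplace upper bound via a near-optimal deterministic control plus Condition A(ii), Laplace lower bound via truncation of near-optimal random controls into $\mathcal{U}^M$, compactness of $S^M$, lower semicontinuity of $L_T$ and Fatou, and goodness of $I$ from Condition A(i)); the one point that genuinely requires care, namely upgrading Condition A(ii) to joint convergence of $(\varphi_\varepsilon, \mathcal{G}^\varepsilon(\varepsilon\eta^{\varepsilon^{-1}\varphi_\varepsilon}))$ along a subsequence so that Fatou's lemma applies to the sum, is one you correctly flag and is handled in \cite{B-D-M} exactly as you describe. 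So there is nothing to compare against within this paper; your proposal is an acceptable reconstruction of the cited proof.
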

\subsection{Hypotheses and the statement of main results}
In order to obtain LDP for stochastic 3D tamed equations (\ref{equ-4}), we need additional conditions on the coefficients. Here, we adopt similar conditions as \cite{Y-Z-Z} and state some preliminary results from Budhiraja et al. \cite{B-C-D}.

Let ${\sigma}:[0,T]\times \mathbb{H}^0\times \mathbb{Z}\rightarrow \mathbb{H}^0$ ($[0,T]\times \mathbb{H}^1\times \mathbb{Z}\rightarrow \mathbb{H}^1$) be a measurable mapping. Set
\begin{eqnarray*}
 \|{\sigma}(t,z)\|_{0, \mathbb{H}^0}&:=&\sup_{u\in \mathbb{H}^0}\frac{\|{\sigma}(t,u,z)\|_{\mathbb{H}^0}}{1+\|u\|_{\mathbb{H}^0}},\quad (t,z)\in [0,T]\times \mathbb{Z},\\
  \|{\sigma}(t,z)\|_{1, \mathbb{H}^0}&:=&\sup_{u_1,u_2\in \mathbb{H}^0, u_1\neq u_2}\frac{\|{\sigma}(t,u_1,v)-{\sigma}(t,u_2,z)\|_{\mathbb{H}^0}}{\|u_1-u_2\|_{\mathbb{H}^0}},\quad (t,z)\in [0,T]\times \mathbb{Z},\\
  \|{\sigma}(t,z)\|_{0, \mathbb{H}^1}&:=&\sup_{u\in \mathbb{H}^1}\frac{\|{\sigma}(t,u,z)\|_{\mathbb{H}^1}}{1+\|u\|_{\mathbb{H}^1}}, \quad (t,z)\in [0,T]\times \mathbb{Z},\\
  \|{\sigma}(t,z)\|_{1, \mathbb{H}^1}&:=&\sup_{u_1,u_2\in \mathbb{H}^1, u_1\neq u_2}\frac{\|{\sigma}(t,u_1,z)-{\sigma}(t,u_2,z)\|_{\mathbb{H}^1}}{\|u_1-u_2\|_{\mathbb{H}^1}}, \quad (t,z)\in [0,T]\times \mathbb{Z},
  \end{eqnarray*}
\begin{description}
  \item[\textbf{Hypothesis H1} ] For $i=0,1$, $j=0,1$, there exists $\delta^{i,j}>0$ such that for all $E\in \mathcal{B}([0,T]\times \mathbb{Z})$ satisfying $\vartheta_T(E)<\infty$, the following holds
      \[
      \int_E e^{\delta^{i,j} \|\sigma(s,z)\|^2_{i,\mathbb{H}^j}}\vartheta(dz)ds<\infty.
      \]
\end{description}
Now, we state the following lemmas established by \cite{B-C-D} and \cite{Y-Z-Z}.
\begin{lemma}\label{lem-2}
Under \textbf{Hypothesis H0} and  \textbf{Hypothesis H1},
\begin{description}
  \item[(i)] For $i=0,1$, $j=0,1$ and every $M\in \mathbb{N}$,
  \begin{eqnarray}\label{equ-36}
  C^{M,1}_{i,j}&:=&\sup_{g\in S^M}\int_{\mathbb{Z}_T}\|\sigma(s,z)\|_{i,\mathbb{H}^j
  }|g(s,z)-1|\vartheta(dz)ds<\infty,\\
  \label{equ-37}
  C^{M,2}_{i,j}&:=&\sup_{g\in S^M}\int_{\mathbb{Z}_T}\|\sigma(s,z)\|^2_{i,\mathbb{H}^j
  }|g(s,z)+1|\vartheta(dz)ds<\infty.
  \end{eqnarray}
  \item[(ii)] For  $i=0,1$, $j=0,1$ and every $\eta>0$, there exists $\delta>0$ such that for any $A\subset [0,T]$ satisfying $\lambda_T(A)<\delta$
    \begin{eqnarray}\label{equ-38}
  \sup_{g\in S^M}\int_A\int_{\mathbb{Z}}\|\sigma(s,z)\|_{i,\mathbb{H}^j
  }|g(s,z)-1|\vartheta(dz)ds\leq \eta.
  \end{eqnarray}

\end{description}

\end{lemma}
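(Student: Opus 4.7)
The plan is to reduce both bounds to the Young-type inequality
\[ab\le e^a+l(b)\quad\text{for } a\in\mathbb{R},\ b\ge0,\]
which, after rescaling, yields $ab\le\delta^{-1}e^{\delta a}+\delta^{-1}l(b)$ for every $\delta>0$. Combined with the defining constraint $L_T(g)=\int_{\mathbb{Z}_T}l(g)\,d\vartheta_T\le M$ for $g\in S^M$ and the exponential integrability in Hypothesis~H1, this lets us trade $\|\sigma\|_{i,\mathbb{H}^j}$-integrability (which we do not have directly) against $l(g)$-integrability (which we do have uniformly).

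For (\ref{equ-36}), I would decompose $|g-1|=|g-1|\mathbf{1}_{\{g\le 2\}}+(g-1)\mathbf{1}_{\{g>2\}}$. On $\{g\le 2\}$ the elementary estimate $(g-1)^2\le C\,l(g)$ combined with Cauchy--Schwarz reduces the bound to $\bigl(\int\|\sigma\|_{i,\mathbb{H}^j}^2\mathbf{1}_{\{g\le 2\}}d\vartheta_T\bigr)^{1/2}(CM)^{1/2}$, and the first factor is finite by applying $x\le\delta^{-1}e^{\delta x}$ with $x=\|\sigma\|^2_{i,\mathbb{H}^j}$ together with Hypothesis~H1 (localizing to the finite-measure set $\{\|\sigma\|_{i,\mathbb{H}^j}\ge 1\}$, which has finite $\vartheta_T$-measure by Chebyshev). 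On $\{g>2\}$, set $a=\delta\|\sigma\|_{i,\mathbb{H}^j}$, $b=g$ in the Young inequality; the coarse bound $\delta\|\sigma\|\le\tfrac12+\tfrac{\delta^2}{2}\|\sigma\|^2$ brings $e^{\delta\|\sigma\|}$ under Hypothesis~H1, while $\int l(g)\mathbf{1}_{\{g>2\}}\,d\vartheta_T\le M$. For (\ref{equ-37}), take $a=\delta\|\sigma\|^2_{i,\mathbb{H}^j}$ and $b=g$ directly, yielding
\[\|\sigma\|^2_{i,\mathbb{H}^j}(g+1)\le 2\delta^{-1}e^{\delta\|\sigma\|^2_{i,\mathbb{H}^j}}+\delta^{-1}l(g),\]
where the ``$+1$'' piece is absorbed through $\|\sigma\|^2\le\delta^{-1}e^{\delta\|\sigma\|^2}$. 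Integration and $L_T(g)\le M$ together with Hypothesis~H1 then yield the uniform bound.

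Part (ii) follows from the same decomposition now applied on a subset $A\times\mathbb{Z}$. The exponential contribution $\int_A\int_{\mathbb{Z}}e^{\delta\|\sigma\|^2_{i,\mathbb{H}^j}}\vartheta(dz)\,ds$ is absolutely continuous in $A$ because its integrand is $L^1$ in $s$ by Hypothesis~H1; the entropy contribution $\delta^{-1}\int_A\int_{\mathbb{Z}}l(g)\,\vartheta(dz)\,ds$ requires uniform control over $g\in S^M$. The main obstacle I anticipate is precisely this interaction between the supremum over $S^M$ and the absolute continuity in time: the inequality $L_T(g)\le M$ is a global-in-time bound, and to localize it on a small set $A$ one must either invoke a de la Vall\'ee Poussin-type tightness argument for the family $\{l(g(\cdot,\cdot)):g\in S^M\}$ as in \cite{B-C-D}, or else further split $g\le K$ versus $g>K$ and use $g\le C\,l(g)/\log K$ for $g>K$ to show the tail is uniformly small in $K$, with the bulk part $g\le K$ then giving a bound proportional to $\lambda_T(A)$ after applying Cauchy--Schwarz one more time. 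Carefully tracking constants across these cuts is the delicate bookkeeping on which the proof hinges.
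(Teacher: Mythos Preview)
The paper does not give its own proof of this lemma; it is stated as a result established in \cite{B-C-D} and \cite{Y-Z-Z}. Your approach---the Fenchel--Young inequality $ab\le e^a-1+l(b)$ for the entropy function $l$, the decomposition $\{g\le 2\}$ versus $\{g>2\}$ with $(g-1)^2\le Cl(g)$ on the bounded piece, and for part~(ii) the further cut $g\le K$ versus $g>K$ exploiting $g\lesssim l(g)/\log K$ for large $g$---is exactly the standard argument carried out in those references.

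One remark on your Chebyshev step: you assert that $\{\|\sigma\|_{i,\mathbb{H}^j}\ge 1\}$ has finite $\vartheta_T$-measure, which presupposes $\int_{\mathbb{Z}_T}\|\sigma(s,z)\|^2_{i,\mathbb{H}^j}\,\vartheta_T(ds\,dz)<\infty$. Hypothesis~H1 as stated only controls $\int_E e^{\delta\|\sigma\|^2}\,d\vartheta_T$ on sets $E$ of \emph{finite} $\vartheta_T$-measure, so it does not by itself yield this global $L^2$ bound when $\vartheta_T(\mathbb{Z}_T)=\infty$. Note, however, that this $L^2$ integrability is \emph{necessary} for (\ref{equ-37}) to hold at all (take $g\equiv 1\in S^M$, giving $C^{M,2}_{i,j}\ge 2\int_{\mathbb{Z}_T}\|\sigma\|^2_{i,\mathbb{H}^j}\,d\vartheta_T$). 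In the cited sources it is an explicit standing assumption; here it is implicit in the statement of the lemma rather than a flaw in your strategy.
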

\begin{lemma}\label{lem-3}
\begin{description}
  \item[(1)] For any $g\in S$, if $\sup_{t\in[0,T]}\|Y(t)\|_{\mathbb{H}^1}<\infty$, then
  \[
  \int_{\mathbb{Z}}\sigma(\cdot,Y(\cdot),z)(g(\cdot,z)-1)\vartheta(dz)\in L^1([0,T];\mathbb{H}^1).
  \]
  \item[(2)] If the family of mappings $\{Y_n:[0,T]\rightarrow \mathbb{H}^1, n\geq 1\}$ satisfying $\sup_n\sup_{t\in[0,T]}\|Y_n(t)\|_{\mathbb{H}^1}<\infty$, then
      \[
      \tilde{C}_{M}:=\sup_{g\in S^M}\sup_n \int^T_0\|\int_{\mathbb{Z}}\sigma(t,Y_n(t),z)(g(t,z)-1)\vartheta(dz)\|_{\mathbb{H}^1}ds<\infty.
      \]
\end{description}
\end{lemma}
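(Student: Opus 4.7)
The plan is to reduce both parts to the uniform bound already established in Lemma \ref{lem-2}(i), specifically to the constant $C^{M,1}_{0,1}$, using a Minkowski-type inequality for Bochner integrals together with the growth bound encoded in $\|\sigma(t,z)\|_{0,\mathbb{H}^1}$.

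For part (1), I would first fix $M\geq 1$ so that $g\in S^M$, and then estimate pointwise in $t$ by
\[
\Big\|\int_{\mathbb{Z}}\sigma(t,Y(t),z)(g(t,z)-1)\vartheta(dz)\Big\|_{\mathbb{H}^1}\leq \int_{\mathbb{Z}}\|\sigma(t,Y(t),z)\|_{\mathbb{H}^1}\,|g(t,z)-1|\vartheta(dz).
\]
From the definition of $\|\sigma(t,z)\|_{0,\mathbb{H}^1}$ given just before Hypothesis H1, one has $\|\sigma(t,Y(t),z)\|_{\mathbb{H}^1}\leq \|\sigma(t,z)\|_{0,\mathbb{H}^1}(1+\|Y(t)\|_{\mathbb{H}^1})$. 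Integrating in $t$ over $[0,T]$ and pulling out the finite quantity $\sup_{t\in[0,T]}\|Y(t)\|_{\mathbb{H}^1}$ gives the majorant
\[
\Big(1+\sup_{t\in[0,T]}\|Y(t)\|_{\mathbb{H}^1}\Big)\int_0^T\int_{\mathbb{Z}}\|\sigma(t,z)\|_{0,\mathbb{H}^1}|g(t,z)-1|\vartheta(dz)\,dt,
\]
which is bounded by $(1+\sup_t\|Y(t)\|_{\mathbb{H}^1})\,C^{M,1}_{0,1}<\infty$ thanks to (\ref{equ-36}). This yields part (1).

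Part (2) then follows by applying exactly the same chain of inequalities uniformly in $n$: the only $n$-dependent quantity that enters is $\sup_{t\in[0,T]}\|Y_n(t)\|_{\mathbb{H}^1}$, which is bounded in $n$ by hypothesis. Taking supremum over $g\in S^M$ and over $n$ gives
\[
\tilde{C}_M \leq \Big(1+\sup_n\sup_{t\in[0,T]}\|Y_n(t)\|_{\mathbb{H}^1}\Big)\,C^{M,1}_{0,1}<\infty.
\]

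I do not expect any serious obstacle in this argument, as the lemma is essentially a packaging of the growth condition on $\sigma$ with the finiteness statement of Lemma \ref{lem-2}(i). The only mild technical point is verifying that the Bochner integral $\int_{\mathbb{Z}}\sigma(t,Y(t),z)(g(t,z)-1)\vartheta(dz)$ makes sense as an $\mathbb{H}^1$-valued function of $t$; this is immediate once the absolute integrability above is in hand, using the measurability of $\sigma$ in $z$ and the completeness of $\mathbb{H}^1$.
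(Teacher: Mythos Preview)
Your proposal is correct. The paper does not give its own proof of this lemma but instead cites \cite{B-C-D} and \cite{Y-Z-Z}; the argument you wrote is precisely the standard one underlying those references, reducing everything to the growth bound $\|\sigma(t,Y,z)\|_{\mathbb{H}^1}\leq \|\sigma(t,z)\|_{0,\mathbb{H}^1}(1+\|Y\|_{\mathbb{H}^1})$ and the constant $C^{M,1}_{0,1}$ from Lemma~\ref{lem-2}(i).
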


\begin{lemma}\label{lem-4}
Let $h:[0,T]\times \mathbb{Z}\rightarrow \mathbb{R}$ be a measurable function such that
\[
\int_{\mathbb{Z}_T}|h(s,z)|^2\vartheta(dz)ds<\infty,
\]
and for all $\delta\in (0,\infty)$ and $E\in \mathcal{B}([0,T]\times \mathbb{Z})$ satisfying $\vartheta_T(E)<\infty$,
\[
\int_E \exp(\delta |h(s,z)|)\vartheta(dz)ds<\infty.
\]
Then, we have
\begin{description}
  \item[(1)] Fix $M\in \mathbb{N}$. Let $g_n, g\in S^M$ be such that $g_n\rightarrow g$ as $n\rightarrow \infty$. Then
      \[
      \lim_{n\rightarrow \infty}\int_{\mathbb{Z}_T}h(s,z)(g_n(s,z)-1)\vartheta(dz)ds=\int_{\mathbb{Z}_T}h(s,z)(g(s,z)-1)\vartheta(dz)ds.
      \]
  \item[(2)] Fix $M\in \mathbb{N}$. Given $\varepsilon>0$, there exists a compact set $K_{\varepsilon}\subset \mathbb{Z}$, such that
      \[
      \sup_{g\in S^M}\int^T_0\int_{K^c_{\varepsilon}}|h(s,z)||g(s,z)-1|\vartheta(dz)ds\leq \varepsilon.
      \]
  \item[(3)] For every compact set $K\subset \mathbb{Z}$,
  \[
  \lim_{M\rightarrow \infty}\sup_{g\in S^M}\int^T_0\int_{K}|h(s,z)|I_{\{h\geq M\}}g(s,z)\vartheta(dz)ds=0.
  \]
\end{description}
\end{lemma}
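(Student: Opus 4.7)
The three conclusions all derive from the Young--Fenchel inequality tied to the rate function $l(y)=y\log y-y+1$, whose convex conjugate is $l^{\ast}(a)=e^{a}-1$. Specifically, $\sigma a\cdot b\le l(b)+e^{\sigma a}-1$ yields
\[
ab\le\frac{1}{\sigma}(e^{\sigma a}-1)+\frac{1}{\sigma}l(b),\qquad a\ge 0,\;b\ge 0,\;\sigma>0.
\]
Combined with the constraint $\int l(g)\,d\vartheta_T\le M$ for $g\in S^{M}$, and with the $\sigma$-finiteness of $\vartheta$ (which supplies an exhaustion $K_{n}\uparrow\mathbb{Z}$ by compacts), this is the only nontrivial tool needed; the $L^{2}$ and exponential integrability hypotheses on $h$ then feed the tail estimates.

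I would prove Part~(2) first, since Parts~(1) and~(3) lean on similar tail bounds. Fix $\varepsilon>0$ and write $|h|\,|g-1|\le|h|g+|h|$. Applying the Young inequality to $|h|g$,
\[
\int_{0}^{T}\!\!\int_{K_{n}^{c}}|h|g\,d\vartheta\,ds\le\frac{1}{\sigma}\int_{0}^{T}\!\!\int_{K_{n}^{c}}(e^{\sigma|h|}-1)\,d\vartheta\,ds+\frac{M}{\sigma}.
\]
Pick $\sigma$ so large that $M/\sigma<\varepsilon/3$; for that $\sigma$, split $K_{n}^{c}$ into $\{|h|\le R\}$ (where the integrand is bounded by $e^{\sigma R}$ and killed by $\vartheta_T$-smallness of the residual compact-exhaustion piece) and $\{|h|>R\}$ (controlled by $e^{\sigma|h|}\in L^{1}$ and dominated convergence), making the first summand $<\varepsilon/3$. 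The pure $|h|$ integral is handled by $|h|\mathbf{1}_{\{|h|\ge\sqrt{n}\}}\le n^{-1/2}|h|^{2}$ together with $h\in L^{2}(\vartheta_T)$. Setting $K_{\varepsilon}=K_{n}$ for $n$ large closes Part~(2). Part~(1) is then a truncation argument: write $h=h_{N}+r_{N}$ with $h_{N}=h\,\mathbf{1}_{\{|h|\le N\}}\mathbf{1}_{K_{N}}$; the residual $\int r_{N}(g_{n}-1)\,d\vartheta_T$ is uniformly small by Part~(2) and the $L^{2}$ bound, while for the bounded, compactly supported $h_{N}$ the topology of $S^{M}\subset\mathcal{M}_{FC}(\mathbb{Z}_T)$ (testing against $C_{c}$ functions) combined with a density step (using $g\le l(g)+e$ to bound the mass of $g\,d\vartheta_T$ on each compact uniformly in $g\in S^{M}$) yields $\int h_{N}g_{n}\to\int h_{N}g$. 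A standard $3\varepsilon$ estimate concludes.

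Part~(3) is the delicate piece and I expect it to be the main obstacle: the naive Young bound produces a summand $M/\sigma$ that cannot be absorbed as $M\to\infty$ for any single choice of $\sigma$. My plan is a two-regime decomposition $g=g\mathbf{1}_{\{g\le\sqrt{M}\}}+g\mathbf{1}_{\{g>\sqrt{M}\}}$. The bounded piece satisfies
\[
\int_{0}^{T}\!\!\int_{K}|h|\mathbf{1}_{\{|h|\ge M\}}g\,\mathbf{1}_{\{g\le\sqrt{M}\}}\,d\vartheta\,ds\le\sqrt{M}\cdot M^{-1}\|h\|_{L^{2}(\vartheta_T)}^{2}=\frac{\|h\|_{L^{2}}^{2}}{\sqrt{M}},
\]
using $|h|\mathbf{1}_{\{|h|\ge M\}}\le M^{-1}|h|^{2}$. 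The tail piece uses the entropy estimate $l(g)\ge\tfrac{1}{4}g\log M$ on $\{g>\sqrt{M}\}$ (valid for $M\ge e^{4}$), whence
\[
\int_{0}^{T}\!\!\int_{K}|h|\mathbf{1}_{\{|h|\ge M\}}g\,\mathbf{1}_{\{g>\sqrt{M}\}}\,d\vartheta\,ds\le\frac{4}{\log M}\int_{0}^{T}\!\!\int_{K}|h|\mathbf{1}_{\{|h|\ge M\}}l(g)\,d\vartheta\,ds.
\]
A uniform bound on this last integral over $g\in S^{M}$ is the technical heart of the lemma: I will absorb the $|h|\mathbf{1}_{\{|h|\ge M\}}$ factor by a second Young inequality against $l(g)$, using $\int_{K}e^{\sigma|h|}\mathbf{1}_{\{|h|\ge M\}}\,d\vartheta_T\to 0$ as $M\to\infty$ (dominated convergence with dominant $e^{\sigma|h|}\in L^{1}([0,T]\times K,\vartheta_T)$) together with $\int l(g)\le M$ once more. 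A careful calibration of the decomposition threshold $\sqrt{M}$ and the Young scale $\sigma=\sigma(M)$ is what I expect to require the bulk of the technical effort.
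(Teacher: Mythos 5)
First, a framing remark: the paper does not prove this lemma at all --- it is imported verbatim from \cite{B-C-D} and \cite{Y-Z-Z} (``Now, we state the following lemmas established by...''), so there is no in-paper argument to compare yours against, and I can only judge your proposal on its own terms. On those terms it has two genuine gaps. For Part~(2), the opening move $|h|\,|g-1|\le |h|g+|h|$ destroys the cancellation at $g=1$, and this is fatal: the residual term $\int_0^T\int_{K_n^c}|h|\,d\vartheta\,ds$ need not even be finite, since $h$ is only assumed to be in $L^2(\vartheta_T)$ and $\vartheta_T(K_n^c)$ is typically infinite (take $\mathbb{Z}=\mathbb{R}$, $\vartheta$ Lebesgue, $h(s,z)=(1+|z|)^{-1}$: both hypotheses hold, $\int_{K^c}|h|\,d\vartheta_T=\infty$ for every compact $K$, yet the left side of (2) is $0$ at $g\equiv 1$). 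Your bound $|h|\mathbf{1}_{\{|h|\ge\sqrt n\}}\le n^{-1/2}|h|^2$ treats only the large-$|h|$ part, and your claim that the $\{|h|\le R\}$ piece of $K_n^c$ is ``killed by $\vartheta_T$-smallness of the residual compact-exhaustion piece'' is false precisely because $\vartheta_T(K_n^c)$ is not small; the same defect makes $\int_{K_n^c}(e^{\sigma|h|}-1)\,d\vartheta_T\ge\sigma\int_{K_n^c\cap\{0<|h|\le 1\}}|h|\,d\vartheta_T$ possibly infinite. The repair (and this is how \cite{B-C-D} and \cite{Z-Z} actually argue) must keep $|g-1|$ intact: split into $\{|g-1|\le 1\}$, where $|g-1|^2\le C\,l(g)$ so Cauchy--Schwarz with $h\in L^2(\vartheta_T)$ gives a bound $\|h\mathbf{1}_{K^c}\|_{L^2}\sqrt{CM}\to 0$, and $\{|g-1|>1\}$, where $l(g)\ge l(2)>0$ forces $\vartheta_T(\{|g-1|>1\})\le M/l(2)$ uniformly in $g\in S^M$ and $g\le C\,l(g)$; only then do your small-$|h|$/large-$|h|$ split and the exponential integrability close the argument. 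Part~(1) inherits this gap through its reliance on Part~(2).

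For Part~(3), your bounded regime is correct, but the tail regime reduces to a uniform bound on $\int_K|h|\mathbf{1}_{\{|h|\ge M\}}l(g)\,d\vartheta_T$ over $g\in S^M$, and the proposed ``second Young inequality against $l(g)$'' does not exist: the conjugacy pairs $|h|$ with $g$, not with $l(g)$, and $\int l(l(g))\,d\vartheta_T$ is not controlled by $\int l(g)\,d\vartheta_T\le M$. In fact no calibration can rescue the statement when the two $M$'s coincide. Take $\mathbb{Z}=\mathbb{R}$, $\vartheta$ Lebesgue, $K=[0,1]$, $h(s,z)=\sqrt{\log(1/z)}\,\mathbf{1}_{[0,1]}(z)$ (both hypotheses hold), $A_M=[0,T]\times[0,e^{-M^2}]$ so that $\vartheta_T(A_M)=Te^{-M^2}$, and $g_M=\beta_M$ on $A_M$, $g_M=1$ elsewhere, with $\beta_M=e^{M^2}/(TM)$; then $L_T(g_M)=l(\beta_M)\,Te^{-M^2}\le M$ for large $M$, while $\int_0^T\int_K|h|\mathbf{1}_{\{h\ge M\}}g_M\,d\vartheta\,ds\ge M\beta_M\,Te^{-M^2}=1$. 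So the assertion, read literally, fails. The version that is true --- and the only one used in the paper --- fixes the level $N$ of $S^N$ and sends the truncation level $M$ to infinity independently; in that reading a single application of $ab\le e^{\sigma a}+\sigma^{-1}l(b)$ gives the bound $\int_K e^{\sigma|h|}\mathbf{1}_{\{|h|\ge M\}}\,d\vartheta_T+N/\sigma$, the first term vanishes by dominated convergence as $M\to\infty$, and one then lets $\sigma\to\infty$; no two-regime decomposition is needed.
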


In this paper, we consider the following stochastic 3D tamed equations driven by small multiplicative L\'{e}vy noise:
\begin{eqnarray}\label{equ-39}
   du^\varepsilon(t)=-[A u^\varepsilon(t)+B(u^\varepsilon(t))+\mathcal{P}g_N(|u^\varepsilon(t)|^2)u^\varepsilon(t)]dt
+\varepsilon\int_{\mathbb{Z}}\sigma(t-,u^\varepsilon(t-),z)\tilde{\eta}^{\varepsilon^{-1}}(dt,dz).
\end{eqnarray}
%with $u_0 \in L^p(\Omega,L^p(\mathbb{T}^d))$, for all $p\in[1,\infty)$.
By Theorem \ref{thm-1}, under \textbf{Hypothesis H0}, there exists a unique strong solution of (\ref{equ-39}) in $\mathcal{D}([0,T];\mathbb{H}^1)\cap L^2([0,T];\mathbb{H}^2)$. Therefore, there exists a Borel-measurable mapping:
\[
\mathcal{G}^{\varepsilon}: \bar{\mathbb{M}}\rightarrow \mathcal{D}([0,T];\mathbb{H}^1) \cap L^2([0,T];\mathbb{H}^2)
\]
such that $u^{\varepsilon}(\cdot)=\mathcal{G}^{\varepsilon}(\varepsilon {\eta}^{\varepsilon^{-1}})$.

For $g\in S$, consider the following skeleton equation
\begin{eqnarray}\label{equ-40}
    du^g(t)=-[A u^g(t)+B(u^g(t))+\mathcal{P}g_N(|u^g(t)|^2)u^g(t)]dt+\int_{\mathbb{Z}}\sigma(t,u^g(t),z)(g(t,z)-1)\vartheta(dz)dt.
\end{eqnarray}
The solution $u^g$ defines a mapping $\mathcal{G}^0: \bar{\mathbb{M}}\rightarrow \mathcal{D}([0,T];\mathbb{H}^1) \cap L^2([0,T];\mathbb{H}^2)$ such that  $u^g(\cdot)=\mathcal{G}^0(\vartheta^g_T)$.

Our main result reads as
\begin{thm}\label{thm-3}
Let $u_0 \in \mathbb{H}^1$. Under \textbf{Hypothesis H0} and \textbf{Hypothesis H1}, $u^{\varepsilon}$ satisfies a large deviation principle on $\mathcal{D}([0,T]; \mathbb{H}^1)$ with the good rate function $I$ defined by (\ref{equ-57}) with respect to the uniform convergence.
\end{thm}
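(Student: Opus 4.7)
The plan is to invoke the Budhiraja--Dupuis--Maroulas criterion (the general LDP theorem recalled above), so the entire task reduces to verifying \textbf{Condition A}(i) and (ii) for the maps $\mathcal{G}^\varepsilon$ and $\mathcal{G}^0$ associated with \eqref{equ-39} and \eqref{equ-40}. The well-posedness of the skeleton equation \eqref{equ-40} and the uniform-in-$g\in S^M$ a priori estimates
\[
\sup_{g\in S^M}\Big(\sup_{t\in[0,T]}\|u^g(t)\|^2_{\mathbb{H}^1}+\int_0^T\|u^g(t)\|^2_{\mathbb{H}^2}\,dt\Big)\leq C(M,T,u_0)
\]
will be taken from Section~5. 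The analogous estimate for the controlled SPDE
\[
du^{\varepsilon,\varphi_\varepsilon}=F(u^{\varepsilon,\varphi_\varepsilon})\,dt+\varepsilon\int_{\mathbb{Z}}\sigma(t-,u^{\varepsilon,\varphi_\varepsilon}(t-),z)\tilde\eta^{\varepsilon^{-1}\varphi_\varepsilon}(dt,dz)+\int_{\mathbb{Z}}\sigma(t,u^{\varepsilon,\varphi_\varepsilon}(t),z)(\varphi_\varepsilon(t,z)-1)\vartheta(dz)\,dt,
\]
obtained from It\^o's formula (Lemma~\ref{lem-1}) applied to $\|\cdot\|^2_{\mathbb{H}^1}$ and $\|\cdot\|^6_{\mathbb{H}^1}$ and controlled uniformly in $\varepsilon\in(0,1]$, $\varphi_\varepsilon\in\mathcal{U}^M$ via Lemma~\ref{lem-2}--\ref{lem-3} and the Burkholder--Davis--Gundy inequality, provides the parallel bounds for the stochastic side.

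For Condition A(i), given $g_n\to g$ in $S^M$, let $w_n:=u^{g_n}-u^g$. Taking the difference of \eqref{equ-40}, pairing in $\mathbb{H}^1$ and invoking the local monotonicity inequality \eqref{equ-55}, I would obtain
\[
\tfrac{d}{dt}\|w_n\|^2_{\mathbb{H}^1}+\tfrac12\|w_n\|^2_{\mathbb{H}^2}\leq C\bigl(1+\|u^{g_n}\|^4_{\mathbb{H}^1}+\|u^g\|^4_{\mathbb{H}^1}+\|u^g\|^2_{\mathbb{H}^2}\bigr)\|w_n\|^2_{\mathbb{H}^1}+2\langle w_n,\mathcal{I}_n(t)\rangle_{\mathbb{H}^1},
\]
where $\mathcal{I}_n(t)=\int_{\mathbb{Z}}[\sigma(t,u^{g_n},z)(g_n-1)-\sigma(t,u^g,z)(g-1)]\vartheta(dz)$. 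Splitting $\mathcal{I}_n$ into a Lipschitz piece (absorbed by Gronwall through \eqref{equ-6}) and a residual piece $\int_{\mathbb{Z}}\sigma(t,u^g,z)(g_n-g)\vartheta(dz)$ whose integrated effect vanishes by Lemma~\ref{lem-4}(1)--(2) together with the compactness of $S^M$, the exponential Gronwall inequality (using that $\int_0^T\|u^g\|^2_{\mathbb{H}^2}\,dt<\infty$) yields $\sup_{t\in[0,T]}\|w_n(t)\|^2_{\mathbb{H}^1}+\int_0^T\|w_n(t)\|^2_{\mathbb{H}^2}\,dt\to 0$, which is much stronger than convergence in $\mathcal{D}([0,T];\mathbb{H}^1)$.

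For Condition A(ii), let $X^\varepsilon=\mathcal{G}^\varepsilon(\varepsilon\eta^{\varepsilon^{-1}\varphi_\varepsilon})$. I would first establish tightness of the laws of $(X^\varepsilon,\varphi_\varepsilon)$ on $\mathcal{D}([0,T];D(A^{-\alpha}))\times S^M$ for some $\alpha>0$ (so that the embedding $\mathbb{H}^1\hookrightarrow D(A^{-\alpha})$ is compact), using the uniform $\mathbb{H}^1$-bounds together with an Aldous-type criterion for the càdlàg part controlled by the Poisson integral term (which contributes $O(\varepsilon)$ in the limit) and the uniform equicontinuity of the drift and the controlled drift. Applying the Skorohod representation theorem on a new probability space, I pass to a.s.\ limits $(\tilde X^\varepsilon,\tilde\varphi_\varepsilon)\to(\tilde X,\tilde\varphi)$, identify $\tilde\varphi\in S^M$, and show that $\tilde X$ solves the skeleton equation with $g=\tilde\varphi$ by passing to the limit in the mild or variational formulation; the nonlinear terms are handled via the uniform $L^2([0,T];\mathbb{H}^2)$ bound combined with a Minty--Browder-type monotonicity argument in the spirit of Step~1 of Theorem~\ref{thm-1}, and the controlled jump integral $\int_0^\cdot\int_{\mathbb{Z}}\sigma(s,\tilde X^\varepsilon,z)(\tilde\varphi_\varepsilon-1)\vartheta(dz)\,ds$ converges to $\int_0^\cdot\int_{\mathbb{Z}}\sigma(s,\tilde X,z)(\tilde\varphi-1)\vartheta(dz)\,ds$ by Lemma~\ref{lem-4}. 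Uniqueness from Section~5 then gives $\tilde X=u^{\tilde\varphi}$, i.e.\ $\mathcal{G}^0(\vartheta^{\tilde\varphi}_T)$, and finally energy-equality arguments applied to $\|\tilde X^\varepsilon-\tilde X\|^2_{\mathbb{H}^1}$ (as in the proof of (i)) upgrade the convergence from $D(A^{-\alpha})$ to $\mathcal{D}([0,T];\mathbb{H}^1)$.

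The principal obstacle is this last upgrade: in three dimensions one cannot rely on the skew-symmetry trick that works in 2D, so controlling $\|\tilde X^\varepsilon-\tilde X\|^2_{\mathbb{H}^1}$ requires the full strength of the taming inequality \eqref{equ-55} together with the uniform sixth-moment bound \eqref{equ-13} for $\tilde X^\varepsilon$ in $\mathbb{H}^1$. Because the coefficient $(1+\|\tilde X^\varepsilon\|^4_{\mathbb{H}^1}+\|\tilde X\|^4_{\mathbb{H}^1}+\|\tilde X\|^2_{\mathbb{H}^2})$ in the Gronwall estimate is only random-integrable and not bounded, a localization by stopping times $\tau_R:=\inf\{t:\|\tilde X^\varepsilon(t)\|_{\mathbb{H}^1}\vee\|\tilde X(t)\|_{\mathbb{H}^1}>R\}$ followed by $R\to\infty$ (as in the uniqueness step of Theorem~\ref{thm-1}) will be essential, and care must be taken that the jump term $\varepsilon\int_{\mathbb{Z}}\sigma\,\tilde\eta^{\varepsilon^{-1}\varphi_\varepsilon}$ contributes vanishingly in $L^2$ norm uniformly on $\varphi_\varepsilon\in\mathcal{U}^M$, which follows from the BDG inequality and Hypothesis~H1.
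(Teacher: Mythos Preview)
Your treatment of Condition~A(i) is essentially the paper's argument: the paper (Proposition~\ref{prp-1}) also ends with the energy estimate on $Z^{m'}=u^{g_{m'}}-u^g$ using \eqref{equ-55}, splits the forcing term exactly as you do, and closes with Gronwall. (The paper precedes this by an Aubin--Lions compactness step to identify the limit, but since the skeleton equation has a unique solution this detour is not needed; your direct route is equivalent.)

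For Condition~A(ii), however, you are missing the device that makes the paper's proof work. After Skorohod you write ``energy-equality arguments applied to $\|\tilde X^\varepsilon-\tilde X\|^2_{\mathbb{H}^1}$'' and ``the jump term $\varepsilon\int_{\mathbb{Z}}\sigma\,\tilde\eta^{\varepsilon^{-1}\varphi_\varepsilon}$ contributes vanishingly''. But on the new probability space you only carried over $(\tilde X^\varepsilon,\tilde\varphi_\varepsilon)$; there is no Poisson random measure $\tilde\eta$ there, so you cannot write down the SPDE for $\tilde X^\varepsilon$, apply It\^o's formula to the difference, or invoke BDG. The paper (Theorem~\ref{thm-4}) avoids this by introducing, \emph{before} Skorohod, the auxiliary process $\tilde Y^\varepsilon$ solving the linear equation \eqref{equ-67}
\[
d\tilde Y^\varepsilon = -A\tilde Y^\varepsilon\,dt + \varepsilon\int_{\mathbb Z}\sigma(t,\tilde u^\varepsilon(t-),z)\bigl(\eta^{\varepsilon^{-1}\varphi_\varepsilon}(dt,dz)-\varepsilon^{-1}\varphi_\varepsilon\vartheta(dz)dt\bigr),
\]
which absorbs the entire martingale part and satisfies $\tilde Y^\varepsilon\to 0$ in $\mathcal D([0,T];\mathbb H^1)\cap L^2([0,T];\mathbb H^2)$ by \eqref{equ-69}. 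The difference $Z^\varepsilon=\tilde u^\varepsilon-\tilde Y^\varepsilon$ then satisfies a \emph{deterministic} integral equation (pathwise identity), which survives Skorohod intact. On the new space one compares $Z^\varepsilon(\omega^1,\cdot)$ with the skeleton solution $\hat u$ exactly as in Proposition~\ref{prp-1}; no stochastic calculus, no stopping times, no Minty--Browder are needed there. Your proposal can be repaired either by adding $\tilde Y^\varepsilon$ to the Skorohod tuple (which is what the paper does) or by also transporting the Poisson random measure through an extended Skorohod theorem, but as written the upgrade step is not justified.
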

\begin{proof}
According to Theorem \ref{thm-1}, we need to prove (i) and (ii) in \textbf{Condition A}. The verification of (i) will be established by Proposition \ref{prp-1}, (ii) will be proved by Theorem \ref{thm-4}.
\end{proof}

\section{The skeleton equation}
%Throughout this paper, we assume \textbf{Hypothesis H0} holds.
In this section, we will show that the skeleton equation (\ref{equ-40}) admits a unique solution for every $g\in S$.

Let $K$ be a Banach space with norm $\|\cdot\|_K$.
Given $p>1, \alpha\in (0,1)$, as in \cite{FG95}, let $W^{\alpha,p}([0,T]; K)$ be the Sobolev space of all $u\in L^p([0,T];K)$ such that
\[
\int^T_0\int^T_0\frac{\|u(t)-u(s)\|_K^{ p}}{|t-s|^{1+\alpha p}}dtds< \infty,
\]
endowed with the norm
\[
\|u\|^p_{W^{\alpha,p}([0,T]; K)}=\int^T_0\|u(t)\|_K^pdt+\int^T_0\int^T_0\frac{\|u(t)-u(s)\|_K^{ p}}{|t-s|^{1+\alpha p}}dtds.
\]
The following results can be found in \cite{FG95}.
\begin{lemma}\label{lem-5}
Let $B_0\subset B\subset B_1$ be Banach spaces, $B_0$ and $B_1$ reflexive, with compact embedding $B_0\subset B$. Let $p\in (1, \infty)$ and $\alpha \in (0, 1)$ be given. Let $X$ be the space
\[
X= L^p([0, T]; B_0)\cap W^{\alpha, p}([0,T]; B_1),
\]
endowed with the natural norm. Then the embedding of $X$ in $L^p([0,T];B)$ is compact.
\end{lemma}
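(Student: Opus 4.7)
The plan is to apply the vector-valued Fréchet--Kolmogorov compactness criterion to a bounded sequence $\{u_n\} \subset X$, which requires (a) uniform boundedness in $L^p([0,T]; B)$, immediate from the continuous embedding $B_0 \subset B$; (b) equi-continuity of translations $\|u_n(\cdot + h) - u_n(\cdot)\|_{L^p(B)} \to 0$ uniformly in $n$ as $h \to 0$; and (c) that the time-mollifications of $\{u_n\}$ take values in a relatively compact subset of $B$, which follows from the uniform $L^p(B_0)$ bound together with the compact embedding $B_0 \hookrightarrow B$.

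The decisive tool for (b) is Ehrling's interpolation inequality: since $B_0 \hookrightarrow B$ compactly and $B \hookrightarrow B_1$ continuously, for every $\eta > 0$ there exists $C_\eta > 0$ such that $\|v\|_B \leq \eta \|v\|_{B_0} + C_\eta \|v\|_{B_1}$ for all $v \in B_0$. Applied to $v = u_n(t+h) - u_n(t)$, raised to power $p$, and integrated in $t$, the $B_0$-term contributes something of order $\eta^p \|u_n\|_{L^p(B_0)}^p$, which is made arbitrarily small uniformly in $n$ by choosing $\eta$ small first. To control the $B_1$-term, I would establish the fractional Sobolev translation estimate
\[
\int_0^{T-h} \|u(t+h) - u(t)\|_{B_1}^p \, dt \leq C_p \, h^{\alpha p} \, [u]_{W^{\alpha,p}([0,T]; B_1)}^p,
\]
whose proof writes $h\|u(t+h) - u(t)\|^p = \int_t^{t+h}\|u(t+h) - u(t)\|^p \, ds$, inserts the triangle inequality $\|u(t+h) - u(t)\| \leq \|u(t+h) - u(s)\| + \|u(s) - u(t)\|$, and then recognizes the Slobodeckij double integral on the right-hand side after multiplying and dividing by $|t+h - s|^{1+\alpha p}$ (respectively $|s - t|^{1+\alpha p}$), both of which are bounded above by $h^{1+\alpha p}$.

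Once (a), (b), and (c) are verified, the Fréchet--Kolmogorov theorem for Banach-valued $L^p$ functions furnishes a subsequence converging strongly in $L^p([0,T]; B)$. The main obstacle is the translation estimate in $L^p(B_1)$: it does not follow mechanically from the definition of the Slobodeckij seminorm, and the symmetrization step above (inserting an intermediate point $s$) is essential. The pointwise condition in (c) is handled by a standard decomposition $u_n = \rho_\epsilon * u_n + (u_n - \rho_\epsilon * u_n)$, where the smoothed piece is pointwise relatively compact in $B$ via Rellich-type compactness coming from the compact embedding $B_0 \hookrightarrow B$, and the remainder is small in $L^p(B)$ thanks to the translation estimate combined with Ehrling. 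Everything else is routine packaging of these ingredients.
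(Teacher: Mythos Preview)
Your proposal is a valid outline of a proof, essentially following Simon's compactness argument (Ehrling interpolation combined with a fractional translation estimate and a mollification/Arzel\`a--Ascoli step). The paper, however, does not prove this lemma at all: it simply records it as a known result and cites Flandoli--Gatarek \cite{FG95} (see the sentence ``The following results can be found in \cite{FG95}'' immediately preceding the lemma). So there is nothing to compare at the level of argument---you have supplied a proof where the paper only gives a reference.

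One small point worth tightening in your write-up: the vector-valued Fr\'echet--Kolmogorov criterion does not come in a single canonical form, and condition (c) as you state it (``time-mollifications take values in a relatively compact subset of $B$'') is really the heart of the Simon argument rather than a hypothesis of an off-the-shelf theorem. What you actually do in the last paragraph---split $u_n = \rho_\epsilon * u_n + (u_n - \rho_\epsilon * u_n)$, use Arzel\`a--Ascoli on the mollified piece via the compact embedding $B_0 \hookrightarrow B$, and control the remainder by the translation estimate plus Ehrling---is the full proof, not merely a verification of hypotheses. Presenting it that way would make the logical structure cleaner.
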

\begin{lemma}\label{lem-6}
For $V$ and $H$ are two Hilbert spaces ($V'$ is the dual space of $V$) with $V\subset\subset H=H'\subset V'$, where $V\subset\subset H$ denotes $V$ is compactly embedded in $H$. If $u\in L^2([0,T];V)$, $\frac{du}{dt}\in L^2([0,T];V')$, then $u\in C([0,T];H)$.
\end{lemma}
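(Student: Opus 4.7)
The plan is to recognize this as the classical Lions--Magenes embedding result and prove it via an energy identity combined with the Hilbert-space fact that weak convergence plus norm convergence implies strong convergence. First, I would note that $u \in L^2([0,T]; V')$ (using $V \hookrightarrow V'$) together with $\frac{du}{dt} \in L^2([0,T]; V')$ places $u$ in $W^{1,2}([0,T]; V')$, which embeds continuously into $C([0,T]; V')$. Hence $u(t) \in V'$ is well-defined for every $t \in [0,T]$ and depends continuously on $t$ in the $V'$ topology.

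The crucial step is to establish the energy identity
\[
\|u(t)\|_H^2 - \|u(s)\|_H^2 = 2 \int_s^t \langle u'(\tau), u(\tau)\rangle_{V', V}\, d\tau
\]
for a.e.\ $s, t \in [0,T]$. To prove this, I would first extend $u$ to an element of $L^2(\mathbb{R}; V)$ whose distributional time derivative lies in $L^2(\mathbb{R}; V')$ via reflection across the endpoints followed by a smooth cutoff. Mollifying in time by $u_\varepsilon := u * \rho_\varepsilon$ yields $u_\varepsilon \in C^\infty(\mathbb{R}; V)$, for which the classical chain rule gives $\frac{d}{dt}\|u_\varepsilon(t)\|_H^2 = 2\langle u_\varepsilon'(t), u_\varepsilon(t)\rangle_{V', V}$. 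Integrating from $s$ to $t$ and passing to the limit $\varepsilon \to 0^+$, using $u_\varepsilon \to u$ in $L^2([0,T]; V)$, $u_\varepsilon' \to u'$ in $L^2([0,T]; V')$, and the Cauchy--Schwarz bound $|\langle u_\varepsilon', u_\varepsilon\rangle_{V',V}| \leq \|u_\varepsilon'\|_{V'} \|u_\varepsilon\|_V$, delivers the identity for $u$ at Lebesgue points of both sides.

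Finally, the right-hand side of the energy identity is absolutely continuous in $t$, so $t \mapsto \|u(t)\|_H^2$ admits a continuous representative, in particular one that is uniformly bounded on $[0,T]$; I would then replace $u$ by its corresponding $H$-valued representative. Combined with continuity of $u$ in $V'$ and the uniform $H$-bound, this yields weak continuity of $u$ into $H$: any weak $H$-cluster point of $u(t_n)$ as $t_n \to t_0$ must coincide with $u(t_0)$ in $V'$, hence in $H$. Since $H$ is a Hilbert space, weak convergence $u(t_n) \rightharpoonup u(t_0)$ together with $\|u(t_n)\|_H \to \|u(t_0)\|_H$ forces strong convergence, giving $u \in C([0,T]; H)$. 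The main technical subtlety is the rigorous justification of the chain rule via the mollification argument, especially handling the duality pairing uniformly in $\varepsilon$; I would also note that compactness of the embedding $V \subset\subset H$ is not actually invoked here---the continuous embedding alone suffices for this continuity statement, with compactness being reserved for Aubin--Lions type results as in Lemma \ref{lem-5}.
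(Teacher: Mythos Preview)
Your proof is correct and follows the classical Lions--Magenes argument (energy identity via mollification, then weak continuity plus norm convergence gives strong continuity in the Hilbert space $H$). The paper, however, does not supply its own proof of this lemma: it merely states the result and refers the reader to \cite{FG95} (see the sentence ``The following results can be found in \cite{FG95}'' preceding Lemmas~\ref{lem-5} and~\ref{lem-6}). So your write-up goes well beyond what the paper provides.

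Your closing observation is also accurate and worth keeping: the compact embedding $V\subset\subset H$ is not used anywhere in the continuity argument --- only the continuous chain $V\hookrightarrow H\hookrightarrow V'$ is needed. Compactness enters only in the Aubin--Lions statement (Lemma~\ref{lem-5}), and the paper's hypothesis here is simply inherited from the setting in which both lemmas are quoted together.
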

For the skeleton equation (\ref{equ-40}), we have
\begin{thm}\label{thm-2}
Given $u_0\in \mathbb{H}^1 $ and $g\in S$. Assume \textbf{Hypothesis H0} and \textbf{Hypothesis H1} hold, then there exists a unique solution $u^g$ such that
\[
u^g\in C([0,T];\mathbb{H}^1)\cap L^2([0,T];\mathbb{H}^2),
\]
and
\begin{eqnarray}\notag
u^g(t)&=&u_0-\int^t_0 A u^g(s)ds-\int^t_0 B( u^g(s))ds-\int^t_0 \mathcal{P}g_N( |u^g(s)|^2)u^g(s)ds\\
\label{equ-41}
&&\quad +\int^t_0\int_{\mathbb{Z}}\sigma(s,u^g(s),z)(g(s,z)-1)\vartheta(dz)ds\quad {\rm{holds\ \ in}} \quad L^2([0,T];\mathbb{H}^{0}).
\end{eqnarray}
Moreover, for any $M\in \mathbb{N}$, there exists $C(p,M)>0$ such that
\begin{eqnarray}\label{equ-42}
\sup_{g\in S^M} \left(\sup_{s\in [0,T]}\|u^g(s)\|^2_{\mathbb{H}^1}+\int^T_0\|u^g(s)\|^{2}_{\mathbb{H}^2}ds\right)\leq C(p,M).
\end{eqnarray}

\end{thm}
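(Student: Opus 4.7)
The plan is to mirror the Galerkin scheme from the proof of Theorem \ref{thm-1}, but in the deterministic setting: the stochastic integral is replaced by the drift $\int_{\mathbb{Z}}\sigma(t,u^g,z)(g(t,z)-1)\vartheta(dz)\,dt$, and the martingale estimates are replaced by the uniform-in-$g$ integrability bounds of Lemmas \ref{lem-2}--\ref{lem-4}. Because we are in the deterministic setting, classical Aubin--Lions compactness (Lemma \ref{lem-5}) replaces the Minty--Browder monotonicity argument used in Theorem \ref{thm-1}. Concretely, I would first project onto $\mathbb{H}_n:=\mathrm{span}\{e_1,\dots,e_n\}$ and solve the ODE
\begin{equation*}
\frac{du_n^g}{dt}=\Pi_n F(u_n^g)+\Pi_n\!\int_{\mathbb{Z}}\sigma(t,u_n^g,z)(g(t,z)-1)\vartheta(dz),\qquad u_n^g(0)=\Pi_n u_0.
\end{equation*}
As noted in the proof of Theorem \ref{thm-1}, $\Pi_nF$ is locally Lipschitz of linear growth in $\mathbb{H}^0$; the control drift is Lipschitz in $u$ by (\ref{equ-6-1}) and integrable in $t$ by Lemma \ref{lem-3}(1). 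Hence a unique absolutely continuous solution $u_n^g$ exists on $[0,T]$.

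Next I would derive the key a priori estimate. Applying the chain rule to $\|u_n^g(t)\|^2_{\mathbb{H}^1}$, bounding the $F$-term via (\ref{equ-7}), and using the linear growth $\|\sigma(s,u,z)\|_{\mathbb{H}^1}\leq \|\sigma(s,z)\|_{0,\mathbb{H}^1}(1+\|u\|_{\mathbb{H}^1})$ together with Cauchy--Schwarz and Young's inequality, one obtains
\begin{equation*}
\|u_n^g(t)\|^2_{\mathbb{H}^1}+\int_0^t\|u_n^g(s)\|^2_{\mathbb{H}^2}ds\leq C\|u_0\|^2_{\mathbb{H}^1}+C\int_0^t(1+h_g(s))\bigl(1+\|u_n^g(s)\|^2_{\mathbb{H}^1}\bigr)ds,
\end{equation*}
where $h_g(s):=\int_{\mathbb{Z}}\|\sigma(s,z)\|_{0,\mathbb{H}^1}|g(s,z)-1|\vartheta(dz)$. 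Lemma \ref{lem-2}(i) with $i=0$, $j=1$ gives $\int_0^T h_g(s)\,ds\leq C^{M,1}_{0,1}$ uniformly in $g\in S^M$, so Gronwall yields (\ref{equ-42}) uniformly in $n$ and $g\in S^M$.

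With the bounds in hand, one bounds $du_n^g/dt$ in $L^2(0,T;D(A^{-\alpha}))$ for some $\alpha>0$ using Sobolev estimates on $B(u_n^g)$ and $g_N(|u_n^g|^2)u_n^g$ and Lemma \ref{lem-3}(2). Lemma \ref{lem-5} then extracts a subsequence converging strongly in $L^2(0,T;\mathbb{H}^1)$, weak-$\star$ in $L^\infty(0,T;\mathbb{H}^1)$, and weakly in $L^2(0,T;\mathbb{H}^2)$ to a limit $u^g$. Strong $L^2(0,T;\mathbb{H}^1)$ convergence suffices for $B(u_n^g)\to B(u^g)$ and $g_N(|u_n^g|^2)u_n^g\to g_N(|u^g|^2)u^g$ by the usual bilinear-form estimates and boundedness of $g_N$; for the control drift, the Lipschitz bound (\ref{equ-6-1}) combined with Lemma \ref{lem-2}(i) and dominated convergence gives
\begin{equation*}
\int_0^T\!\int_{\mathbb{Z}}\|\sigma(s,u_n^g,z)-\sigma(s,u^g,z)\|_{\mathbb{H}^0}|g-1|\vartheta(dz)ds\to 0,
\end{equation*}
so $u^g$ satisfies (\ref{equ-41}). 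Uniqueness is obtained by applying the chain rule to $\|u_1-u_2\|^2_{\mathbb{H}^0}$ for two solutions, invoking local monotonicity (\ref{equ-18}) and the Lipschitz estimate on $\sigma$ to get
\begin{equation*}
\tfrac{d}{dt}\|u_1-u_2\|^2_{\mathbb{H}^0}\leq\Bigl[2C_0(\|u_2\|_{\mathbb{H}^1}\|u_2\|_{\mathbb{H}^2}+1)+2\!\int_{\mathbb{Z}}\|\sigma(s,z)\|_{1,\mathbb{H}^0}|g-1|\vartheta(dz)\Bigr]\|u_1-u_2\|^2_{\mathbb{H}^0};
\end{equation*}
the bracket lies in $L^1(0,T)$ by (\ref{equ-42}) and Lemma \ref{lem-2}(i), so Gronwall forces $u_1=u_2$. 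Finally, $u^g\in C([0,T];\mathbb{H}^1)$ comes from $u^g\in L^2(0,T;\mathbb{H}^2)$ and $du^g/dt\in L^2(0,T;\mathbb{H}^0)$ via Lemma \ref{lem-6}.

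The main technical obstacle is closing the $\mathbb{H}^1$ a priori estimate uniformly over $g\in S^M$. A naive Cauchy--Schwarz fails because $\int|g-1|^2\vartheta$ is not controlled by $L_T(g)\leq M$; one must exploit the linear-growth bound in $\mathbb{H}^1$ to factor out the $u$-dependence and isolate the time-integrable weight $h_g(s)$, whose $L^1(0,T)$-norm is uniformly controlled on $S^M$ by Lemma \ref{lem-2}(i). Once this weight is isolated and fed into Gronwall with time-dependent coefficient, the remaining steps (Aubin--Lions compactness, identification of the nonlinear limits, local-monotonicity Gronwall for uniqueness, and classical trace theorem for continuity) are essentially routine adaptations of the arguments already displayed for the stochastic equation.
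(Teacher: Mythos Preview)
Your proposal is correct and follows essentially the same route as the paper: Galerkin approximation, the $\mathbb{H}^1$ energy estimate with time-dependent weight $h_g(s)=\int_{\mathbb{Z}}\|\sigma(s,z)\|_{0,\mathbb{H}^1}|g(s,z)-1|\vartheta(dz)$ controlled uniformly on $S^M$ by Lemma~\ref{lem-2}(i), Aubin--Lions compactness (Lemma~\ref{lem-5}) to extract a subsequence converging strongly in $L^2(0,T;\mathbb{H}^1)$, identification of the nonlinear limits in the weak formulation, uniqueness via the $\mathbb{H}^0$ local-monotonicity bound (\ref{equ-18}) plus Gronwall, and $C([0,T];\mathbb{H}^1)$ regularity from Lemma~\ref{lem-6}.

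The differences are minor and cosmetic. The paper places a smooth cutoff $\chi_n(u)=\Phi_n(\|u\|_{\mathbb{H}^1})$ on the bilinear term so that the Galerkin ODE has globally Lipschitz right-hand side and is solved by an iteration argument, whereas you work with $\Pi_nF$ directly via local Lipschitzness; both are fine. For time compactness the paper establishes an explicit $W^{\alpha,2}([0,T];\mathbb{H}^0)$ bound on $u_n$ (decomposing $u_n=J_n^1+J_n^2+J_n^3$ and citing \cite{Z-Z} for the control-drift piece $J_n^3$), and along the way derives the sixth-power estimate (\ref{equ-45-1}) needed to bound $\|F_n(u_n)\|_{\mathbb{H}^0}$; you instead bound $du_n^g/dt$ directly. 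One caution: Lemma~\ref{lem-3}(2) gives only an $L^1(0,T;\mathbb{H}^1)$ bound on the control drift, so the claim $du_n^g/dt\in L^2(0,T;D(A^{-\alpha}))$ is not immediate from it---you need either the $W^{\alpha,2}$ argument the paper uses or the $C^{M,2}_{i,j}$ bound (\ref{equ-37}) from Lemma~\ref{lem-2}. The paper in fact asserts $du/dt\in L^2(0,T;\mathbb{H}^0)$ at the continuity step as well, so this is not a discrepancy with the paper, just a point where both arguments lean on \cite{Z-Z}.
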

\begin{proof}
\textbf{(Existence)} \ Let $\Phi_n: \mathbb{R}\rightarrow [0,1]$ be a smooth function such that $\Phi_n(t)=1$, if $|t|\leq n$, $\Phi_n(t)=0$ if $|t|> n+1$. Set $\chi_n(u)=\Phi_n(\|u\|_{\mathbb{H}^1})$, $u\in \mathbb{H}^1$.
Recall  $\Pi_n$ is the orthogonal projection from $\mathbb{H}^0$ onto the finite dimensional space $\mathbb{H}_n:=span\{e_1,e_2, \cdot\cdot\cdot, e_n\}$ defined as
\[
\Pi_n u:=\sum^n_{i=1}\langle u, e_i\rangle_{\mathbb{H}^0}e_i,
\]
where $\{e_i\}^{\infty}_{i=1}\subset \mathbb{H}^2$ is an orthonormal basis in $\mathbb{H}^0$ composed of eigenvectors of  $A$ such that $span\{e_i, i\geq 1\}$ is dense in $\mathbb{H}^1$. Moreover, it is easy to see that $\{e_i\}^{\infty}_{i=1}$ is also orthogonal in $\mathbb{H}^1$.
Define
\[
B_n(u,u)=\chi_n(u)B(u,u), \quad u\in \Pi_n\mathbb{H}^1.
\]
Consider the following Faedo-Galerkin's approximations: $u_n(t)\in \Pi_n \mathbb{H}^1$ satisfying that
\begin{eqnarray}\notag
du_n(t)&=&-Au_n(t)dt-\Pi_n B_n(u_n,u_n)dt-\Pi_n\mathcal{P}g_N(|u_n(t)|^2)u_n(t)dt\\
\label{equ-43}
&&\ +\int_{\mathbb{Z}} {\sigma}_n(t,u_n(t),z)(g(t,z)-1)\vartheta(dz)dt,
\end{eqnarray}
with initial value $u_n(0)=\Pi_nu_0$, where $ {\sigma}_n=\Pi_n \sigma$.

Since $B_n$ is a Lipschitz operator from $\Pi_n \mathbb{H}^1$ onto $\Pi_n \mathbb{H}^1$, the solution of equation (\ref{equ-43}) can be obtained through an iteration argument as follows.
Let $Y_0(t)=\Pi_nu_0$, $t\in [0,T]$. Suppose that $Y_{m-1}$ has been defined. Define $Y_m\in C([0,T];\Pi_n\mathbb{H}^1)\cap L^2([0,T];\Pi_n\mathbb{H}^2)$ as the unique solution of the equation
\begin{eqnarray*}
dY_m(t)&=&-AY_m(t)dt-\Pi_n B_n(Y_m,Y_m)dt-\Pi_n\mathcal{P}g_N(|Y_m(t)|^2)Y_m(t)dt\\
\label{equ-44}
&&\ +\int_{\mathbb{Z}} {\sigma}_n(t,Y_{m-1}(t),z)(g(t,z)-1)\vartheta(dz)dt,
\end{eqnarray*}
and $Y_m(0)=\Pi_nu_0$. Using similar methods as the proof of Theorem 3.1 in \cite{Y-Z-Z}, we can show that the limit $u_n$ of $Y_m$ is the unique strong solution to (\ref{equ-43}) and $u_n\in C([0,T];\Pi_n\mathbb{H}^1)\cap L^2([0,T];\Pi_n\mathbb{H}^2)$.

Now, for the solution $u_n(t)$ of (\ref{equ-43}), we aim to prove the following estimates:
\begin{eqnarray}\label{equ-45}
\sup_{n\geq 1}\Big(\sup_{t\in [0,T]}\|u_n(t)\|^2_{\mathbb{H}^1}+\int^T_0\|u_n(t)\|^2_{\mathbb{H}^2}dt\Big)&\leq& C_1,\\
\label{equ-45-1}
\sup_{n\geq 1}\Big(\sup_{t\in [0,T]}\|u_n(t)\|^6_{\mathbb{H}^1}+\int^T_0\|u_n(t)\|^4_{\mathbb{H}^1}\|u_n(t)\|^2_{\mathbb{H}^2}dt\Big)&\leq& C_2,
\end{eqnarray}
and for $\alpha\in (0,\frac{1}{2})$, there exists a constant $C_{\alpha}>0$ such that
\begin{eqnarray}\label{equ-46}
\sup_{n\geq 1}\|u_n\|^2_{W^{\alpha,2}([0,T];\mathbb{H}^0)}\leq C_{\alpha}.
\end{eqnarray}
Firstly, we make estimates of $\|u_n(t)\|^2_{\mathbb{H}^1}$. By the chain rule, we  obtain
\begin{eqnarray*}
\|u_n(t)\|^2_{\mathbb{H}^1}&=&\|u_n(0)\|^2_{\mathbb{H}^1}-2\int^t_0\langle u_n(s),Au_n(s)\rangle_{\mathbb{H}^1}ds-2\int^t_0\langle u_n(s),\Pi_nB_n(u_n,u_n)\rangle_{\mathbb{H}^1}ds\\
&&\ -2\int^t_0\langle u_n(s),\Pi_n\mathcal{P}g_N(|u_n(s)|^2)u_n(s)\rangle_{\mathbb{H}^1}ds\\
&&\ +2\int^t_0\langle u_n(s),\int_{\mathbb{Z}}{\sigma}_n(s,u_n(s),z)(g(s,z)-1)\vartheta(dz)\rangle_{\mathbb{H}^1}ds.
\end{eqnarray*}
Using (\ref{equ-7}), it follows that
\begin{eqnarray*}
&&\|u_n(t)\|^2_{\mathbb{H}^1}+\int^t_0\|u_n(s)\|^2_{\mathbb{H}^2}ds+\int^t_0\||u_n(s)|\cdot |\nabla u_n(s)|\|^2_{L^2}ds\\
&\leq&\|u_n(0)\|^2_{\mathbb{H}^1}+C_N\int^t_0(1+\|u_n(s)\|^2_{\mathbb{H}^1})ds\\
&&\ +2\int^t_0\langle u_n(s),\int_{\mathbb{Z}}{\sigma}_n(s,u_n(s),z)(g(s,z)-1)\vartheta(dz)\rangle_{\mathbb{H}^1}ds.
\end{eqnarray*}
According to Hypothesis H1, we obtain
\begin{eqnarray*}
&&2\int^t_0\langle u_n(s),\int_{\mathbb{Z}}{\sigma}_n(s,u_n(s),z)(g(s,z)-1)\vartheta(dz)\rangle_{\mathbb{H}^1}ds\\
&\leq& 2\int^t_0\int_{\mathbb{Z}}\|{\sigma}_n(s,u_n(s),z)\|_{\mathbb{H}^1}|g(s,z)-1|\|u_n(s)\|_{\mathbb{H}^1}\vartheta(dz)ds\\
&\leq& 2\int^t_0\int_{\mathbb{Z}}\|{\sigma}(s,z)\|_{0,\mathbb{H}^1}|g(s,z)-1|(1+2\|u_n(s)\|^2_{\mathbb{H}^1})\vartheta(dz)ds\\
&\leq& 2\int^t_0\int_{\mathbb{Z}}\|{\sigma}(s,z)\|_{0,\mathbb{H}^1}|g(s,z)-1|\vartheta(dz)ds\\
&&\ +4\int^t_0\|u_n(s)\|^2_{\mathbb{H}^1}\int_{\mathbb{Z}}\|{\sigma}(s,z)\|_{0,\mathbb{H}^1}|g(s,z)-1|\vartheta(dz)ds.
\end{eqnarray*}
Hence,
\begin{eqnarray*}
&&\sup_{s\in [0,t]}\|u_n(s)\|^2_{\mathbb{H}^1}+\int^t_0\|u_n(s)\|^2_{\mathbb{H}^2}ds+\int^t_0\||u_n(s)|\cdot |\nabla u_n(s)|\|^2_{L^2}ds\\
&\leq&\|u_n(0)\|^2_{\mathbb{H}^1}+C_N\int^t_0(1+\|u_n(s)\|^2_{\mathbb{H}^1})ds+2\int^t_0\int_{\mathbb{Z}}\|{\sigma}(s,z)\|_{0,\mathbb{H}^1}|g(s,z)-1|\vartheta(dz)ds\\
&&\ +4\int^t_0\|u_n(s)\|^2_{\mathbb{H}^1}\int_{\mathbb{Z}}\|{\sigma}(s,z)\|_{0,\mathbb{H}^1}|g(s,z)-1|\vartheta(dz)ds.
\end{eqnarray*}
Applying Gronwall inequality, we get
\begin{eqnarray*}
&&\sup_{s\in [0,t]}\|u_n(s)\|^2_{\mathbb{H}^1}+\int^t_0\|u_n(s)\|^2_{\mathbb{H}^2}ds\\
&\leq&\Big(\|u_n(0)\|^2_{\mathbb{H}^1}+C_Nt+2\int^t_0\int_{\mathbb{Z}}\|\sigma(s,z)\|_{0,\mathbb{H}^1}|g(s,z)-1|\vartheta(dz)ds\Big)\\
&&\ \times\exp\Big\{\int^t_0\Big(C_N+\int_{\mathbb{Z}}\|\sigma(s,z)\|_{0,\mathbb{H}^1}|g(s,z)-1|\vartheta(dz)\Big)ds\Big\}.
\end{eqnarray*}
With the help of Lemma \ref{lem-2}, we conclude the result (\ref{equ-45}).

Similarly to the above, by the chain rule, we obtain
\begin{eqnarray*}
\|u_n(t)\|^6_{\mathbb{H}^1}&=&\|u_n(0)\|^6_{\mathbb{H}^1}-6\int^t_0\|u_n(s)\|^4_{\mathbb{H}^1}\langle u_n(s), Au_n(s)\rangle_{\mathbb{H}^1}\\
&&\ -6\int^t_0\|u_n(s)\|^4_{\mathbb{H}^1}\langle u_n(s), \Pi_nB_n(u_n(s),u_n(s))\rangle_{\mathbb{H}^1}ds\\
&&\ -6\int^t_0\|u_n(s)\|^4_{\mathbb{H}^1}\langle u_n(s), \Pi_n \mathcal{P}g_N(|u_n(s)|^2)u_n(s)\rangle_{\mathbb{H}^1}ds\\
&&\ +6\int^t_0\|u_n(s)\|^4_{\mathbb{H}^1}\langle u_n(s),\int_{\mathbb{Z}}\sigma_n(s,u_n(s),z)(g(s,z)-1)\vartheta(dz)\rangle_{\mathbb{H}^1} ds.
\end{eqnarray*}
We deduce from  (\ref{equ-7}) that
\begin{eqnarray*}
&&\|u_n(t)\|^6_{\mathbb{H}^1}+3\int^t_0\|u_n(s)\|^4_{\mathbb{H}^1}\|u_n(s)\|^2_{\mathbb{H}^2}ds+3\int^t_0\|u_n(s)\|^4_{\mathbb{H}^1}\||u_n(s)|\cdot|\nabla u_n(s)|\|^2_{L^2}ds\\
&\leq&\|u_n(0)\|^6_{\mathbb{H}^1}+6C_N\int^t_0\|u_n(s)\|^4_{\mathbb{H}^1}(1+\| u_n(s)\|^2_{\mathbb{H}^1})ds\\
&&\ +6\int^t_0\|u_n(s)\|^4_{\mathbb{H}^1}\langle u_n(s),\int_{\mathbb{Z}}\sigma_n(s,u_n(s),z)(g(s,z)-1)\vartheta(dz)\rangle_{\mathbb{H}^1} ds.
\end{eqnarray*}
Using Hypothesis H1, we get
\begin{eqnarray*}
&&6\int^t_0\|u_n(s)\|^4_{\mathbb{H}^1}\langle u_n(s),\int_{\mathbb{Z}}\sigma_n(s,u_n(s),z)(g(s,z)-1)\vartheta(dz)\rangle_{\mathbb{H}^1} ds\\
&\leq& 6\int^t_0\|u_n(s)\|^4_{\mathbb{H}^1}\int_{\mathbb{Z}}\|\sigma(s,u_n(s),z)\|_{\mathbb{H}^1}|g(s,z)-1|\|u_n(s)\|_{\mathbb{H}^1}\vartheta(dz) ds\\
&\leq& 6\int^t_0\|u_n(s)\|^4_{\mathbb{H}^1}\int_{\mathbb{Z}}\|\sigma(s,z)\|_{0,\mathbb{H}^1}|g(s,z)-1|(1+2\|u_n(s)\|^2_{\mathbb{H}^1})\vartheta(dz) ds\\
&\leq& 6\int^t_0\int_{\mathbb{Z}}\|\sigma(s,z)\|_{0,\mathbb{H}^1}|g(s,z)-1|\vartheta(dz)(1+\|u_n(s)\|^6_{\mathbb{H}^1})ds\\
&&\ +12\int^t_0\int_{\mathbb{Z}}\|\sigma(s,z)\|_{0,\mathbb{H}^1}|g(s,z)-1|\vartheta(dz)\|u_n(s)\|^6_{\mathbb{H}^1}ds\\
&=& 6\int^t_0\int_{\mathbb{Z}}\|\sigma(s,z)\|_{0,\mathbb{H}^1}|g(s,z)-1|\vartheta(dz)ds+18\int^t_0\|u_n(s)\|^6_{\mathbb{H}^1}\int_{\mathbb{Z}}\|\sigma(s,z)\|_{0,\mathbb{H}^1}|g(s,z)-1|\vartheta(dz)ds.
\end{eqnarray*}
Hence, we conclude that
\begin{eqnarray*}
&&\sup_{s\in [0,t]}\|u_n(s)\|^6_{\mathbb{H}^1}+3\int^t_0\|u_n(s)\|^4_{\mathbb{H}^1}\|u_n(s)\|^2_{\mathbb{H}^2}ds+3\int^t_0\|u_n(s)\|^4_{\mathbb{H}^1}\||u_n(s)|\cdot|\nabla u_n(s)|\|^2_{L^2}ds\\
&\leq&\|u_n(0)\|^6_{\mathbb{H}^1}+6C_N\int^t_0\|u_n(s)\|^4_{\mathbb{H}^1}(1+\| u_n(s)\|^2_{\mathbb{H}^1})ds+6\int^t_0\int_{\mathbb{Z}}\|\sigma(s,z)\|_{0,\mathbb{H}^1}|g(s,z)-1|\vartheta(dz)ds\\
&&\ +18\int^t_0\|u_n(s)\|^6_{\mathbb{H}^1}\int_{\mathbb{Z}}\|\sigma(s,z)\|_{0,\mathbb{H}^1}|g(s,z)-1|\vartheta(dz)ds.
\end{eqnarray*}
Applying Gronwall inequality, we deduce the result (\ref{equ-45-1}).

For $u_n(t)$, it can be written as
\begin{eqnarray}\notag
u_n(t)&=&\Pi_nu_0-\int^t_0Au_n(s)ds-\int^t_0\Pi_n B_n(u_n(s))ds-\int^t_0\Pi_n \mathcal{P}g_N(|u_n(s)|^2)u_n(s)ds\\ \notag
&&\ +\int^t_0 \int_{\mathbb{Z}} {\sigma}_n(s,u_n(s),z)(g(s,z)-1)\vartheta(dz)ds\\ \notag
&=& \Pi_nu_0+\int^t_0\tilde{F}_n(u_n(s))ds+\int^t_0\int_{\mathbb{Z}} {\sigma}_n(s,u_n(s),z)(g(s,z)-1)\vartheta(dz)ds\\
\label{equ-47}
&=:& J^1_n(t)+ J^2_n(t)+ J^3_n(t).
\end{eqnarray}
Clearly, $\sup_{n\geq 1}\|J^1_n(t)\|_{\mathbb{H}^1}=\sup_{n\geq 1}\|\Pi_nu_0\|_{\mathbb{H}^1}\leq C_1$. Using (3.14) in \cite{R-Z}, we have
\begin{eqnarray}\label{equ-48}
\|\tilde{F}_n(u_n(t))\|_{L^2}\leq C(\|u_n(t)\|^3_{\mathbb{H}^1}+\|u_n(t)\|_{\mathbb{H}^2}).
\end{eqnarray}
For $s<t$, it follows from (\ref{equ-48}) that
\begin{eqnarray}\notag
\|J^2_n(t)-J^2_n(s)\|^2_{\mathbb{H}^0}&=&\|\int^t_s\tilde{F}_n(u_n(l))dl\|^2_{\mathbb{H}^0}\\ \notag
&\leq& \Big(\int^t_s\|\tilde{F}_n(u_n(l))\|_{\mathbb{H}^0}\Big)^2\\ \notag
&\leq& C\Big[\int^t_s(\|u_n(l)\|^3_{\mathbb{H}^1}+\|u_n(l)\|_{\mathbb{H}^2})dl\Big]^2\\
\label{equ-49}
&\leq& C(t-s)^2\sup_{l\in[0,T]}{\|u_n(l)\|^6_{\mathbb{H}^1}}+C(t-s)\int^t_s\|u_n(l)\|^2_{\mathbb{H}^2}dl,
\end{eqnarray}
which implies that
\begin{eqnarray}\label{equ-50}
\int^T_0\|J^2_n(t)\|^2_{\mathbb{H}^0}dt
\leq CT^3\sup_{l\in[0,T]}{\|u_n(l)\|^6_{\mathbb{H}^1}}+CT^2\int^T_0\|u_n(l)\|^2_{\mathbb{H}^2}dl.
\end{eqnarray}
Hence, using (\ref{equ-45})-(\ref{equ-45-1}) and (\ref{equ-49})-(\ref{equ-50}), for $\alpha\in (0,\frac{1}{2})$, we have
\begin{eqnarray*}
\sup_{n\geq 1}\|J^2_n\|^2_{W^{\alpha,2}([0,T];\mathbb{H}^0)}\leq C_2(\alpha).
\end{eqnarray*}
Using the same method as the proof of (4.20) in \cite{Z-Z}, we get
\begin{eqnarray*}
\sup_{n\geq 1}\|J^3_n\|^2_{W^{\alpha,2}([0,T];\mathbb{H}^0)}\leq C_3(\alpha).
\end{eqnarray*}
Collecting all the above estimates, we deduce the result (\ref{equ-46}).

Based on (\ref{equ-46}) and by $u_n\in L^2([0,T];\mathbb{H}^2)$, it follows from Lemma \ref{lem-5} that
$u_n(t)$ is compact in $L^2([0,T];\mathbb{H}^1)$. Thus, there exists an element $u\in L^2([0,T];\mathbb{H}^2)\cap L^{\infty}([0,T];\mathbb{H}^1)$ and a subsequence $u_{m'}$, $m'\rightarrow \infty$, such that
\begin{description}
  \item[1.] $u_{m'}\rightarrow u$ \ weakly\ in\ $L^2([0,T];\mathbb{H}^2)$,
  \item[2.] $u_{m'}\rightarrow u $\ in\ the\ weak-star\ topology\ of\ $L^{\infty}([0,T];\mathbb{H}^1)$,
  \item[3.] $u_{m'}\rightarrow u $ strongly\ in\ $L^2([0,T];\mathbb{H}^1)$.
\end{description}
Finally, we show that $u$ is the unique solution of (\ref{equ-41}).
We will use the similar arguments as in the proof of Theorem 4.1 in \cite{Z-Z}.

Let $\psi$ be a continuously differential function defined on $[0,T]$ with $\psi(T)=0$. Recall $\{e_j\}_{j\geq 1}$ is an orthonormal eigenfunction of $\mathbb{H}^0$. Multiplying (\ref{equ-43})
by $\psi(t)e_j$ and using integration by parts, we obtain
\begin{eqnarray*}
&&-\int^T_0\langle u_n(t), \psi'(t)e_j\rangle_{\mathbb{H}^0} dt+\int^T_0\langle Au_n(t),\psi(t)e_j\rangle_{\mathbb{H}^0} dt\\
&=&\langle  u_n(0), \psi(0)e_j\rangle_{\mathbb{H}^0} -\int^T_0\langle \Pi_n B_n(u_n(t),u_n(t)),\psi(t)e_j\rangle_{\mathbb{H}^{0}} dt\\
&&\ -\int^T_0\langle \Pi_n \mathcal{P}g_N(|u_n(t)|^2)u_n(t),\psi(t)e_j\rangle_{\mathbb{H}^{0}} dt\\
&& \
+\int^T_0\langle \int_{\mathbb{Z}}\sigma_n(t, u_n(t),z)(g(t,z)-1)\vartheta(dz), \psi(t)e_j\rangle_{\mathbb{H}^0} dt.
\end{eqnarray*}
Recall the definition of $B_n$ and (\ref{equ-45}),
for every $n>\sup_{m\in \mathbb{N}^+}\sup_{t\in[0,T]}\|u_m(t)\|^2_{\mathbb{H}^1}\vee j$, we arrive at
\begin{eqnarray*}
&&-\int^T_0\langle u_n(t), \psi'(t)e_j\rangle_{\mathbb{H}^0} dt+\int^T_0\langle Au_n(t),\psi(t)e_j\rangle_{\mathbb{H}^0} dt\\
&=&\langle  u_n(0), \psi(0)e_j\rangle_{\mathbb{H}^0} -\int^T_0\langle \Pi_nB_n(u_n(t),u_n(t)),\psi(t)e_j\rangle_{\mathbb{H}^{0}} dt\\
&&\ -\int^T_0\langle  \Pi_n\mathcal{P}g_N(|u_n(t)|^2)u_n(t),\psi(t)e_j\rangle_{\mathbb{H}^{0}} dt\\
&& \
+\int^T_0\langle \int_{\mathbb{Z}}\sigma_n(t, u_n(t),z)(g(t,z)-1)\vartheta(dz), \psi(t)e_j\rangle_{\mathbb{H}^0} dt.
\end{eqnarray*}
In the following, we devote to proving that as $n\rightarrow \infty$, it holds that
\begin{eqnarray*}
&&-\int^T_0\langle u(t), \psi'(t)e_j\rangle_{\mathbb{H}^0} dt+\int^T_0\langle A u(t),\psi(t)e_j\rangle_{\mathbb{H}^0} dt\\
&=&\langle  u(0), \psi(0)e_j\rangle_{\mathbb{H}^0} -\int^T_0\langle B(u(t),u(t)),\psi(t)e_j\rangle_{\mathbb{H}^{0}} dt\\
&&\ -\int^T_0\langle \mathcal{P} g_N(|u(t)|^2)u(t),\psi(t)e_j\rangle_{\mathbb{H}^{0}} dt\\
&& \
+\int^T_0\langle \int_{\mathbb{Z}}\sigma(t, u(t),z)(g(t,z)-1)\vartheta(dz), \psi(t)e_j\rangle_{\mathbb{H}^0} dt.
\end{eqnarray*}
Since $u_{m'}\rightarrow u$ strongly in $L^2([0,T];\mathbb{H}^1)$ as $m'\rightarrow \infty$, we deduce that
\begin{eqnarray*}
\Big|\int^T_0\langle u_{m'}(t)-u(t), \psi'(t)e_j\rangle_{\mathbb{H}^0} dt\Big|
&\leq& C\int^T_0\|u_{m'}(t)-u(t)\|^2_{\mathbb{H}^0}dt\\
&\leq& C\int^T_0\|u_{m'}(t)-u(t)\|^2_{\mathbb{H}^1}dt\rightarrow 0,
\end{eqnarray*}
and
\begin{eqnarray*}
\langle  u_{m'}(0)-u(0), \psi(0)e_j\rangle_{\mathbb{H}^0}\leq C\|u_{m'}(0)-u(0)\|^2_{\mathbb{H}^0}\rightarrow 0,\quad {\rm{as}}\quad  m'\rightarrow \infty.
\end{eqnarray*}
Moreover, by $u_{m'}\rightarrow u$ strongly in $L^2([0,T];\mathbb{H}^1)$, we have
\begin{eqnarray*}
\int^T_0\langle u_{m'}(t)-u(t),\psi(t)e_j\rangle_{\mathbb{H}^0} dt\leq C\int^T_0\|u_{m'}(t)-u(t)\|_{\mathbb{H}^1}dt\rightarrow 0,\quad {\rm{as}}\quad  m'\rightarrow \infty.
\end{eqnarray*}
%and
%\begin{eqnarray*}
%&&|\int^T_0\langle  Au_{m'}(t)-Au(t),\psi(t)e_j\rangle_{\mathbb{H}^1} dt|\\
%&=&|\int^T_0\langle  u_{m'}(t)-u(t),\psi(t)Ae_j\rangle_{\mathbb{H}^1} dt|\rightarrow 0,\ as\ m'\rightarrow \infty.
%\end{eqnarray*}
Since $u_{m'}\rightarrow u$ strongly in $L^2([0,T];\mathbb{H}^1)$ and by using (\ref{equ-45}), it follows that
\begin{eqnarray*}
&&\Big|\int^T_0\langle  B(u_{m'}(t),u_{m'}(t))-B(u(t),u(t)),\psi(t)e_j\rangle_{\mathbb{H}^{0}} dt\Big|\\
&=&\Big|\int^T_0\langle  B(u_{m'},u_{m'}-u)+B(u_{m'}-u,u),\psi(t)e_j\rangle_{\mathbb{H}^{0}} dt\Big|\\
&\leq& \int^T_0[\|\psi(t)e_j\|_{L^2}\|u_{m'}-u\|_{\mathbb{H}^1}\|u_{m'}\|_{L^{\infty}}+\|\psi(t)e_j\|_{L^2}\|u\|^{\frac{1}{2}}_{\mathbb{H}^1}\|u\|^{\frac{1}{2}}_{\mathbb{H}^2}\|u_{m'}-u\|_{\mathbb{H}^1}]dt\\
&\leq& C\int^T_0[\|u_{m'}\|^{\frac{1}{2}}_{\mathbb{H}^1}\|u_{m'}\|^{\frac{1}{2}}_{\mathbb{H}^2}\|u_{m'}-u\|_{\mathbb{H}^1}+\|u\|^{\frac{1}{2}}_{\mathbb{H}^1}\|u\|^{\frac{1}{2}}_{\mathbb{H}^2}\|u_{m'}-u\|_{\mathbb{H}^1}]dt\\
&\leq& C \sup_{t\in[0,T]}\|u_{m'}(t)\|^{\frac{1}{2}}_{\mathbb{H}^1}\int^T_0\|u_{m'}\|^{\frac{1}{2}}_{\mathbb{H}^2}\|u_{m'}-u\|_{\mathbb{H}^1}dt\\
&&\ +C\sup_{t\in[0,T]}\|u(t)\|^{\frac{1}{2}}_{\mathbb{H}^1}\int^T_0\|u\|^{\frac{1}{2}}_{\mathbb{H}^2}\|u_{m'}-u\|_{\mathbb{H}^1}dt\\
&\leq& C \sup_{t\in[0,T]}\|u_{m'}(t)\|^{\frac{1}{2}}_{\mathbb{H}^1}\Big(\int^T_0\|u_{m'}(t)\|_{\mathbb{H}^2}dt\Big)^{\frac{1}{2}}\Big(\int^T_0\|u_{m'}(t)-u(t)\|^2_{\mathbb{H}^1}dt\Big)^{\frac{1}{2}}\\
&&\ + C \sup_{t\in[0,T]}\|u(t)\|^{\frac{1}{2}}_{\mathbb{H}^1}\Big(\int^T_0\|u(t)\|_{\mathbb{H}^2}dt\Big)^{\frac{1}{2}}\Big(\int^T_0\|u_{m'}(t)-u(t)\|^2_{\mathbb{H}^1}dt\Big)^{\frac{1}{2}}\\
&\rightarrow& 0, \quad {\rm{as}}\quad  m'\rightarrow \infty.
\end{eqnarray*}
Clearly, we have
\begin{eqnarray}\notag
&&\int^T_0\langle \mathcal{P}g_N(|u_{m'}|^2)u_{m'}-\mathcal{P}g_N(|u|^2)u,\psi(t)e_j\rangle_{\mathbb{H}^{0}} dt\\ \notag
&\leq& C\int^T_0\|g_N(|u_{m'}|^2)u_{m'}-g_N(|u|^2)u\|_{\mathbb{H}^0}ds\\
\label{equ-75}
&\leq& C\int^T_0\|g_N(|u_{m'}|^2)(u_{m'}-u)\|_{\mathbb{H}^0}ds+C\int^T_0\|(g_N(|u_{m'}|^2)-g_N(|u|^2))u\|_{\mathbb{H}^0}ds.
\end{eqnarray}
By using $\sup_{x}|u(x)|^2\leq C\|u\|_{\mathbb{H}^1}\|u\|_{\mathbb{H}^2}$, we get
\begin{eqnarray}\notag
&&\|g_N(|u_{m'}|^2)(u_{m'}-u)\|_{\mathbb{H}^0}\\ \notag
&\leq&\Big(\int_{\mathbb{T}^3}|u^2_{m'}(x)+N|^2|u_{m'}-u|^2dx\Big)^{\frac{1}{2}}\\ \notag
&\leq& \Big(\sup_{x\in \mathbb{T}^3}|u_{m'}(x)|^2-N\Big)\Big(\int_{\mathbb{T}^3}|u_{m'}-u|^2dx\Big)^{\frac{1}{2}}\\
\label{equ-76}
&\leq&C (\|u_{m'}\|_{\mathbb{H}^1}\|u_{m'}\|_{\mathbb{H}^2}+N)\|u_{m'}-u\|_{\mathbb{H}^0}.
\end{eqnarray}
Moreover, we deduce that
\begin{eqnarray}\notag
&&\|(g_N(|u_{m'}|^2)-g_N(|u|^2))u\|_{\mathbb{H}^0}\\ \notag
&\leq& C\Big(\int_{\mathbb{T}^3}(|u_{m'}|+|u|)^2|u_{m'}-u|^2|u|^2dx\Big)^{\frac{1}{2}}\\
\label{equ-78}
&\leq& C\sup_{x}(|u_{m'}(x)|+|u(x)|)\Big(\int_{\mathbb{T}^3}|u_{m'}-u|^2|u|^2dx\Big)^{\frac{1}{2}}.
\end{eqnarray}
By the H\"{o}lder inequality and the Sobolev embedding inequality, we get
\begin{eqnarray}\notag
&&\Big(\int_{\mathbb{T}^3}|u_{m'}-u|^2|u|^2dx\Big)^{\frac{1}{2}}\\ \notag
&\leq& \||u|^2\|^{\frac{1}{2}}_{L^2}\|u_{m'}-u\|^{\frac{1}{2}}_{L^3}\|u_{m'}-u\|^{\frac{1}{2}}_{L^6}\\ \notag
&\leq& \|u\|_{L^4}\|u_{m'}-u\|^{\frac{1}{2}}_{L^3}\|u_{m'}-u\|^{\frac{1}{2}}_{L^6}\\
\label{equ-79}
&\leq& C \|u\|^{\frac{1}{4}}_{\mathbb{H}^0}\|u\|^{\frac{3}{4}}_{\mathbb{H}^1}\|u_{m'}-u\|^{\frac{1}{4}}_{\mathbb{H}^0}\|u_{m'}-u\|^{\frac{3}{4}}_{\mathbb{H}^1}.
\end{eqnarray}
Hence, combining (\ref{equ-78}) and (\ref{equ-79}), it follows that
\begin{eqnarray}\notag
&&\|(g_N(|u_{m'}|^2)-g_N(|u|^2))u\|_{\mathbb{H}^0}\\ \notag
&\leq& C\sup_{x}(|u_{m'}(x)|+|u(x)|)\|u\|^{\frac{1}{4}}_{\mathbb{H}^0}\|u\|^{\frac{3}{4}}_{\mathbb{H}^1}\|u_{m'}-u\|^{\frac{1}{4}}_{\mathbb{H}^0}\|u_{m'}-u\|^{\frac{3}{4}}_{\mathbb{H}^1}\\
\notag
&\leq& C(\|u_{m'}\|^{\frac{1}{2}}_{\mathbb{H}^1}\|u_{m'}\|^{\frac{1}{2}}_{\mathbb{H}^2}+\|u\|^{\frac{1}{2}}_{\mathbb{H}^1}\|u\|^{\frac{1}{2}}_{\mathbb{H}^2})\|u\|^{\frac{1}{4}}_{\mathbb{H}^0}\|u\|^{\frac{3}{4}}_{\mathbb{H}^1}\|u_{m'}-u\|^{\frac{1}{4}}_{\mathbb{H}^0}\|u_{m'}-u\|^{\frac{3}{4}}_{\mathbb{H}^1}\\
\notag
&\leq& C\|u_{m'}\|^{\frac{1}{2}}_{\mathbb{H}^1}\|u_{m'}\|^{\frac{1}{2}}_{\mathbb{H}^2}\|u\|^{\frac{1}{4}}_{\mathbb{H}^0}\|u\|^{\frac{3}{4}}_{\mathbb{H}^1}\|u_{m'}-u\|^{\frac{1}{4}}_{\mathbb{H}^0}\|u_{m'}-u\|^{\frac{3}{4}}_{\mathbb{H}^1}\\
\label{equ-77}
&&\ +\|u\|^{\frac{1}{4}}_{\mathbb{H}^0}\|u\|^{\frac{5}{4}}_{\mathbb{H}^1}\|u\|^{\frac{1}{2}}_{\mathbb{H}^2}\|u_{m'}-u\|^{\frac{1}{4}}_{\mathbb{H}^0}\|u_{m'}-u\|^{\frac{3}{4}}_{\mathbb{H}^1}.
\end{eqnarray}
Collecting (\ref{equ-75})-(\ref{equ-77}) and using the H\"{o}lder inequality, we deduce that
\begin{eqnarray*}\notag
&&\int^T_0\langle \mathcal{P}g_N(|u_{m'}|^2)u_{m'}-\mathcal{P}g_N(|u|^2)u,\psi(t)e_j\rangle_{\mathbb{H}^{0}} dt\\ \notag
&\leq& C\sup_{t\in [0,T]}\|u_{m'}(t)\|_{\mathbb{H}^1}\Big(\int^T_0\|u_{m'}(t)\|^2_{\mathbb{H}^2}dt\Big)^{\frac{1}{2}}\Big(\int^T_0\|u_{m'}(t)-u(t)\|^2_{\mathbb{H}^0}dt\Big)^{\frac{1}{2}}+CN\int^T_0\|u_{m'}(t)-u(t)\|_{\mathbb{H}^0}dt\\
&&\ +C\int^T_0\|u\|^{\frac{1}{4}}_{\mathbb{H}^0}\|u\|^{\frac{3}{4}}_{\mathbb{H}^1}\|u_{m'}(t)\|^{\frac{1}{2}}_{\mathbb{H}^1}\|u_{m'}\|^{\frac{1}{2}}_{\mathbb{H}^2}\|u_{m'}-u\|^{\frac{1}{4}}_{\mathbb{H}^0}\|u_{m'}-u\|^{\frac{3}{4}}_{\mathbb{H}^1}dt\\
&&\ +C\int^T_0\|u\|^{\frac{1}{4}}_{\mathbb{H}^0}\|u\|^{\frac{5}{4}}_{\mathbb{H}^1}\|u\|^{\frac{1}{2}}_{\mathbb{H}^2}\|u_{m'}-u\|^{\frac{1}{4}}_{\mathbb{H}^0}\|u_{m'}-u\|^{\frac{3}{4}}_{\mathbb{H}^1}dt\\
&\leq& C\sup_{t\in [0,T]}\|u_{m'}(t)\|_{\mathbb{H}^1}\Big(\int^T_0\|u_{m'}(t)\|^2_{\mathbb{H}^2}dt\Big)^{\frac{1}{2}}\Big(\int^T_0\|u_{m'}(t)-u(t)\|^2_{\mathbb{H}^0}dt\Big)^{\frac{1}{2}}+CN\int^T_0\|u_{m'}(t)-u(t)\|_{\mathbb{H}^0}dt\\
&&\ +C\sup_{t\in [0,T]}\|u(t)\|^{\frac{1}{4}}_{\mathbb{H}^0}\|u(t)\|^{\frac{3}{4}}_{\mathbb{H}^1}\|u_{m'}(t)\|^{\frac{1}{2}}_{\mathbb{H}^1}\Big(\int^T_0\|u_{m'}\|^{2}_{\mathbb{H}^2}dt\Big)^{\frac{1}{4}}\Big(\int^T_0\|u_{m'}-u\|^{\frac{2}{3}}_{\mathbb{H}^0}dt\Big)^{\frac{3}{8}}\Big(\int^T_0\|u_{m'}-u\|^{2}_{\mathbb{H}^1}dt\Big)^{\frac{3}{8}}\\
&&\ +C\sup_{t\in [0,T]}\|u(t)\|^{\frac{1}{4}}_{\mathbb{H}^0}\|u(t)\|^{\frac{5}{4}}_{\mathbb{H}^1}\Big(\int^T_0\|u(t)\|^{2}_{\mathbb{H}^2}dt\Big)^{\frac{1}{4}}\Big(\int^T_0\|u_{m'}-u\|^{\frac{2}{3}}_{\mathbb{H}^0}dt\Big)^{\frac{3}{8}}\Big(\int^T_0\|u_{m'}(t)-u(t)\|^{2}_{\mathbb{H}^1}dt\Big)^{\frac{3}{8}}\\
&&\ \rightarrow 0,\quad {\rm{as}} \quad m'\rightarrow \infty,
\end{eqnarray*}
where $(\ref{equ-45})$, $u_{m'}\rightarrow u$ strongly in $L^2([0,T];\mathbb{H}^1)$ and  $u\in L^2([0,T];\mathbb{H}^2)\cap L^{\infty}([0,T];\mathbb{H}^1) $ are used.
%Hence, by (\ref{equ-45}), $u_{m'}\rightarrow u$ strongly in $L^2([0,T];\mathbb{H}^1)$ and using $u\in L^2([0,T];\mathbb{H}^2)\cap L^{\infty}([0,T];\mathbb{H}^1) $, we have
%\begin{eqnarray*}
%&&\int^T_0\langle g_N(|u_{m'}|^2)u_{m'}n-g_N(|u|^2)u,\psi(t)e_j\rangle_{\mathbb{H}^{-1},\mathbb{H}^1} dt\\
% &\leq&\int^T_0\|g_N(|u_{m'}|^2)u_{m'}-g_N(|u|^2)u\|_{\mathbb{H}^0}ds\\
%&\leq&  C \sup_{t}\|u_{m'}(t)\|^{\frac{1}{2}}_{\mathbb{H}^1}\sup_{x}\|u\|_{\mathbb{H}^1}\Big(\int^T_0 \|u_{m'}-u\|^2_{\mathbb{H}^1} dt\Big)^{\frac{1}{2}}\Big(\int^T_0 \|u_{m'}\|_{\mathbb{H}^2} dt\Big)^{\frac{1}{2}}\\
%&&\ +C\sup_{t}\|u(t)\|^{\frac{3}{2}}_{\mathbb{H}^1}\Big(\int^T_0 \|u_{m'}-u\|^2_{\mathbb{H}^1} dt\Big)^{\frac{1}{2}}\Big(\int^T_0 \|u\|_{\mathbb{H}^2} dt\Big)^{\frac{1}{2}}\rightarrow 0, \ as\ m'\rightarrow \infty.
%\end{eqnarray*}

Using similar method as (4.29) in \cite{Z-Z}, we get
\begin{eqnarray*}
\lim_{m'\rightarrow \infty}\int^T_0 \int_{\mathbb{Z}}\|\sigma(t, u_{m'}(t),z)(g(t,z)-1)-\sigma(t, u(t),z)(g(t,z)-1)\|_{\mathbb{H}^0} \vartheta(dz)dt=0,
\end{eqnarray*}
which implies
\begin{eqnarray*}
&&\int^T_0\langle \int_{\mathbb{Z}}\sigma(t, u_{m'}(t),z)(g(t,z)-1)\vartheta(dz), \psi(t)e_j\rangle_{\mathbb{H}^0} dt\\
&\rightarrow& \int^T_0\langle \int_{\mathbb{Z}}\sigma(t, u(t),z)(g(t,z)-1)\vartheta(dz), \psi(t)e_j\rangle_{\mathbb{H}^0} dt,
\end{eqnarray*}
as $m'\rightarrow \infty$.
Based on the above steps, we conclude that for any $j\geq 1$,
\begin{eqnarray}\notag
&&-\int^T_0\langle u(t), \psi'(t)e_j\rangle_{\mathbb{H}^0} dt+\int^T_0\langle Au(t),\psi(t)e_j\rangle_{\mathbb{H}^0} dt\\ \notag
&=&\langle  u(0), \psi(0)e_j\rangle_{\mathbb{H}^0} -\int^T_0\langle B(u(t),u(t)),\psi(t)e_j\rangle_{\mathbb{H}^{0}} dt\\ \notag
&&\ -\int^T_0\langle  \mathcal{P}g_N(|u(t)|^2)u(t),\psi(t)e_j\rangle_{\mathbb{H}^{0}} dt\\
\label{equ-53}
&& \
+\int^T_0\langle \int_{\mathbb{Z}}\sigma(t, u(t),z)(g(t,z)-1)\vartheta(dz), \psi(t)e_j\rangle_{\mathbb{H}^0} dt.
\end{eqnarray}
Actually, (\ref{equ-53}) holds for any $\zeta\in \mathbb{H}^0$, which is a finite linear combination of $e_j$. That is
\begin{eqnarray}\notag
&&-\int^T_0\langle u(t), \psi'(t)\zeta\rangle_{\mathbb{H}^0} dt+\int^T_0\langle Au(t),\psi(t)\zeta\rangle_{\mathbb{H}^0} dt\\ \notag
&=&\langle  u(0), \psi(0)\zeta\rangle_{\mathbb{H}^0} -\int^T_0\langle B(u(t),u(t)),\psi(t)\zeta\rangle_{\mathbb{H}^{0}} dt\\ \notag
&&\ -\int^T_0\langle  g_N(|u(t)|^2)u(t),\psi(t)\zeta\rangle_{\mathbb{H}^{0}} dt\\
\label{equ-54}
&& \
+\int^T_0\langle \int_{\mathbb{Z}}\sigma(t, u(t),z)(g(t,z)-1)\vartheta(dz), \psi(t)\zeta\rangle_{\mathbb{H}^0} dt.
\end{eqnarray}
As a result, we obtain
\begin{eqnarray}\label{eq-55}
du(t)+A u(t)dt+B(u(t))dt+ \mathcal{P}g_N(|u(t)|^2)u(t)dt=\int_{\mathbb{Z}}\sigma(t, u(t),z)(g(t,z)-1)\vartheta(dz)dt
\end{eqnarray}
holds as an equality in distribution in $L^2([0,T];\mathbb{H}^{0})$.

From here, using similar arguments as in the proof of Theorem 3.1 in Temam \cite{Temam-1}, we can conclude that $u$ is the desired solution.

By Lemma \ref{lem-3} and using the same arguments as in the proof of Theorem 3.2 in Temam \cite{Temam-1}, we can obtain
\[
\frac{du}{dt}\in L^2([0,T];\mathbb{H}^0).
\]
Since $u\in L^2([0,T];\mathbb{H}^2)$, we deduce from Lemma \ref{lem-6} that $u\in C([0,T];\mathbb{H}^1)$.

\textbf{(Uniqueness)}\ Suppose $u_1, u_2$ are two solutions of  (\ref{equ-41}). Let $u=u_1-u_2$ and $u(0)=u_1(0)-u_2(0)$, we have
\begin{eqnarray*}
du(t)=(F(u_1)-F(u_2))dt+\int_{\mathbb{Z}}(\sigma(t,u_1,z)-\sigma(t,u_2,z))(g(t,z)-1)\vartheta(dz)dt.
\end{eqnarray*}
Now, we make $L^2$  estimates of $u(t)$ as follows.
By the chain rule, it gives that
\begin{eqnarray*}
\|u(t)\|^2_{\mathbb{H}^0}&=&\|u(0)\|^2_{\mathbb{H}^0}+2\int^t_0\langle u(s), F(u_1)-F(u_2)\rangle_{\mathbb{H}^0}ds\\
&&\ +2\int^t_0\langle u(s), \int_{\mathbb{Z}}(\sigma(s,u_1,z)-\sigma(s,u_2,z))(g(s,z)-1)\vartheta(dz)\rangle_{\mathbb{H}^0}ds\\
&=:& \|u(0)\|^2_{\mathbb{H}^0}+I_1(t)+I_2(t).
\end{eqnarray*}
By (\ref{equ-18}), we obtain
\begin{eqnarray*}
I_1(t)&\leq& -\int^t_0\|u(s)\|^2_{\mathbb{H}^1}ds+2C_0\int^t_0(\|u_2\|_{\mathbb{H}^1}\|u_2\|_{\mathbb{H}^2}+1)\|u(s)\|^2_{\mathbb{H}^0}ds,\\
I_2(t)&\leq& 2\int^t_0\int_{\mathbb{Z}}\|\sigma(s,u_1,z)-\sigma(s,u_2,z)\|_{\mathbb{H}^0}|g(s,z)-1|\|u(s)\|_{\mathbb{H}^0}\vartheta(dz)ds\\
&\leq& 2\int^t_0\|u(s)\|^2_{\mathbb{H}^0}\Big(\int_{\mathbb{Z}}\|\sigma(s,z)\|_{1, \mathbb{H}^0}|g(s,z)-1|\vartheta(dz)\Big)ds.
\end{eqnarray*}
Collecting all the above estimates, we get
\begin{eqnarray*}
\|u(t)\|^2_{\mathbb{H}^0}+\int^t_0\|u(s)\|^2_{\mathbb{H}^1}ds&\leq&\|u(0)\|^2_{\mathbb{H}^0} +2C_0\int^t_0(\|u_2\|_{\mathbb{H}^1}\|u_2\|_{\mathbb{H}^2}+1)\|u(s)\|^2_{\mathbb{H}^0}ds\\
&&\ +2\int^t_0\|u(s)\|^2_{\mathbb{H}^0}\Big(\int_{\mathbb{Z}}\|\sigma(s,z)\|_{1, \mathbb{H}^0}|g(s,z)-1|\vartheta(dz)\Big)ds.
\end{eqnarray*}
By Gronwall inequality, we deduce that
\begin{eqnarray*}
&&\|u(t)\|^2_{\mathbb{H}^0}\\
&\leq&\|u(0)\|^2_{\mathbb{H}^0} \exp\Big\{2C_0\int^t_0(\|u_2\|_{\mathbb{H}^1}\|u_2\|_{\mathbb{H}^2}+1)ds+2\int^t_0\int_{\mathbb{Z}}\|\sigma(s,z)\|_{1, \mathbb{H}^0}|g(s,z)-1|\vartheta(dz)ds\Big\}\\
&\leq& \|u(0)\|^2_{\mathbb{H}^0}\exp\Big\{2C_0t+2C_0\sup_{t\in [0,T]}\|u_2(t)\|_{\mathbb{H}^1}\int^t_0\|u_2(s)\|_{\mathbb{H}^2}ds+2\int^t_0\int_{\mathbb{Z}}\|\sigma(s,z)\|_{1, \mathbb{H}^0}|g(s,z)-1|\vartheta(dz)ds\Big\}.
\end{eqnarray*}
Since the two solutions $u_i$ of (\ref{equ-41}) are in the state space $ C([0,T];\mathbb{H}^1)\cap L^2([0,T];\mathbb{H}^2)$ for $i=1,2$, we deduce that $u_1=u_2$ if $u_1(0)=u_2(0)$.

\end{proof}

\section{Large deviations}
This section is devoted to the proof of the main result (Theorem \ref{thm-3}).
According to Theorem \ref{thm-1}, we need to prove (i) and (ii) in \textbf{Condition A}.

Firstly, we prove (i) in \textbf{Condition A}.
For $g\in S$, from  Theorem \ref{thm-2}, we can define
\begin{eqnarray*}
\mathcal{G}^0(\vartheta^g_T):=u^g.
\end{eqnarray*}
\begin{prp}\label{prp-1}
For any $M\in \mathbb{N}^+$, and $\{g_n\}_{n\geq 1}\subset S^M, g\in S^M$ satisfying $g_n\rightarrow g$ as $n\rightarrow \infty$. Then
\begin{eqnarray*}
\mathcal{G}^0(\vartheta^{g_n}_T)\rightarrow \mathcal{G}^0(\vartheta^{g}_T) \quad {\rm{in}}\quad C([0,T];\mathbb{H}^1).
\end{eqnarray*}
\end{prp}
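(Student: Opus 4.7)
My strategy has three stages: (1) compactness and extraction of a convergent subsequence via uniform a priori bounds; (2) identification of the subsequential limit as $u^g$ via the uniqueness part of Theorem \ref{thm-2}; (3) an energy estimate for the difference $w_n=u^{g_n}-u^g$ that upgrades the convergence to $C([0,T];\mathbb{H}^1)$.

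\textbf{Stage 1.} From Theorem \ref{thm-2}, estimate \eqref{equ-42} bounds $\{u^{g_n}\}$ uniformly in $L^\infty([0,T];\mathbb{H}^1)\cap L^2([0,T];\mathbb{H}^2)$, and a repetition of the argument behind \eqref{equ-45-1} gives a uniform control of $\sup_t\|u^{g_n}\|_{\mathbb{H}^1}^6$. Using (3.14) of \cite{R-Z}, $\{F(u^{g_n})\}$ is uniformly bounded in $L^2([0,T];\mathbb{H}^0)$, and Lemma \ref{lem-3}(2) gives uniform boundedness of the driving drift in $L^1([0,T];\mathbb{H}^1)$. Following the derivation of \eqref{equ-46}, I obtain a uniform bound in $W^{\alpha,2}([0,T];\mathbb{H}^0)$ for some $\alpha\in(0,\tfrac12)$. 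Lemma \ref{lem-5} then yields relative compactness of $\{u^{g_n}\}$ in $L^2([0,T];\mathbb{H}^1)$: after extracting a subsequence, $u^{g_n}\to\bar u$ strongly in $L^2([0,T];\mathbb{H}^1)$, weakly in $L^2([0,T];\mathbb{H}^2)$, and weak-$*$ in $L^\infty([0,T];\mathbb{H}^1)$.

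\textbf{Stage 2.} I mimic the limit passage used in the proof of Theorem \ref{thm-2}. The linear, bilinear and tamed terms pass using the strong $L^2(\mathbb{H}^1)$ and weak $L^2(\mathbb{H}^2)$ convergences (the tamed nonlinearity is handled exactly as in the chain culminating in \eqref{equ-77}). For the driving term, I decompose
\begin{eqnarray*}
\int_{\mathbb{Z}}\sigma(s,u^{g_n},z)(g_n-1)\vartheta(dz) &=& \int_{\mathbb{Z}}[\sigma(s,u^{g_n},z)-\sigma(s,\bar u,z)](g_n-1)\vartheta(dz)\\
&& +\int_{\mathbb{Z}}\sigma(s,\bar u,z)(g_n-g)\vartheta(dz)+\int_{\mathbb{Z}}\sigma(s,\bar u,z)(g-1)\vartheta(dz).
\end{eqnarray*}
The first piece vanishes (tested against $\psi(t)e_j$) via \eqref{equ-6}, \eqref{equ-36} and the strong $L^2([0,T];\mathbb{H}^1)$ convergence of $u^{g_n}$; the second vanishes by Lemma \ref{lem-4}(1) applied to the scalar $h(s,z)=\langle\sigma(s,\bar u(s),z),e_j\rangle_{\mathbb{H}^0}\psi(s)$, whose integrability is ensured by Hypothesis H1 and the uniform bound on $\bar u$ in $L^\infty(\mathbb{H}^1)$. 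Thus $\bar u$ satisfies \eqref{equ-41} with $g$, and uniqueness in Theorem \ref{thm-2} gives $\bar u=u^g$; a subsequence argument then upgrades the convergence to the full sequence.

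\textbf{Stage 3 and main obstacle.} Since $w_n(0)=0$, the chain rule together with the local $\mathbb{H}^1$-monotonicity \eqref{equ-55} yields
\begin{eqnarray*}
\|w_n(t)\|^2_{\mathbb{H}^1}+\tfrac12\int_0^t\|w_n\|^2_{\mathbb{H}^2}ds \leq \int_0^t\psi_n(s)\|w_n(s)\|^2_{\mathbb{H}^1}ds + 2\int_0^t\langle w_n(s),V_n(s)\rangle_{\mathbb{H}^1}ds,
\end{eqnarray*}
where $\psi_n$ lumps together the monotonicity coefficient $C(1+\|u^{g_n}\|^4_{\mathbb{H}^1}+\|u^g\|^4_{\mathbb{H}^1}+\|u^g\|^2_{\mathbb{H}^2})$ and the Lipschitz noise contribution $\int_{\mathbb{Z}}\|\sigma(s,z)\|_{1,\mathbb{H}^1}|g_n-1|\vartheta(dz)$ (both uniformly $L^1$ in $s$ by Stage 1 and Lemma \ref{lem-2}), and $V_n(s)=\int_{\mathbb{Z}}\sigma(s,u^g(s),z)(g_n-g)\vartheta(dz)$. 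By Gronwall the claim reduces to showing $\sup_{t\in[0,T]}|\int_0^t\langle w_n(s),V_n(s)\rangle_{\mathbb{H}^1}ds|\to 0$. \emph{This last step is the main obstacle:} since $w_n$ depends on $n$, Lemma \ref{lem-4}(1) cannot be applied directly to $\langle w_n,V_n\rangle_{\mathbb{H}^1}$, and the obvious bound $\sup_s\|w_n\|_{\mathbb{H}^1}\cdot\int_0^T\|V_n(s)\|_{\mathbb{H}^1}ds$ requires the unsigned convergence of $\int_{\mathbb{Z}_T}\|\sigma(s,z)\|_{0,\mathbb{H}^1}|g_n-g|\vartheta(dz)ds$, which is stronger than what Lemma \ref{lem-4}(1) supplies. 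My plan is to circumvent this by expanding $V_n$ in the $\mathbb{H}^1$-orthonormal basis $\{\lambda_k^{-1/2}e_k\}$, using the uniform tail estimate from Hypothesis H1 and Lemma \ref{lem-2} to truncate to finitely many modes, and then invoking Lemma \ref{lem-4}(1) on each of the remaining scalar coefficients together with the uniform $L^\infty(\mathbb{H}^1)$ bound on $w_n$ to conclude.
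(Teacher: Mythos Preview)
Your Stages 1–2 and the energy/Gronwall setup in Stage~3 are correct and mirror the paper's proof exactly; the paper likewise reduces everything to showing that the cross term $K_2^{m'}(t)=2\int_0^t\langle Z^{m'},\int_{\mathbb Z}\sigma(s,u^{m'},z)(g_{m'}-g)\vartheta(dz)\rangle_{\mathbb H^1}ds$ (your $\int_0^t\langle w_n,V_n\rangle_{\mathbb H^1}ds$, up to the harmless choice of where to put $u^g$ versus $u^{g_n}$ in $\sigma$) tends to zero, and defers that to (4.61) of \cite{Z-Z}.

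Your proposed resolution of this obstacle, however, has a genuine gap. Expanding $V_n$ in a basis and applying Lemma~\ref{lem-4}(1) to each scalar coefficient $\langle V_n(\cdot),\tilde e_k\rangle_{\mathbb H^1}$ only gives $\int_0^T\langle V_n(s),\tilde e_k\rangle_{\mathbb H^1}\,ds\to 0$, i.e.\ convergence of the \emph{signed} time integral. But the quantity you must control is $\int_0^t\langle w_n(s),\tilde e_k\rangle_{\mathbb H^1}\langle V_n(s),\tilde e_k\rangle_{\mathbb H^1}\,ds$, and one cannot bound $|\int f_n h_n|$ by $\|f_n\|_\infty\cdot|\int h_n|$. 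So the uniform $L^\infty(\mathbb H^1)$ bound on $w_n$ does not combine with Lemma~\ref{lem-4}(1) in the way you suggest; the $n$-dependence of $w_n$ persists after mode truncation. Moreover, the ``uniform tail estimate from H1 and Lemma~\ref{lem-2}'' does not obviously make $\int_0^T\|(I-\Pi_L)V_n(s)\|_{\mathbb H^1}ds$ small uniformly in $n$, since the integrating measure $|g_n-g|\vartheta(dz)ds$ varies with $n$.

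The argument that actually closes the loop (and is what \cite{Z-Z} does) uses two ingredients you have already established but did not invoke: the \emph{strong} $L^2([0,T];\mathbb H^1)$ convergence $w_n\to 0$ from Stage~2, and the time equi-integrability in Lemma~\ref{lem-2}(ii). Bound
\[
\sup_{t}\Big|\int_0^t\langle w_n,V_n\rangle_{\mathbb H^1}ds\Big|\le C\int_0^T\|w_n(s)\|_{\mathbb H^1}\,\phi_n(s)\,ds,\qquad \phi_n(s):=\int_{\mathbb Z}\|\sigma(s,z)\|_{0,\mathbb H^1}|g_n-g|\vartheta(dz),
\]
so that $\sup_n\int_0^T\phi_n\le 2C^{M,1}_{0,1}$ and $\{\phi_n\}$ is equi-integrable on $[0,T]$ by \eqref{equ-38}. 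For $\delta>0$ set $A_n^\delta=\{s:\|w_n(s)\|_{\mathbb H^1}>\delta\}$; Chebyshev and Stage~2 give $\lambda_T(A_n^\delta)\le\delta^{-2}\|w_n\|^2_{L^2_t(\mathbb H^1)}\to 0$, hence $\sup_n\int_{A_n^\delta}\phi_n\to 0$ by equi-integrability, while on $(A_n^\delta)^c$ the integral is at most $2\delta\,C^{M,1}_{0,1}$. Letting $n\to\infty$ and then $\delta\to 0$ finishes the proof. No basis expansion is needed.
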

\begin{proof}
Recall that $\mathcal{G}^0(\vartheta^{g_n}_T)=u^{g_n}$. For simplicity, denote $u^n=u^{g_n}$.

Using similar method as Theorem \ref{thm-1}, we obtain that there exist constants $C_1(M), C_2(M)$ and $C_{\alpha, M}$ such that
\begin{description}
  \item[1.] $
\sup_{s\in [0,T]}\|u^n(s)\|^2_{\mathbb{H}^1}+\int^T_0\|u^n(s)\|^{2}_{\mathbb{H}^2}ds\leq C_1(M),$
  \item[2.] $
\sup_{s\in [0,T]}\|u^n(s)\|^6_{\mathbb{H}^1}+\int^T_0\|u^n(s)\|^4_{\mathbb{H}^1}\|u^n(s)\|^{2}_{\mathbb{H}^2}ds\leq C_2(M),$
\item[3.] $
\|u^n\|^2_{W^{\alpha,2}([0,T];\mathbb{H}^0)}\leq C_{\alpha, M}, \ \alpha\in (0,\frac{1}{2}).
$
\end{description}
Hence, by Lemma \ref{lem-5}, we can assert the existence of an element $u\in L^2([0,T];\mathbb{H}^2)\cap L^{\infty}([0,T];\mathbb{H}^1)$ and a subsequence $u^{m'}$ such that
\begin{description}
  \item[(a)] $u^{m'}\rightarrow u$ \ weakly\ in\ $L^2([0,T];\mathbb{H}^2)$,
  \item[(b)] $u^{m'}\rightarrow u$ \ weak-star\ in\ $L^{\infty}([0,T];\mathbb{H}^1)$,
  \item[(c)] $u^{m'}\rightarrow u$ \ strongly\ in \ $L^2([0,T];\mathbb{H}^1)$.
\end{description}
We will prove that $u=u^g=\mathcal{G}^0(\vartheta^{g}_T)$.

Let $\psi$ be a continuously differential function defined on $[0,T]$ with $\psi(T)=0$. Multiplying the equation (\ref{equ-41}) satisfied by $u^{m'}(t)$ by $\psi(t)e_j$ and using integration by parts, we obtain
\begin{eqnarray*}
&&-\int^T_0\langle u^{m'}(t), \psi'(t)e_j\rangle_{\mathbb{H}^0} dt+\int^T_0\langle Au^{m'}(t),\psi(t)e_j\rangle_{\mathbb{H}^0} dt\\
&=&\langle  u_0, \psi(0)e_j\rangle_{\mathbb{H}^0} -\int^T_0\langle B(u^{m'}(t),u^{m'}(t)),\psi(t)e_j\rangle_{\mathbb{H}^{0}} dt\\
&&\ -\int^T_0\langle  \mathcal{P}g_N(|u^{m'}(t)|^2)u^{m'}(t),\psi(t)e_j\rangle_{\mathbb{H}^{0}} dt\\
&& \
+\int^T_0\langle \int_{\mathbb{Z}}\sigma(t, u^{m'}(t),z)(g_{m'}(t,z)-1)\vartheta(dz), \psi(t)e_j\rangle_{\mathbb{H}^0} dt.
\end{eqnarray*}
Utilizing the same method as Theorem \ref{thm-2}, we deduce that
\begin{eqnarray*}
&&-\int^T_0\langle u^{m'}(t), \psi'(t)e_j\rangle_{\mathbb{H}^0} dt+\int^T_0\langle Au^{m'}(t),\psi(t)e_j\rangle_{\mathbb{H}^0} dt\\
&&\ -\langle  u_0, \psi(0)e_j\rangle_{\mathbb{H}^0} +\int^T_0\langle B(u^{m'}(t),u^{m'}(t)),\psi(t)e_j\rangle_{\mathbb{H}^{0}} dt\\
&&\ +\int^T_0\langle  \mathcal{P}g_N(|u^{m'}(t)|^2)u^{m'}(t),\psi(t)e_j\rangle_{\mathbb{H}^{0}} dt\\
& \rightarrow& -\int^T_0\langle u(t), \psi'(t)e_j\rangle_{\mathbb{H}^0} dt+\int^T_0\langle Au(t),\psi(t)e_j\rangle_{\mathbb{H}^0} dt\\
&&\ -\langle  u_0, \psi(0)e_j\rangle_{\mathbb{H}^0} +\int^T_0\langle B(u(t),u(t)),\psi(t)e_j\rangle_{\mathbb{H}^{0}} dt\\
&&\ +\int^T_0\langle \mathcal{P}g_N(|u(t)|^2)u(t),\psi(t)e_j\rangle_{\mathbb{H}^{0}} dt.
\end{eqnarray*}
For the remain term $\int^T_0\langle \int_{\mathbb{Z}}\sigma(t, u^n(t),z)(g_n(t,z)-1)\vartheta(dz), \psi(t)e_j\rangle_{\mathbb{H}^0} dt$, applying the same method as Proposition 4.1 in \cite{Z-Z}, and by using Lemma \ref{lem-2} and Lemma \ref{lem-4}, it gives that
\begin{eqnarray*}
&&\int^T_0\langle \int_{\mathbb{Z}}\sigma(t, u^{m'}(t),z)(g_{m'}(t,z)-1)\vartheta(dz), \psi(t)e_j\rangle_{\mathbb{H}^0} dt\\
& \rightarrow &  \int^T_0\langle \int_{\mathbb{Z}}\sigma(t, u(t),z)(g(t,z)-1)\vartheta(dz), \psi(t)e_j\rangle_{\mathbb{H}^0} dt.
\end{eqnarray*}
Therefore, we get
\begin{eqnarray*}
&&-\int^T_0\langle u(t), \psi'(t)e_j\rangle_{\mathbb{H}^0} dt+\int^T_0\langle Au(t),\psi(t)e_j\rangle_{\mathbb{H}^0} dt\\
&=&\langle  u_0, \psi(0)e_j\rangle_{\mathbb{H}^0} -\int^T_0\langle B(u(t),u(t)),\psi(t)e_j\rangle_{\mathbb{H}^{0}} dt\\
&&\ -\int^T_0\langle  \mathcal{P}g_N(|u(t)|^2)u(t),\psi(t)e_j\rangle_{\mathbb{H}^{0}} dt\\
&& \
+\int^T_0\langle \int_{\mathbb{Z}}\sigma(t, u(t),z)(g(t,z)-1)\vartheta(dz), \psi(t)e_j\rangle_{\mathbb{H}^0} dt.
\end{eqnarray*}
Based on the above, applying the same method as the proof of Theorem 3.1 in \cite{Temam-1}, we obtain $u=u^g$.

In the following, we aim to prove $u^{m'}\rightarrow u$ in $C([0,T];\mathbb{H}^1)$.
Let $Z^{m'}=u^{m'}-u$, then we have
\begin{eqnarray*}
dZ^{m'}(t)=(F(u^{m'})-F(u))dt+\int_{\mathbb{Z}}[\sigma(t,u^{m'}(t),z)(g_{m'}(t,z)-1)-\sigma(t,u,z)(g(t,z)-1)]\vartheta(dz)dt,
\end{eqnarray*}
with $Z^{m'}(0)=0$.

By the chain rule, we get
\begin{eqnarray*}
\|Z^{m'}(t)\|^2_{\mathbb{H}^1}&=&2\int^t_0\langle Z^{m'}(s),F(u^{m'})-F(u) \rangle_{\mathbb{H}^1}ds\\
&&\ +2\int^t_0\langle Z^{m'}(s),\int_{\mathbb{Z}}[\sigma(s,u^{m'}(s),z)(g_{m'}(s,z)-1)-\sigma(s,u,z)(g(s,z)-1)]\vartheta(dz)\rangle_{\mathbb{H}^1}ds\\
&=:& J^{m'}_1(t)+J^{m'}_2(t).
\end{eqnarray*}
By (\ref{equ-55}), it follows that
\begin{eqnarray*}
J^{m'}_1(t)\leq -\frac{1}{2}\int^t_0\|Z^{m'}(s)\|^2_{\mathbb{H}^2}ds+C\int^t_0(1+\|u^{m'}\|^4_{\mathbb{H}^1}+\|u\|^4_{\mathbb{H}^1}+\|u\|^2_{\mathbb{H}^2})\|Z^{m'}(s)\|^2_{\mathbb{H}^1}ds,
\end{eqnarray*}
and
\begin{eqnarray*}
J^{m'}_2(t)&=& 2\int^t_0\langle Z^{m'}(s),\int_{\mathbb{Z}}[\sigma(s,u^{m'}(s),z)(g_{m'}(s,z)-1)-G(s,u,z)(g(s,z)-1)]\vartheta(dz)\rangle_{\mathbb{H}^1}ds\\
&=& 2\int^t_0\langle Z^{m'}(s),\int_{\mathbb{Z}}[(\sigma(s,u^{m'}(s),z)-\sigma(s,u(s),z))(g(s,z)-1)]\vartheta(dz)\rangle_{\mathbb{H}^1}ds\\
&&\ +2\int^t_0\langle Z^{m'}(s),\int_{\mathbb{Z}}[\sigma(s,u^{m'}(s),z)(g_{m'}(s,z)-1)-\sigma(s,u^{m'}(s),z)(g(s,z)-1)]\vartheta(dz)\rangle_{\mathbb{H}^1}ds\\
&=:& K^{m'}_1(t)+K^{m'}_2(t).
\end{eqnarray*}
It's easy to deduce that
\begin{eqnarray*}
K^{m'}_1(t)&\leq & 2\int^t_0\|Z^{m'}\|_{\mathbb{H}^1}\int_{\mathbb{Z}}\|\sigma(s,u^{m'}(s),z)-\sigma(s,u(s),z))\|_{\mathbb{H}^1}|g(s,z)-1|\vartheta(dz)ds\\
&\leq & 2\int^t_0\|Z^{m'}\|^2_{\mathbb{H}^1}\Big(\int_{\mathbb{Z}}\|\sigma(s,z)\|_{1,\mathbb{H}^1}|g(s,z)-1|\vartheta(dz)\Big)ds.
\end{eqnarray*}
Collecting all the above estimates, we get
\begin{eqnarray*}
\|Z^{m'}(t)\|^2_{\mathbb{H}^1}+\frac{1}{2}\int^t_0\|Z^{m'}(s)\|^2_{\mathbb{H}^2}ds&\leq& C\int^t_0(1+\|u^{m'}\|^4_{\mathbb{H}^1}+\|u\|^4_{\mathbb{H}^1}+\|u\|^2_{\mathbb{H}^2})\|Z^{m'}(s)\|^2_{\mathbb{H}^1}ds\\
&&\ +2\int^t_0\|Z^{m'}\|^2_{\mathbb{H}^1}\Big(\int_{\mathbb{Z}}\|\sigma(s,z)\|_{1,\mathbb{H}^1}|g(s,z)-1|\vartheta(dz)\Big)ds+K^{m'}_2(t).
\end{eqnarray*}
Setting
\[
\psi(s)=C(1+\|u^{m'}\|^4_{\mathbb{H}^1}+\|u\|^4_{\mathbb{H}^1}+\|u\|^2_{\mathbb{H}^2})+2\int_{\mathbb{Z}}\|\sigma(s,z)\|_{1,\mathbb{H}^1}|g(s,z)-1|\vartheta(dz),
\]
we have
\begin{eqnarray*}
\|Z^{m'}(t)\|^2_{\mathbb{H}^1}+\frac{1}{2}\int^t_0\|Z^{m'}(s)\|^2_{\mathbb{H}^2}ds\leq \int^t_0\psi(s)\|Z^{m'}(s)\|^2_{\mathbb{H}^1}ds+K^{m'}_2(t).
\end{eqnarray*}
Hence, we obtain
\begin{eqnarray*}
\exp\Big(-\int^t_0\psi(s)ds\Big)\|Z^{m'}(t)\|^2_{\mathbb{H}^1}\leq \int^T_0K^{m'}_2(s)ds.
\end{eqnarray*}
By Lemma \ref{lem-2}, it follows that
\[
\int^T_0\psi(s)ds<\infty,
\]
which implies that
\begin{eqnarray}\label{equ-56}
\sup_{t\in [0,T]}\|Z^{m'}(t)\|^2_{\mathbb{H}^1}\leq \exp\Big(\int^t_0\psi(s)ds\Big)\int^T_0K^{m'}_2(s)ds.
\end{eqnarray}
Using the same method as (4.61) in \cite{Z-Z}, we have
\[
\int^T_0K^{m'}_2(s)ds\rightarrow 0, \quad {\rm{as}}\quad m'\rightarrow \infty.
\]
Therefore, by (\ref{equ-56}), we get
\[
\lim_{m'\rightarrow \infty}\sup_{t\in [0,T]}\|Z^{m'}(t)\|^2_{\mathbb{H}^1}=0,
\]
which implies the desired result.

\end{proof}

Recall that (\ref{equ-39}) has a unique strong solution $u^{\varepsilon}$ for every $\varepsilon>0$, which defines a measurable mapping $\mathcal{G}^{\varepsilon}: \bar{\mathbb{M}}\rightarrow \mathcal{D}([0,T];\mathbb{H}^1) $ such that, for any Poisson random measure $n^{\varepsilon^{-1}}$ on $[0,T]\times \mathbb{Z}$ with intensity measure $\varepsilon^{-1}\lambda_T\otimes \vartheta$ given on some probability space,
$\mathcal{G}^{\varepsilon}(\varepsilon {n}^{\varepsilon^{-1}})$ is the unique solution of (\ref{equ-39}) with $\tilde{\eta}^{\varepsilon^{-1}}$ replaced by $\tilde{n}^{\varepsilon^{-1}}$.

Let $\varphi_{\varepsilon}\in \mathcal{U}^M$ and $\vartheta_{\varepsilon}=\frac{1}{\varphi_{\varepsilon}}$. The following lemma was proved by Budhiraja et al. \cite{B-D-M}.
\begin{lemma}\label{lem-7} Recall $(\bar{\eta},\bar{\vartheta}_T) $ are introduced in Section \ref{ss-1}.
\begin{eqnarray*}
\mathcal{E}^{\varepsilon}_t(\vartheta_{\varepsilon})&:=&\exp\Big\{\int_{(0,t)\times \mathbb{Z}\times [0,\varepsilon^{-1}]}\log(\vartheta_{\varepsilon}(s,z)) \bar{\eta}(dsdzdr)\\
&&\ +\int_{(0,t)\times \mathbb{Z}\times [0,\varepsilon^{-1}]}(-\vartheta_{\varepsilon}(s,z)+1) \bar{\vartheta}_T(dsdzdr)\Big\},
\end{eqnarray*}
is an $\{\bar{\mathcal{F}}_t\}-$martingale. Then
\[
\mathbb{Q}^{\varepsilon}_t(G)=\int_{G}\mathcal{E}^{\varepsilon}_t(\vartheta_{\varepsilon})d \bar{\mathbb{P}},\quad {\rm{for }}\quad G\in \mathcal{B}(\bar{\mathbb{M}})
\]
defines a probability measure on $\bar{\mathbb{M}}$.
\end{lemma}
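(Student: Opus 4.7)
The plan is to recognize $\mathcal{E}^{\varepsilon}_t(\vartheta_{\varepsilon})$ as the Dol\'eans--Dade exponential of a purely discontinuous martingale built from the compensated Poisson random measure $\tilde{\bar\eta}^{\varepsilon^{-1}}$, and then pass from local-martingale to true-martingale via a truncation/entropy argument. The probability measure claim will then follow for free.

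First, I would rewrite the exponent by adding and subtracting the compensator. Splitting $\bar{\eta}(ds\,dz\,dr) = \tilde{\bar{\eta}}(ds\,dz\,dr) + \bar{\vartheta}_T(ds\,dz\,dr)$ on $(0,t)\times\mathbb{Z}\times[0,\varepsilon^{-1}]$ and combining the two integrals against $\bar\vartheta_T$ gives
\begin{eqnarray*}
\mathcal{E}^{\varepsilon}_t(\vartheta_{\varepsilon})
= \exp\Big\{\int_{(0,t)\times\mathbb{Z}\times[0,\varepsilon^{-1}]}\log\vartheta_\varepsilon(s,z)\,\tilde{\bar\eta}(ds\,dz\,dr)
-\int_{(0,t)\times\mathbb{Z}\times[0,\varepsilon^{-1}]}(\vartheta_\varepsilon-1-\log\vartheta_\varepsilon)\,\bar\vartheta_T(ds\,dz\,dr)\Big\}.
\end{eqnarray*}
This is exactly the standard Dol\'eans--Dade exponential $\mathcal{E}(N)_t$ associated with the local martingale $N_t:=\int_{(0,t]\times\mathbb{Z}\times[0,\varepsilon^{-1}]}(\vartheta_\varepsilon-1)\,\tilde{\bar\eta}(ds\,dz\,dr)$. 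Applying the It\^o formula of Lemma \ref{lem-1} to the function $\phi(x)=e^x$ and computing the jump-correction term (using that a jump of $N$ of size $\vartheta_\varepsilon(s,z)-1$ occurs at each atom of $\bar\eta$) shows that $M_t:=\mathcal{E}^{\varepsilon}_t(\vartheta_\varepsilon)$ satisfies
\begin{eqnarray*}
M_t = 1+\int_0^t\int_{\mathbb{Z}\times[0,\varepsilon^{-1}]}M_{s-}\bigl(\vartheta_\varepsilon(s,z)-1\bigr)\,\tilde{\bar\eta}(ds\,dz\,dr),
\end{eqnarray*}
and hence $M_t$ is a nonnegative $\{\bar{\mathcal{F}}_t\}$-local martingale, a fortiori an $\{\bar{\mathcal{F}}_t\}$-supermartingale with $\bar{\mathbb{E}}[M_t]\le 1$.

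The main obstacle is upgrading $M_t$ from local martingale to true martingale, since $\vartheta_\varepsilon=1/\varphi_\varepsilon$ need not be bounded (indeed $\varphi_\varepsilon$ only satisfies the entropy bound $L_T(\varphi_\varepsilon)\le M$ a.s.). I would handle this by a simultaneous truncation and localization. Set $\vartheta^{(n)}_\varepsilon:=(\vartheta_\varepsilon\wedge n)\vee n^{-1}$ and let $M^{(n)}_t$ be the corresponding Dol\'eans--Dade exponential. Since $n^{-1}\le\vartheta^{(n)}_\varepsilon\le n$ and the integration region has finite intensity $\varepsilon^{-1}T\vartheta(\mathbb{Z})$-wise on compacts, a direct moment computation (or Kazamaki/Novikov-type criterion for PRMs: checking $\bar{\mathbb{E}}\exp\int l(\vartheta^{(n)}_\varepsilon)\,\bar\vartheta_T<\infty$, which is trivial from the truncation) gives that $M^{(n)}_t$ is a bounded-exponent, hence genuine, martingale with $\bar{\mathbb{E}}[M^{(n)}_t]=1$.

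It then remains to pass $n\to\infty$. Dominating $M^{(n)}_t$ requires the entropy control: the condition $\varphi_\varepsilon\in\mathcal{U}^M$ is exactly what makes $\int l(\vartheta_\varepsilon)\,\bar\vartheta_T$ finite and, by convexity of $l$, produces the uniform integrability of $\{M^{(n)}_t\}_n$ that promotes a.s.\ convergence $M^{(n)}_t\to M_t$ to $L^1(\bar{\mathbb{P}})$ convergence. Taking limits in $\bar{\mathbb{E}}[M^{(n)}_t]=1$ yields $\bar{\mathbb{E}}[M_t]=1$, so $M_t$ is a genuine $\{\bar{\mathcal{F}}_t\}$-martingale (the supermartingale plus constant expectation trick). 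Finally, since $M_t\ge 0$ $\bar{\mathbb{P}}$-a.s.\ and $\bar{\mathbb{E}}[M_t]=1$, the formula $\mathbb{Q}^\varepsilon_t(G)=\int_G M_t\,d\bar{\mathbb{P}}$ defines a probability measure on $(\bar{\mathbb{M}},\mathcal{B}(\bar{\mathbb{M}}))$. (This is essentially Lemma 2.3 in \cite{B-D-M}; the entropy bound is the only non-routine ingredient, everything else is standard exponential-martingale theory for Poisson random measures.)
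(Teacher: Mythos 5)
The paper does not actually prove this lemma: it is stated as a known result imported verbatim from Budhiraja--Dupuis--Maroulas \cite{B-D-M}, so there is no in-paper argument to compare against. Your proposal reconstructs the standard proof from that reference, and the skeleton is right: the algebraic identification of $\mathcal{E}^{\varepsilon}_t(\vartheta_{\varepsilon})$ with the Dol\'eans--Dade exponential of $N_t=\int(\vartheta_{\varepsilon}-1)\,\tilde{\bar\eta}$ is correct (your rewriting of the exponent checks out), the SDE $M_t=1+\int M_{s-}(\vartheta_{\varepsilon}-1)\,\tilde{\bar\eta}$ gives the nonnegative local-martingale/supermartingale property, and the ``supermartingale with constant expectation'' trick is the right way to close.

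Two places where the sketch is thinner than the actual argument in \cite{B-D-M} deserve flagging. First, truncating $\vartheta_{\varepsilon}$ into $[n^{-1},n]$ does not by itself make the problem finite-activity: $\vartheta$ is only $\sigma$-finite on $\mathbb{Z}$, and the entropy bound $L_T(\varphi_{\varepsilon})\le M$ controls $\{|\varphi_{\varepsilon}-1|>\delta\}$ for each $\delta>0$ but not $\{\varphi_{\varepsilon}\neq 1\}$ itself, which may have infinite $\vartheta_T$-measure; the truncation therefore has to censor in the $z$-variable (restrict to compacts / to $\{|\vartheta_{\varepsilon}-1|>\delta\}$) as well as in the value of $\vartheta_{\varepsilon}$ before you can claim $\bar{\mathbb{E}}[M^{(n)}_t]=1$ by elementary means. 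Second, the uniform integrability of $\{M^{(n)}_t\}_n$ is the genuinely nontrivial step, and ``by convexity of $l$'' does not yet deliver it: the bound $L_T(\varphi_{\varepsilon})\le M$ is on $l(\varphi_{\varepsilon})$, whereas the exponent of $M_t$ involves $l(\vartheta_{\varepsilon})=l(1/\varphi_{\varepsilon})$, and the relevant control is on the relative entropy $\bar{\mathbb{E}}[M^{(n)}_t\log M^{(n)}_t]$, which is an integral of $l(\varphi_{\varepsilon})$ \emph{under the tilted measure}. Making that identification precise is exactly the content of the argument in \cite{B-D-M}, so your proof is best read as a correct roadmap whose hardest step is still being delegated to the cited lemma --- which is also what the authors themselves do.
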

Since $\varepsilon {\eta}^{\varepsilon^{-1}\varphi_{\varepsilon}}$ under $\mathbb{Q}^{\varepsilon}_T$ has the same law as that of $\varepsilon {\eta}^{\varepsilon^{-1}}$ under $\bar{\mathbb{P}}$, it follows that
there exists a unique solution to the following controlled stochastic evolution equations $\tilde{u}^{\varepsilon}$:
\begin{eqnarray}\notag
\tilde{u}^{\varepsilon}(t)&=&u_0-\int^t_0 A \tilde{u}^{\varepsilon}(s)ds-\int^t_0 B(\tilde{u}^{\varepsilon}(s),\tilde{u}^{\varepsilon}(s) )ds\\ \notag
&& \ -\int^t_0 \mathcal{P}g_N(|\tilde{u}^{\varepsilon}(s)|^2)\tilde{u}^{\varepsilon}(s)ds+\int^t_0\int_{\mathbb{Z}}\sigma(s,\tilde{u}^{\varepsilon}(s),z)(\varepsilon {\eta}^{\varepsilon^{-1}\varphi_{\varepsilon}}(ds,dz)-\vartheta(dz)ds)\\ \notag
&=& u_0-\int^t_0 A\tilde{u}^{\varepsilon}(s)ds-\int^t_0 B(\tilde{u}^{\varepsilon}(s),\tilde{u}^{\varepsilon}(s))ds\\ \notag
&& \ -\int^t_0 \mathcal{P}g_N(|\tilde{u}^{\varepsilon}(s)|^2)\tilde{u}^{\varepsilon}(s)ds+\int^t_0\int_{\mathbb{Z}}\sigma(s,\tilde{u}^{\varepsilon}(s),z)(\varphi_{\varepsilon}(s,z)-1)\vartheta(dz)ds)\\
\label{equ-58}
&&\ +\int^t_0\int_{\mathbb{Z}}\varepsilon \sigma(s,\tilde{u}^{\varepsilon}(s),z)( {\eta}^{\varepsilon^{-1}\varphi_{\varepsilon}}(ds,dz)-\varepsilon^{-1}\varphi_{\varepsilon}(s,z)\vartheta(dz)ds),
\end{eqnarray}
and we have
\begin{eqnarray}\label{equ-59}
\mathcal{G}^{\varepsilon}(\varepsilon {\eta}^{\varepsilon^{-1}\varphi_{\varepsilon}})=\tilde{u}^{\varepsilon}.
\end{eqnarray}
Before proving (ii) in \textbf{Condition A}, we make a priori estimates of $\tilde{u}^{\varepsilon}$.
\begin{lemma}\label{lem-8}
There exists $\varepsilon_0>0$ such that
\begin{eqnarray}\label{equ-60}
\sup_{0<\varepsilon<\varepsilon_0}\Big[\mathbb{E}\sup_{0\leq t\leq T}\|\tilde{u}^{\varepsilon}\|^2_{\mathbb{H}^1}+\mathbb{E}\int^T_0\|\tilde{u}^{\varepsilon}(t)\|^2_{\mathbb{H}^2}dt\Big]<\infty,\\
\label{equ-60-1}
\sup_{0<\varepsilon<\varepsilon_0}\Big[\mathbb{E}\sup_{0\leq t\leq T}\|\tilde{u}^{\varepsilon}\|^6_{\mathbb{H}^1}+\mathbb{E}\int^T_0\|\tilde{u}^{\varepsilon}(t)\|^2_{\mathbb{H}^2}\|\tilde{u}^{\varepsilon}\|^4_{\mathbb{H}^1}dt\Big]<\infty,
\end{eqnarray}
and for $\alpha\in (0, \frac{1}{2})$, there exists a positive constant $C_{\alpha}$ such that
\begin{eqnarray}\label{equ-61}
\sup_{0<\varepsilon<\varepsilon_0}\mathbb{E}[\|\tilde{u}^{\varepsilon}\|_{W^{\alpha,2}([0,T]; \mathbb{H}^0)}]\leq C_{\alpha}<\infty.
\end{eqnarray}
Thus, the family $\{\tilde{u}^{\varepsilon}; 0<\varepsilon<\varepsilon_0\}$ is tight in $L^2([0,T];\mathbb{H}^1)$.

\end{lemma}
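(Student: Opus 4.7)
The plan is to mimic the a priori bounds of Theorem~\ref{thm-1} (equations (\ref{equ-12})--(\ref{equ-13})) and Theorem~\ref{thm-2} (equations (\ref{equ-45})--(\ref{equ-46})) and adapt them to the controlled equation (\ref{equ-58}). Two features distinguish the present situation from those earlier estimates: the deterministic shift $\int_{\mathbb{Z}}\sigma(s,\tilde{u}^{\varepsilon},z)(\varphi_{\varepsilon}(s,z)-1)\vartheta(dz)\,ds$, and the small-amplitude/large-intensity jump martingale with amplitude $\varepsilon\sigma$ and intensity $\varepsilon^{-1}\varphi_{\varepsilon}\vartheta$. Both are tamed using \textbf{Hypothesis H1} together with Lemma~\ref{lem-2}, which supply uniform-in-$\mathcal{U}^M$ bounds on $\int\!\!\int\|\sigma(s,z)\|_{i,\mathbb{H}^j}|\varphi_{\varepsilon}-1|\vartheta\,ds$ and $\int\!\!\int\|\sigma(s,z)\|^2_{i,\mathbb{H}^j}(\varphi_{\varepsilon}+1)\vartheta\,ds$.

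For (\ref{equ-60}), after introducing the localising stopping time $\tau^{\varepsilon}_R=\inf\{t\in[0,T]:\|\tilde{u}^{\varepsilon}(t)\|^2_{\mathbb{H}^1}\geq R\}$, I would apply Lemma~\ref{lem-1} with $\phi(u)=\|u\|^2_{\mathbb{H}^1}$. The deterministic-drift contribution $\langle F(\tilde{u}^{\varepsilon}),\tilde{u}^{\varepsilon}\rangle_{\mathbb{H}^1}$ is handled by (\ref{equ-7}), producing the good coercive term $-\tfrac12\|\tilde{u}^{\varepsilon}\|^2_{\mathbb{H}^2}$. The shift part is estimated exactly as in Step~1 of Theorem~\ref{thm-2}: using $\|\sigma\|_{\mathbb{H}^1}\leq\|\sigma(s,z)\|_{0,\mathbb{H}^1}(1+\|\tilde{u}^{\varepsilon}\|_{\mathbb{H}^1})$ one obtains an affine bound whose time-averaged coefficient is uniformly bounded on $\mathcal{U}^M$ by Lemma~\ref{lem-2}(i). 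For the small-noise jump integral, the Burkholder--Davis--Gundy inequality and the quadratic-variation identity of a compensated Poisson integral (with intensity $\varepsilon^{-1}\varphi_{\varepsilon}\vartheta$ and amplitude $\varepsilon\sigma$) together yield
\[
\mathbb{E}\sup_{0\leq s\leq t\wedge\tau^{\varepsilon}_R}\Bigl|\int_0^s\!\!\int_{\mathbb{Z}}2\langle\tilde{u}^{\varepsilon}(r-),\varepsilon\sigma\rangle_{\mathbb{H}^1}\tilde{\eta}^{\varepsilon^{-1}\varphi_{\varepsilon}}(dr,dz)\Bigr|\leq \tfrac{1}{2}\mathbb{E}\sup_{0\leq s\leq t\wedge\tau^{\varepsilon}_R}\|\tilde{u}^{\varepsilon}\|^2_{\mathbb{H}^1}+C\varepsilon\,\mathbb{E}\!\int_0^{t\wedge\tau^{\varepsilon}_R}\!\!\int_{\mathbb{Z}}\|\sigma\|^2_{\mathbb{H}^1}\varphi_{\varepsilon}\vartheta(dz)ds,
\]
and the It\^o correction $\int\!\!\int\|\varepsilon\sigma\|^2_{\mathbb{H}^1}\eta^{\varepsilon^{-1}\varphi_{\varepsilon}}$ contributes an analogous $\varepsilon$-prefactored term. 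Applying \textbf{H0}(B) to obtain $\|\sigma\|^2_{\mathbb{H}^1}\leq\|\sigma(s,z)\|^2_{0,\mathbb{H}^1}(1+2\|\tilde{u}^{\varepsilon}\|^2_{\mathbb{H}^1})$ and then Lemma~\ref{lem-2}(i) for the coefficient, these $\varepsilon$-prefactored pieces become negligible once $\varepsilon<\varepsilon_0$ is small enough, and Gronwall's inequality followed by $R\to\infty$ (via Fatou) delivers (\ref{equ-60}).

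The sixth-moment estimate (\ref{equ-60-1}) follows the same scheme applied to $\phi(u)=\|u\|^6_{\mathbb{H}^1}$. The new jump-correction term from Lemma~\ref{lem-1} is bounded using the Taylor-type inequality (\ref{equ-14}) as
\[
\bigl|\|\tilde{u}^{\varepsilon}+\varepsilon\sigma\|^6_{\mathbb{H}^1}-\|\tilde{u}^{\varepsilon}\|^6_{\mathbb{H}^1}-6\|\tilde{u}^{\varepsilon}\|^4_{\mathbb{H}^1}\langle\tilde{u}^{\varepsilon},\varepsilon\sigma\rangle_{\mathbb{H}^1}\bigr|\leq C\bigl(\|\tilde{u}^{\varepsilon}\|^4_{\mathbb{H}^1}\varepsilon^2\|\sigma\|^2_{\mathbb{H}^1}+\varepsilon^6\|\sigma\|^6_{\mathbb{H}^1}\bigr),
\]
and after integration against the intensity $\varepsilon^{-1}\varphi_{\varepsilon}\vartheta$ this becomes the sum of an $\varepsilon$-prefactored $\int\|\sigma\|^2\varphi_{\varepsilon}\vartheta$ and an $\varepsilon^5$-prefactored $\int\|\sigma\|^6\varphi_{\varepsilon}\vartheta$; both are controlled through (\ref{equ-5}), (\ref{equ-5-11}) and Lemma~\ref{lem-2}. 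Another application of BDG on the martingale increment together with Gronwall closes the estimate exactly as in Step~1 of Theorem~\ref{thm-1}.

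The fractional Sobolev bound (\ref{equ-61}) is obtained by decomposing $\tilde{u}^{\varepsilon}(t)=u_0+\int_0^t F(\tilde{u}^{\varepsilon})\,ds+\int_0^t\!\!\int_{\mathbb{Z}}\sigma(\varphi_{\varepsilon}-1)\vartheta(dz)\,ds+\varepsilon M^{\varepsilon}(t)$ with $M^{\varepsilon}$ the compensated jump martingale, and estimating each piece separately. The deterministic-drift piece is handled by (\ref{equ-48})--(\ref{equ-50}) using the bounds (\ref{equ-60})--(\ref{equ-60-1}) already established; the shift piece is estimated by Lemma~\ref{lem-2}(i) together with \textbf{H0}(B); and for the martingale piece the It\^o isometry gives $\mathbb{E}\|\varepsilon M^{\varepsilon}(t)-\varepsilon M^{\varepsilon}(s)\|^2_{\mathbb{H}^0}\leq C\varepsilon\,|t-s|$, which, following the same computation as for (4.20) in \cite{Z-Z}, yields a uniform bound on $\mathbb{E}\|\varepsilon M^{\varepsilon}\|^2_{W^{\alpha,2}([0,T];\mathbb{H}^0)}$. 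Tightness of $\{\tilde{u}^{\varepsilon}\}$ in $L^2([0,T];\mathbb{H}^1)$ then follows from Lemma~\ref{lem-5} with $B_0=\mathbb{H}^2$, $B=\mathbb{H}^1$, $B_1=\mathbb{H}^0$, combined with Chebyshev's inequality. The main obstacle will be the joint handling of the two scaling factors $\varepsilon$ and $\varepsilon^{-1}$ in the jump integral: the amplitude $\varepsilon$ and intensity $\varepsilon^{-1}\varphi_{\varepsilon}$ must be shown to combine into $O(\varepsilon)$ contributions that can either be absorbed into the coercive term $-\tfrac12\|\tilde{u}^{\varepsilon}\|^2_{\mathbb{H}^2}$ or made arbitrarily small by choosing $\varepsilon_0$ small; this is precisely why the sixth-moment assumption (\ref{equ-5-11}) is imposed and why the lemma is stated with a threshold $\varepsilon_0$ rather than for all $\varepsilon>0$.
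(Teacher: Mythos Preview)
Your proposal is correct and follows exactly the route the paper intends: the paper's own proof is omitted, stating only that it is ``similar to the proof of (\ref{equ-45})--(\ref{equ-46}) and Lemma~4.2 in \cite{Z-Z}'', and your sketch is precisely a fleshing-out of that combination --- the skeleton-equation energy estimates for the deterministic shift term via Lemma~\ref{lem-2}, together with the BDG/Taylor-remainder treatment of the $\varepsilon$-scaled compensated jump integral as in Theorem~\ref{thm-1}. One small caveat: the intermediate claim $\mathbb{E}\|\varepsilon M^{\varepsilon}(t)-\varepsilon M^{\varepsilon}(s)\|^2_{\mathbb{H}^0}\leq C\varepsilon\,|t-s|$ is slightly too strong as written (the quantity $\int_{\mathbb{Z}}\|\sigma(r,z)\|^2_{0,\mathbb{H}^0}\varphi_{\varepsilon}(r,z)\vartheta(dz)$ need not be bounded pointwise in $r$), but since you already indicate that the computation follows (4.20) in \cite{Z-Z}, the correct estimate --- bounding the $W^{\alpha,2}$ seminorm via Fubini and the $L^1$-in-time bound (\ref{equ-37}) --- goes through for $\alpha<\tfrac12$ without that stronger pointwise claim.
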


The proof of Lemma \ref{lem-8} is similar to the proof of (\ref{equ-45})-(\ref{equ-46}) and Lemma 4.2 in \cite{Z-Z}, hence, we omit it here.

To get our main results, we need to prove that $\{\tilde{u}^{\varepsilon}\}_{0<\varepsilon<\varepsilon_0}$ is tight in $\mathcal{D}([0,T];D(A^{-\alpha}))$ for some $\alpha\geq0$. Firstly,
we recall the following two lemmas related to the tightness of $\{\tilde{u}^{\varepsilon};{0<\varepsilon<\varepsilon_0}\}$. The proof can be found in \cite{J} and \cite{A-1}.
\begin{lemma}
Let $E$ be a separable Hilbert space with the inner product $(\cdot,\cdot)$. For an orthonormal basis $\{\xi_k\}_{k\in \mathbb{N}}$ in $E$, define the function $r^2_{L}:E\rightarrow \mathbb{R}^{+}$ by
\[
r^2_{L}(x)=\sum_{k\geq L+1}(x,\xi_k)^2,\quad L\in \mathbb{N}.
\]
Let $E_0$ be a total and closed under addition subset of $E$. Then a sequence $\{X_{\varepsilon}\}_{\varepsilon\in(0,1)}$ of stochastic process with trajectories in $\mathcal{D}([0,T];E)$ iff the following \textbf{Condition B } holds:
 \begin{description}
  \item[1.] $\{X_{\varepsilon}\}_{\varepsilon\in(0,1)}$ is $E_0-$weakly tight, that is, for every $h\in E_0$, $\{(X_{\varepsilon},h)\}_{\varepsilon\in(0,1)}$ is tight in $\mathcal{D}([0,T];\mathbb{R})$,
  \item[2.] For every $\eta>0$,
  \begin{eqnarray}\label{eq-65}
\lim_{L\rightarrow \infty}\lim_{\varepsilon\rightarrow 0}\mathbb{P}\Big(r^2_{L}(X_{\varepsilon}(s))>\eta\ {\rm{for\ some}}\ s\in[0,T]\Big)=0.
\end{eqnarray}
\end{description}

\end{lemma}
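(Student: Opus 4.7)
The plan is to apply a classical tightness criterion for $\mathcal{D}([0,T];E)$, of Jakubowski type, which asserts that $\{X_\varepsilon\}$ is tight iff (J1) for every $\eta>0$ there exists a compact $K_\eta\subset E$ with $\inf_\varepsilon \mathbb{P}(X_\varepsilon(t)\in K_\eta \text{ for all }t\in[0,T])\geq 1-\eta$, and (J2) there is a point-separating family $\mathbb{F}$ of continuous functions $E\to\mathbb{R}$ such that $f\circ X_\varepsilon$ is tight in $\mathcal{D}([0,T];\mathbb{R})$ for each $f\in\mathbb{F}$. I would take $\mathbb{F}=\{(\cdot,h):h\in E_0\}$, for which the separation of points is automatic from the totality of $E_0$, so that (J2) becomes exactly Condition B(1). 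The task thus reduces to showing (J1) $\Leftrightarrow$ Condition B(2), given B(1). The necessity of B(2) is immediate from the characterization of precompact subsets of a separable Hilbert space, namely $K\subset E$ is precompact iff $K$ is norm-bounded and $\sup_{x\in K} r_L^2(x)\to 0$ as $L\to\infty$; applying this to the $K_\eta$ furnished by (J1) yields (\ref{eq-65}).

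For the sufficiency, I would construct $K_\eta$ by a diagonal procedure. Using Condition B(2), for each $k\geq 1$ I pick $L_k$ and $\varepsilon_k>0$ such that
\[
\mathbb{P}\Big(\sup_{s\in[0,T]} r_{L_k}^2(X_\varepsilon(s))>\tfrac{1}{k}\Big)<\eta\,2^{-k-1}\quad\text{for all }0<\varepsilon<\varepsilon_k,
\]
whence $A:=\{x\in E:r_{L_k}^2(x)\leq k^{-1}\ \forall k\}$ has uniform tail decay and $\mathbb{P}(X_\varepsilon(t)\in A\ \forall t\in[0,T])\geq 1-\eta/2$ for small enough $\varepsilon$. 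To gain norm-boundedness, I invoke Condition B(1): for each $k\leq L_1$, the real process $(X_\varepsilon,\xi_k)$ is tight in $\mathcal{D}([0,T];\mathbb{R})$ (after an approximation argument, since the basis vectors $\xi_k$ need not belong to $E_0$), so that $\sup_s|(X_\varepsilon(s),\xi_k)|$ is bounded in probability; choose constants $M_k$ with $\mathbb{P}(\sup_s|(X_\varepsilon(s),\xi_k)|>M_k)<\eta/(4L_1)$, set $R^2:=\sum_{k=1}^{L_1}M_k^2+1$, and define $K_\eta:=\overline{A\cap\{\|x\|\leq R\}}$. This $K_\eta$ is bounded with uniform tail decay, hence compact in $E$, and a union bound produces $\mathbb{P}(X_\varepsilon(t)\in K_\eta\ \forall t)\geq 1-\eta$.

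The principal obstacle is the approximation step in the preceding paragraph: the eigen-basis $\{\xi_k\}$ used to define $r_L^2$ need not lie in $E_0$, so transferring tightness from functionals $(\cdot,h)$ with $h\in E_0$ to the coordinate functionals $(\cdot,\xi_k)$ is not immediate. This is handled by noting that a total subset closed under addition has a rational-linear span dense in $E$, and tightness in $\mathcal{D}([0,T];\mathbb{R})$ is preserved under sufficiently uniform approximation of the defining functional (a standard $3\eta$ argument combined with the Aldous-type modulus of continuity for the approximating linear combinations). Once this bridge is set up, the rest is routine: Markov's inequality, a diagonal extraction, and the precompactness characterization in a separable Hilbert space.
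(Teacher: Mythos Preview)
The paper does not prove this lemma; it states it as a recalled result and refers the reader to Jakubowski \cite{J} (and Aldous \cite{A-1} for the companion lemma) with the sentence ``The proof can be found in \cite{J} and \cite{A-1}.'' There is thus no in-paper argument to compare against. Your proposal is exactly the natural derivation of this formulation from Jakubowski's general criterion, so in spirit you are reconstructing the cited source rather than offering an alternative.

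Your sketch is sound in outline, and you are right to flag the approximation step as the only real obstacle. Two remarks on that point. First, ``closed under addition'' by itself does not give you a dense rational-linear span (you only get $\mathbb{Z}_{\ge 0}$-combinations), so the density claim as you phrase it is too quick; what actually makes Jakubowski's argument work is that tightness of $(X_\varepsilon,h)$ for $h$ in a total, additively closed family can be propagated to arbitrary $h\in E$ by writing $h$ as a limit of differences of elements of $E_0$ and controlling the error via the compact containment one is simultaneously building---the two conditions interact rather than being proved independently. Second, in the paper's only application (Lemma~\ref{lem-13}) one has $E=D(A^{-\alpha})$ and $E_0=D(A^{\alpha})$, a genuine subspace containing the eigenbasis, so the obstacle you identify simply does not arise there; this is presumably why the authors felt no need to elaborate.
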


Consider a sequence $\{\tau_{\varepsilon},\delta_{\varepsilon}\}$ satisfying the following \textbf{Condition C}:
\begin{description}
  \item[(1)] For each $\varepsilon$, $\tau_{\varepsilon}$ ia a stopping time with respect to the natural $\sigma-$fields, and takes only finitely many values.
  \item[(2)] The constant $\delta_{\varepsilon}\in[0,T]$ satisfying $\delta_{\varepsilon}\rightarrow 0$ as $\varepsilon\rightarrow 0$.
\end{description}
Let $\{Y_{\varepsilon}\}_{\varepsilon\in(0,1)}$ be a sequence of random elements of $\mathcal{D}([0,T];\mathbb{R})$. For $f\in \mathcal{D}([0,T];\mathbb{R})$,  let $J(f)$ denote the maximum of the jump $|f(t)-f(t-)|$. We introduce the following \textbf{Condition D } on $\{Y_{\varepsilon}\}$:
\begin{description}
  \item[{(I)}] For each sequence $\{\tau_{\varepsilon},\delta_{\varepsilon}\}$ satisfying \textbf{Condition C}, $Y_{\varepsilon}(\tau_\varepsilon+\delta_\varepsilon)-Y_{\varepsilon}(\tau_\varepsilon)\rightarrow 0$ in probability, as $\varepsilon\rightarrow 0$.
\end{description}
\begin{lemma}\label{lem-11}
Assume $\{Y_{\varepsilon}\}_{\varepsilon\in(0,1)}$ satisfies \textbf{Condition D}, and either $\{Y_{\varepsilon}(0)\}$ and $J(Y_{\varepsilon})$ are tight on the line or $\{Y_{\varepsilon}(t)\}$ is tight on the line for each $t\in[0,T]$, then $\{Y_{\varepsilon}\}$ is tight in $\mathcal{D}([0,T];\mathbb{R})$.
\end{lemma}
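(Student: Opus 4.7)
The plan is to recognize Lemma \ref{lem-11} as (essentially) the Aldous tightness criterion for càdlàg processes in $\mathcal{D}([0,T];\mathbb{R})$, and to prove it by combining Prokhorov's theorem with the Billingsley characterization of relative compactness in Skorohod space via the modulus $w'(f,\delta)$. First I would reduce the claim to showing that the family $\{Y_\varepsilon\}$ is uniformly stochastically bounded in supremum norm and that its Skorohod modulus $w'(Y_\varepsilon,\delta)$ is uniformly small (in probability) as $\delta\to 0$; by Prokhorov together with the Arzel\`a--Ascoli-type compactness criterion in $\mathcal{D}([0,T];\mathbb{R})$, these two properties are equivalent to tightness.

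Second, I would verify uniform stochastic boundedness of $\|Y_\varepsilon\|_\infty$. Under either hypothesis (marginal tightness at each $t$, or tightness of $Y_\varepsilon(0)$ together with $J(Y_\varepsilon)$), one introduces a finite grid $0=t_0<t_1<\cdots<t_N=T$ with mesh $\delta$, bounds $\sup_t |Y_\varepsilon(t)|$ by $\max_k |Y_\varepsilon(t_k)|$ plus the maximum oscillation inside intervals $[t_k,t_{k+1}]$, and uses Condition D applied to simple stopping times $\tau_\varepsilon\in\{t_0,\dots,t_N\}$ (which take only finitely many values as required by Condition C) with $\delta_\varepsilon=\delta$ to control the oscillation uniformly. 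In the second hypothesis case the supremum is estimated by $|Y_\varepsilon(0)|$ plus the sum of increments, which is in turn bounded using the pointwise tightness propagated along the grid plus the jump tightness $J(Y_\varepsilon)$.

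Third, and this is the heart of the proof, I would control the Skorohod modulus $w'(Y_\varepsilon,\delta)$ by contradiction using Condition D. Suppose $\liminf_{\delta\to 0}\sup_\varepsilon \mathbb{P}(w'(Y_\varepsilon,\delta)>\eta)\geq\zeta>0$ for some $\eta,\zeta>0$. Along a subsequence $\delta_\varepsilon\to 0$, on an event of probability at least $\zeta/2$ one can find a pair of times $s_\varepsilon<t_\varepsilon$ in $[0,T]$ with $t_\varepsilon-s_\varepsilon\leq \delta_\varepsilon$ at which $|Y_\varepsilon(t_\varepsilon)-Y_\varepsilon(s_\varepsilon)|>\eta/2$. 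By snapping the time axis to a grid of mesh $\delta_\varepsilon$ and letting $\tau_\varepsilon$ be the first grid point at or before $s_\varepsilon$ at which the forward increment over $2\delta_\varepsilon$ exceeds $\eta/4$ (setting $\tau_\varepsilon=T$ otherwise), one obtains a simple stopping time taking finitely many values, such that $|Y_\varepsilon(\tau_\varepsilon+2\delta_\varepsilon)-Y_\varepsilon(\tau_\varepsilon)|>\eta/4$ with probability at least $\zeta/4$ (after absorbing small error terms using the càdlàg property). This contradicts Condition D applied with the sequence $(\tau_\varepsilon, 2\delta_\varepsilon)$.

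The main obstacle is this last step: carefully fabricating the simple stopping time $\tau_\varepsilon$ satisfying Condition C from the pathwise oscillation so that its forward increment over a deterministic small window $\delta_\varepsilon$ captures the supposed failure of Skorohod modulus control. The subtlety is that the Skorohod modulus allows a single jump inside $[s_\varepsilon,t_\varepsilon]$, so one must split into cases according to whether the oscillation is generated by a large jump (handled by the jump tightness / by choosing the split point inside $[s_\varepsilon,t_\varepsilon]$) or by an accumulation of small variations (handled directly by Condition D); this is the classical Aldous dichotomy, whose full execution involves combining the two parts of the hypothesis with a careful stopping-time construction.
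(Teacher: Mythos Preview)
The paper does not actually give a proof of this lemma: immediately before stating it, the authors write ``The proof can be found in \cite{J} and \cite{A-1}'', i.e.\ they invoke Aldous (1978) and Jakubowski (1986) as references and move on. So there is no ``paper's own proof'' to compare against; your proposal is an attempt to reconstruct the classical Aldous argument that those references contain.

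As a sketch of Aldous' criterion your outline is broadly on the right track (reduce tightness in $\mathcal{D}$ to sup-norm stochastic boundedness plus smallness of the Skorohod modulus $w'$, then contradict the latter via Condition~D), but your step~3 has a genuine gap in the stopping-time construction. You define $\tau_\varepsilon$ as ``the first grid point at or before $s_\varepsilon$ at which the forward increment over $2\delta_\varepsilon$ exceeds $\eta/4$''. Deciding whether a grid point $t_k$ qualifies requires knowing $Y_\varepsilon(t_k+2\delta_\varepsilon)$, which lies in the future of $t_k$; hence $\{\tau_\varepsilon=t_k\}\notin\mathcal{F}_{t_k}$ in general, and your $\tau_\varepsilon$ is not a stopping time with respect to the natural filtration as Condition~C requires. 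Aldous' original argument instead builds a genuine stopping time (e.g.\ the first exit time of $Y_\varepsilon$ from a ball of radius $\eta/2$ after a suitably chosen grid point, or an iterated construction using successive exit times) and only afterwards discretizes it to finitely many values by rounding up; the forward-looking step is avoided because one thresholds on the size of $Y_\varepsilon$ itself rather than on a future increment. Fixing this requires reworking the contradiction argument along the lines of Aldous' Theorem~1 (or Billingsley, \emph{Convergence of Probability Measures}, 2nd ed., Theorem~16.10), where the interplay between $w'$, large jumps, and Condition~D is handled through a chain of honest stopping times.
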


Let $\tilde{u}^{\varepsilon}$ be defined by (\ref{equ-59}). We have
\begin{lemma}\label{lem-13}
$\{\tilde{u}^{\varepsilon}\}_{0<\varepsilon<\varepsilon_0}$ is tight in  $\mathcal{D}([0,T];D(A^{-\alpha}))$, for some $\alpha\geq0$.
\end{lemma}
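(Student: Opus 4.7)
The plan is to verify the tightness criterion stated as Conditions B and D just before the statement, applied with the Hilbert space $E = D(A^{-\alpha})$ for $\alpha$ chosen large enough (say $\alpha \geq 1$) that every drift term in (\ref{equ-58}) lands in $D(A^{-\alpha})$ under the a priori bounds of Lemma~\ref{lem-8}. I split the equation as
\[
\tilde{u}^{\varepsilon}(t)=u_0+\sum_{j=1}^{5}I^{\varepsilon}_{j}(t),
\]
where $I^\varepsilon_1,\ldots,I^\varepsilon_5$ correspond to the Stokes term, the convective term $B$, the taming term $\mathcal{P}g_N(|\tilde{u}^\varepsilon|^2)\tilde{u}^\varepsilon$, the controlled drift with integrand $\varphi_\varepsilon-1$, and the compensated Poisson integral, in that order. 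Since $\{\lambda_i^{\alpha}e_i\}_{i\in\mathbb{N}}$ is an orthonormal basis of $D(A^{-\alpha})$ and the span of $\{e_i\}$ is dense, it suffices to verify the tail condition (\ref{eq-65}) and to verify that each real-valued projection $\langle\tilde{u}^\varepsilon(\cdot),e_j\rangle_{D(A^{-\alpha})}$ is tight in $\mathcal{D}([0,T];\mathbb{R})$.

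For the tail bound, the $k$-th coefficient of $\tilde{u}^\varepsilon$ in $D(A^{-\alpha})$ is $\lambda_k^{-\alpha}\langle \tilde{u}^\varepsilon,e_k\rangle_{\mathbb{H}^0}$, so
\[
r_L^{2}(\tilde{u}^\varepsilon(t))=\sum_{k\geq L+1}\lambda_k^{-2\alpha}\langle\tilde{u}^\varepsilon(t),e_k\rangle_{\mathbb{H}^0}^{2}
\leq \Bigl(\sup_{k\geq L+1}\lambda_k^{-2\alpha-1}\Bigr)\|\tilde{u}^\varepsilon(t)\|_{\mathbb{H}^1}^{2}.
\]
Combined with (\ref{equ-60}) and $\lambda_k\to\infty$, this forces (\ref{eq-65}) via Markov's inequality. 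For the projection tightness I invoke Lemma~\ref{lem-11} and reduce to verifying Condition D for each term.

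For Condition D, given a finite-valued stopping time $\tau_\varepsilon$ and $\delta_\varepsilon\to 0$, I estimate $\mathbb{E}\|I^\varepsilon_j(\tau_\varepsilon+\delta_\varepsilon)-I^\varepsilon_j(\tau_\varepsilon)\|_{D(A^{-\alpha})}$ term by term. For $I^\varepsilon_1$, using $\|Au\|_{D(A^{-\alpha})}\leq C\|u\|_{\mathbb{H}^0}$ and Cauchy--Schwarz,
\[
\mathbb{E}\|I^\varepsilon_1(\tau_\varepsilon+\delta_\varepsilon)-I^\varepsilon_1(\tau_\varepsilon)\|_{D(A^{-\alpha})}\leq C\delta_\varepsilon^{1/2}\Bigl(\mathbb{E}\int_0^T\|\tilde{u}^\varepsilon\|_{\mathbb{H}^0}^{2}ds\Bigr)^{1/2},
\]
which vanishes by (\ref{equ-60}). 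For $I^\varepsilon_2$, I use the 3D estimate $\|B(u,u)\|_{D(A^{-\alpha})}\leq C\|u\|_{\mathbb{H}^1}^{2}$ (valid for $\alpha\geq 3/4$ via the embedding $L^{3/2}\hookrightarrow H^{-1/2}$) together with (\ref{equ-60}) to get a bound of order $\delta_\varepsilon$. For $I^\varepsilon_3$ a direct pointwise bound $|\mathcal{P}g_N(|u|^2)u|\leq C(|u|^{3}+|u|)$ together with (\ref{equ-60-1}) yields again an $O(\delta_\varepsilon)$ bound in $D(A^{-\alpha})$. For $I^\varepsilon_4$, I apply Lemma~\ref{lem-3}(2) with $Y_n\equiv\tilde{u}^\varepsilon$ (pathwise, using the a priori bound $\sup_t\|\tilde{u}^\varepsilon(t)\|_{\mathbb{H}^1}<\infty$ a.s.\ from (\ref{equ-60})) to obtain $\int_0^T\|\int_{\mathbb{Z}}\sigma(s,\tilde{u}^\varepsilon,z)(\varphi_\varepsilon-1)\vartheta(dz)\|_{\mathbb{H}^1}\,ds\leq \tilde{C}_M$ uniformly in $\varepsilon$, and then the uniform integrability statement (\ref{equ-38}) gives that the integral over $[\tau_\varepsilon,\tau_\varepsilon+\delta_\varepsilon]$ tends to $0$ in probability. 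Finally, for the pure jump term $I^\varepsilon_5$ I use the Itô isometry for the compensated measure $\eta^{\varepsilon^{-1}\varphi_\varepsilon}-\varepsilon^{-1}\varphi_\varepsilon \vartheta\otimes\lambda$:
\[
\mathbb{E}\|I^\varepsilon_5(\tau_\varepsilon+\delta_\varepsilon)-I^\varepsilon_5(\tau_\varepsilon)\|_{\mathbb{H}^0}^{2}
=\varepsilon\,\mathbb{E}\int_{\tau_\varepsilon}^{\tau_\varepsilon+\delta_\varepsilon}\int_{\mathbb{Z}}\|\sigma(s,\tilde{u}^\varepsilon,z)\|_{\mathbb{H}^0}^{2}\varphi_\varepsilon(s,z)\vartheta(dz)ds,
\]
which by (\ref{equ-5-1}), (\ref{equ-60}) and Lemma~\ref{lem-2} is $O(\varepsilon)$ uniformly in $\delta_\varepsilon$ and $\tau_\varepsilon$, hence tends to $0$. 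Since $\mathbb{H}^0\hookrightarrow D(A^{-\alpha})$, the same bound holds in $D(A^{-\alpha})$.

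The main obstacle is the convective term $I^\varepsilon_2$, which requires choosing $\alpha$ just large enough that $B(\cdot,\cdot)$ becomes continuous from $\mathbb{H}^1\times\mathbb{H}^1$ into $D(A^{-\alpha})$ and then matching this with the Sobolev-type bounds available from Lemma~\ref{lem-8}; taking $\alpha\geq 1$ (or any fixed $\alpha\geq 3/4$) makes all five terms compatible. Collecting these estimates, Conditions B and D are both satisfied, and the cited tightness lemmas yield tightness of $\{\tilde{u}^\varepsilon\}_{0<\varepsilon<\varepsilon_0}$ in $\mathcal{D}([0,T];D(A^{-\alpha}))$.
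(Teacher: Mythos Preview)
Your proof is correct and follows the same route as the paper: verify the tail bound~(\ref{eq-65}) from the $\mathbb H^1$ moment estimate and then check Aldous' criterion (Condition~D) termwise via Lemma~\ref{lem-11}. The paper streamlines your five-term decomposition to three by bundling the entire drift into $F(\tilde u^\varepsilon)$ and invoking the $\mathbb H^0$ bound $\|F(u)\|_{\mathbb H^0}\le C(\|u\|_{\mathbb H^1}^3+\|u\|_{\mathbb H^2})$, so that any $\alpha>0$ suffices and your separate treatment of $B$ in $D(A^{-\alpha})$ (and the attendant restriction $\alpha\ge 3/4$) is not needed.
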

\begin{proof}
Note that $\{\lambda^{\alpha}_i e_i\}_{i\in \mathbb{N}}$ is a complete orthonormal system of $D(A^{-\alpha})$. Since
\begin{eqnarray*}
\lim_{L\rightarrow \infty}\lim_{\varepsilon\rightarrow 0}\mathbb{E}\sup_{t\in[0,T]}r^2_{L}(\tilde{u}^{\varepsilon}(t))&=&\lim_{L\rightarrow \infty}\lim_{\varepsilon\rightarrow 0}\mathbb{E}\sup_{t\in[0,T]}\sum^{\infty}_{i=L+1}(\tilde{u}^{\varepsilon}(t),\lambda^{\alpha}_i e_i)^2_{D(A^{-\alpha})}\\
&=&\lim_{L\rightarrow \infty}\lim_{\varepsilon\rightarrow 0}\mathbb{E}\sup_{t\in[0,T]}\sum^{\infty}_{i=L+1}\langle A^{-\alpha}\tilde{u}^{\varepsilon}(t), e_i\rangle^2_{\mathbb{H}^0}\\
&=&\lim_{L\rightarrow \infty}\lim_{\varepsilon\rightarrow 0}\mathbb{E}\sup_{t\in[0,T]}\sum^{\infty}_{i=L+1}\frac{\langle\tilde{u}^{\varepsilon}(t), e_i\rangle^2_{\mathbb{H}^0}}{\lambda^{2\alpha}_i}\\
&\leq&\lim_{L\rightarrow \infty}\frac{\lim_{\varepsilon\rightarrow 0}\mathbb{E}\sup_{t\in[0,T]}\|\tilde{u}^{\varepsilon}(t)\|^2_{\mathbb{H}^0}}{\lambda^{2\alpha}_{L+1}}\\
&=&0,
\end{eqnarray*}
which implies (\ref{eq-65}) holds with $E=D(A^{-\alpha})$.

Choosing $E_0=D(A^{\alpha})$. We now prove $\{\tilde{u}^{\varepsilon}; 0<\varepsilon <\varepsilon_0\}\subset \mathbb{H}^0$ is $E_0-$weakly tight. Let $h\in D(A^{\alpha})$, and $\{\tau_{\varepsilon},\delta_{\varepsilon}\}$ satisfies
\textbf{Condition C}. It's easy to see $\{(\tilde{u}^{\varepsilon}(t), h)_{E}; 0<\varepsilon<\varepsilon_0\}$ is tight on the real line for each $t\in[0,T]$.

We now prove that $\{(\tilde{u}^{\varepsilon}(t), h)_{E}, 0<\varepsilon<\varepsilon_0\}$ satisfies \textbf{(I)}.
It follows from (\ref{equ-58}) that
\begin{eqnarray}\notag
\tilde{u}^{\varepsilon}(\tau_{\varepsilon}+\delta_{\varepsilon})-\tilde{u}^{\varepsilon}(\tau_{\varepsilon})
&=& \int^{\tau_{\varepsilon}+\delta_{\varepsilon}}_{\tau_{\varepsilon}} F(\tilde{u}^{\varepsilon}(s))ds+\int^{\tau_{\varepsilon}+\delta_{\varepsilon}}_{\tau_{\varepsilon}}\int_{\mathbb{Z}}\sigma(s,\tilde{u}^{\varepsilon}(s),z)(\varphi_{\varepsilon}(s,z)-1)\vartheta(dz)ds\\
\notag
&&\ +\int^{\tau_{\varepsilon}+\delta_{\varepsilon}}_{\tau_{\varepsilon}}\int_{\mathbb{Z}}\varepsilon \sigma(s,\tilde{u}^{\varepsilon}(s),z)\Big( {\eta}^{\varepsilon^{-1}\varphi_{\varepsilon}}(ds,dz)-\varepsilon^{-1}\varphi_{\varepsilon}(s,z)\vartheta(dz)ds\Big)\\
\label{equ-62}
&=:&I^{\varepsilon}_1+I^{\varepsilon}_2+I^{\varepsilon}_3,
\end{eqnarray}
It's easy to show
\begin{eqnarray}\label{equ-66}
\lim_{\varepsilon\rightarrow 0}\mathbb{E}|\langle I^{\varepsilon}_3,h \rangle_E|^2=0.
\end{eqnarray}
Referring to (3.14) in \cite{Z-Z}, and by using (\ref{equ-60})-(\ref{equ-61}), we deduce that
\begin{eqnarray}\notag
&&\lim_{\varepsilon\rightarrow 0}\mathbb{E}|\langle I^{\varepsilon}_1,h \rangle_E|\\ \notag
&\leq& \lim_{\varepsilon\rightarrow 0}\|h\|_{\mathbb{H}^0}\mathbb{E}\Big[\int^{\tau_{\varepsilon}+\delta_{\varepsilon}}_{\tau_{\varepsilon}}\|F(\tilde{u}^{\varepsilon})\|_{\mathbb{H}^0}ds\Big]\\
\notag
&\leq&\lim_{\varepsilon\rightarrow 0}\|h\|_{\mathbb{H}^0}\mathbb{E}\Big[\int^{\tau_{\varepsilon}+\delta_{\varepsilon}}_{\tau_{\varepsilon}}(\|\tilde{u}^{\varepsilon}\|^3_{\mathbb{H}^1}+\|\tilde{u}^{\varepsilon}\|_{\mathbb{H}^2})ds\Big]\\
\notag
&\leq&\lim_{\varepsilon\rightarrow 0}\|h\|_{\mathbb{H}^0}\mathbb{E}\Big[\delta_{\varepsilon}\sup_{t\in [0,T]}\|\tilde{u}^{\varepsilon}\|^3_{\mathbb{H}^1}+(\delta_{\varepsilon})^{\frac{1}{2}}\Big(\int^{\tau_{\varepsilon}+\delta_{\varepsilon}}_{\tau_{\varepsilon}}\|\tilde{u}^{\varepsilon}\|^2_{\mathbb{H}^2}ds\Big)^{\frac{1}{2}}\Big]\\
\label{equ-63}
&=&0.
\end{eqnarray}
For $I^{\varepsilon}_2$, we have
\begin{eqnarray*}\notag
&&\lim_{\varepsilon\rightarrow 0}\mathbb{E}|\langle I^{\varepsilon}_2,h \rangle_E|\\ \notag
&\leq& \|h\|_{\mathbb{H}^0}\lim_{\varepsilon\rightarrow 0}\mathbb{E}\Big[\int^{\tau_{\varepsilon}+\delta_{\varepsilon}}_{\tau_{\varepsilon}}\int_{\mathbb{Z}}\|\sigma(s,\tilde{u}^{\varepsilon},z)\|_{\mathbb{H}^0}|\varphi_{\varepsilon}(s,z)-1|\vartheta(dz)ds\Big]\\
\notag
&\leq& \|h\|_{\mathbb{H}^0}\lim_{\varepsilon\rightarrow 0}\mathbb{E}\Big[\sup_{s\in [0,T]}(1+\|\tilde{u}^{\varepsilon}\|_{\mathbb{H}^0})\int^{\tau_{\varepsilon}+\delta_{\varepsilon}}_{\tau_{\varepsilon}}\int_{\mathbb{Z}}\|\sigma(s,z)\|_{0,\mathbb{H}^0}|\varphi_{\varepsilon}(s,z)-1|\vartheta(dz)ds\Big]\\
\notag
&\leq& \|h\|_{\mathbb{H}^0}\lim_{\varepsilon\rightarrow 0}\mathbb{E}\Big[\sup_{s\in [0,T]}(1+\|\tilde{u}^{\varepsilon}\|_{\mathbb{H}^0})\sup_{g\in S^{M}}\int^{\tau_{\varepsilon}+\delta_{\varepsilon}}_{\tau_{\varepsilon}}\int_{\mathbb{Z}}\|\sigma(s,z)\|_{0,\mathbb{H}^0}|\varphi_{\varepsilon}(s,z)-1|\vartheta(dz)ds\Big].
\end{eqnarray*}
By Lemma \ref{lem-2}, we have
\begin{eqnarray}\label{equ-65}
\lim_{\varepsilon\rightarrow 0}\mathbb{E}|\langle I^{\varepsilon}_2,h \rangle_E|=0.
\end{eqnarray}
Based on (\ref{equ-66})-(\ref{equ-65}), we conclude that $\{(\tilde{u}^{\varepsilon}(t), h)_{E}; 0<\varepsilon<\varepsilon_0\}$ satisfies \textbf{Condition D}.

\end{proof}
Fix the solution $\tilde{u}^{\varepsilon}$ of (\ref{equ-58}), consider the following equation:
\begin{eqnarray}\label{equ-67}
d\tilde{Y}^{\varepsilon}(t)=-A\tilde{Y}^{\varepsilon}(t)dt
+\varepsilon\int_{\mathbb{Z}}\sigma(t,\tilde{u}^{\varepsilon}(t-),z)
\Big({\eta}^{\varepsilon^{-1}\varphi_\varepsilon}(dt,dz)-\varepsilon^{-1}\varphi_\varepsilon(t,z)\vartheta(dz)dt\Big),
\end{eqnarray}
with $\tilde{Y}^{\varepsilon}(0)=0$. Referring to Proposition 3.1 in \cite{R-Z}, (\ref{equ-67}) admits a unique solution $\tilde{Y}^{\varepsilon}(t), t\geq 0$. Moreover,
\begin{eqnarray}\label{equ-68}
\tilde{Y}^{\varepsilon}\in \mathcal{D}([0,T];\mathbb{H}^1)\cap L^2([0,T];\mathbb{H}^2),
\end{eqnarray}
and
\begin{eqnarray}\label{equ-69}
\lim_{\varepsilon\rightarrow 0}\mathbb{E}\sup_{t\in[0,T]}\|\tilde{Y}^{\varepsilon}(t)\|^2_{\mathbb{H}^1}+\mathbb{E}\int^T_0\|\tilde{Y}^{\varepsilon}(t)\|^2_{\mathbb{H}^2}dt
=0.
\end{eqnarray}

Now, we are ready to prove (ii) in \textbf{Condition A}. Recall $\mathcal{G}^{\varepsilon}(\varepsilon {\eta}^{\varepsilon^{-1}\varphi_{\varepsilon}})=\tilde{u}^{\varepsilon}$ is defined by (\ref{equ-59}).

\begin{thm}\label{thm-4}
Fix $M\in \mathbb{N}$, and let $\{\varphi_\varepsilon;0<\varepsilon<{\varepsilon}_0\}\subset \mathcal{U}^M, \varphi\in\mathcal{U}^M$ be such that $\varphi_\varepsilon$ converges in distribution to $\varphi$ as $\varepsilon\rightarrow 0$. Then
\begin{eqnarray*}
\mathcal{G}^{\varepsilon}(\varepsilon {\eta}^{\varepsilon^{-1}\varphi_\varepsilon})\ \ {\rm{converges\ in\ distribution\ to}}\ \ \mathcal{G}^{0}(\vartheta^\varphi_T)
\end{eqnarray*}
in $\mathcal{D}([0,T];\mathbb{H}^1)$.
\end{thm}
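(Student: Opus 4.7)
The plan is to verify part (ii) of \textbf{Condition A} via a tightness-plus-identification argument, built around the auxiliary decomposition $\tilde{u}^{\varepsilon}=\tilde{V}^{\varepsilon}+\tilde{Y}^{\varepsilon}$, where $\tilde{Y}^{\varepsilon}$ is the small-noise residual of (\ref{equ-67}) and $\tilde{V}^{\varepsilon}:=\tilde{u}^{\varepsilon}-\tilde{Y}^{\varepsilon}$ is a continuous $\mathbb{H}^1$-valued process satisfying the random skeleton-type equation
\begin{equation*}
d\tilde{V}^{\varepsilon}=-A\tilde{V}^{\varepsilon}dt-B(\tilde{u}^{\varepsilon})dt-\mathcal{P}g_N(|\tilde{u}^{\varepsilon}|^2)\tilde{u}^{\varepsilon}dt+\int_{\mathbb{Z}}\sigma(t,\tilde{u}^{\varepsilon}(t),z)(\varphi_{\varepsilon}(t,z)-1)\vartheta(dz)dt.
\end{equation*}
Because (\ref{equ-69}) forces $\tilde{Y}^{\varepsilon}\to 0$ in probability in $L^{\infty}([0,T];\mathbb{H}^1)\cap L^2([0,T];\mathbb{H}^2)$, it is enough to show that $\tilde{V}^{\varepsilon}\to u^{\varphi}$ in distribution in $C([0,T];\mathbb{H}^1)$; combining the two yields distributional convergence of $\tilde{u}^{\varepsilon}$ in $\mathcal{D}([0,T];\mathbb{H}^1)$ since the limit is continuous.

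First I would establish joint tightness of $(\tilde{u}^{\varepsilon},\tilde{Y}^{\varepsilon},\tilde{V}^{\varepsilon},\varphi_{\varepsilon})$ on $\mathcal{D}([0,T];D(A^{-\alpha}))\times \mathcal{D}([0,T];\mathbb{H}^1)\times \mathcal{D}([0,T];D(A^{-\alpha}))\times S^M$: tightness of $\tilde{u}^{\varepsilon}$ is given by Lemma \ref{lem-13}, tightness of $\tilde{Y}^{\varepsilon}$ is immediate from (\ref{equ-69}), $\varphi_{\varepsilon}$ lives in the compact set $S^M$, and tightness of $\tilde{V}^{\varepsilon}$ follows by the same $W^{\alpha,2}([0,T];\mathbb{H}^0)$ estimate used in Lemma \ref{lem-8} together with Lemma \ref{lem-5}. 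Applying the Skorohod/Jakubowski representation theorem, a subsequence can be realized on a new probability space with $\tilde{u}^{\varepsilon}\to u^{*}$ in $\mathcal{D}([0,T];D(A^{-\alpha}))$ a.s., $\tilde{Y}^{\varepsilon}\to 0$ uniformly in $\mathbb{H}^1$, $\tilde{V}^{\varepsilon}\to u^{*}$ in $\mathcal{D}([0,T];D(A^{-\alpha}))$, and $\varphi_{\varepsilon}\to \varphi$ in $S^M$.

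Next I would identify $u^{*}=u^{\varphi}$. The a priori bounds of Lemma \ref{lem-8} are preserved under the Skorohod copies, so by Lemma \ref{lem-5} a further subsequence gives $\tilde{u}^{\varepsilon}\to u^{*}$ strongly in $L^2([0,T];\mathbb{H}^1)$, weakly in $L^2([0,T];\mathbb{H}^2)$ and weak-star in $L^{\infty}([0,T];\mathbb{H}^1)$. Testing the equation for $\tilde{V}^{\varepsilon}$ against $\psi(t)e_j$ and letting $\varepsilon\to 0$ can then be carried out exactly as in the proofs of Theorem \ref{thm-2} and Proposition \ref{prp-1}: the $A$-term, the nonlinear $B$-term and the taming term $\mathcal{P}g_N(|\cdot|^2)(\cdot)$ are handled by strong $L^2([0,T];\mathbb{H}^1)$ convergence together with the splitting (\ref{equ-75})--(\ref{equ-77}), while the controlled driver $\int_{\mathbb{Z}}\sigma(s,\tilde{u}^{\varepsilon},z)(\varphi_{\varepsilon}-1)\vartheta(dz)$ is passed to $\int_{\mathbb{Z}}\sigma(s,u^{*},z)(\varphi-1)\vartheta(dz)$ by writing the difference as a Lipschitz part in $\tilde{u}^{\varepsilon}-u^{*}$, controlled by Lemma \ref{lem-2}, and a weak-convergence part with fixed kernel $\sigma(\cdot,u^{*},\cdot)$, controlled by Lemma \ref{lem-4}. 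Uniqueness in Theorem \ref{thm-2} then forces $u^{*}=u^{\varphi}$.

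Finally, I would upgrade from $\mathcal{D}([0,T];D(A^{-\alpha}))$ to $C([0,T];\mathbb{H}^1)$ by a pathwise energy estimate on $Z^{\varepsilon}:=\tilde{V}^{\varepsilon}-u^{\varphi}$, which has no jumps. Applying the chain rule to $\|Z^{\varepsilon}(t)\|^2_{\mathbb{H}^1}$ and using the local monotonicity bound (\ref{equ-55}) together with the Hypothesis H1 control on $\int_{\mathbb{Z}}\|\sigma(s,z)\|_{1,\mathbb{H}^1}|\varphi_{\varepsilon}(s,z)-1|\vartheta(dz)$ supplied by Lemma \ref{lem-2} produces a Gronwall inequality entirely parallel to (\ref{equ-56}), reducing the question to a remainder term $\int^{T}_{0}\langle Z^{\varepsilon},\int_{\mathbb{Z}}\sigma(s,u^{\varphi},z)(\varphi_{\varepsilon}-\varphi)\vartheta(dz)\rangle_{\mathbb{H}^1}ds$ which vanishes by Lemma \ref{lem-4}. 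I expect the main obstacle to be carrying the taming term $\mathcal{P}g_N(|\tilde{u}^{\varepsilon}|^2)\tilde{u}^{\varepsilon}-\mathcal{P}g_N(|u^{\varphi}|^2)u^{\varphi}$ across the $\mathbb{H}^1$ inner product in this final energy estimate, since it is absent in the 2D Navier-Stokes analysis of \cite{Z-Z}; however the explicit bounds (\ref{equ-75})--(\ref{equ-77}) together with the uniform $L^2([0,T];\mathbb{H}^2)$ control from Lemma \ref{lem-8} are tailor-made to absorb it, yielding $\sup_{t\in[0,T]}\|Z^{\varepsilon}(t)\|^2_{\mathbb{H}^1}\to 0$ and hence the claimed weak convergence of $\tilde{u}^{\varepsilon}=\tilde{V}^{\varepsilon}+\tilde{Y}^{\varepsilon}$ to $u^{\varphi}$ in $\mathcal{D}([0,T];\mathbb{H}^1)$.
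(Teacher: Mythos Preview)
Your proposal is correct and follows essentially the same route as the paper: tightness of $(\tilde{u}^{\varepsilon},\varphi_{\varepsilon},\tilde{Y}^{\varepsilon})$ in the weaker product space, Skorohod representation, the decomposition $\tilde{u}^{\varepsilon}=\tilde{V}^{\varepsilon}+\tilde{Y}^{\varepsilon}$ (the paper writes $Z^{\varepsilon}=\tilde{u}^{\varepsilon}_1-\tilde{Y}^{\varepsilon}_1$), and then the pathwise $\mathbb{H}^1$-energy argument of Proposition~\ref{prp-1} to obtain $\sup_{t}\|Z^{\varepsilon}-\hat{u}\|^2_{\mathbb{H}^1}\to 0$. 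The paper simply invokes ``the similar argument as Proposition~\ref{prp-1}'' for the last step, which already contains both your weak identification and your energy upgrade; your separate bookkeeping of these two phases is fine but not an essentially different argument. One minor correction: in the final $\mathbb{H}^1$ energy estimate the taming term is absorbed directly by the local monotonicity bound (\ref{equ-55}) (which already encodes $F$, including $g_N$), not by (\ref{equ-75})--(\ref{equ-77}); the latter are $\mathbb{H}^0$ estimates used only in the weak identification step.
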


\begin{proof}
Note that $\mathcal{G}^{\varepsilon}(\varepsilon {\eta}^{\varepsilon^{-1}\varphi_{\varepsilon}})=\tilde{u}^{\varepsilon}$. From Lemma \ref{lem-8}, Lemma \ref{lem-13} and (\ref{equ-69}), we know that
\begin{description}
  \item[1.] $\{\tilde{u}^{\varepsilon}; 0<\varepsilon<\varepsilon_0\}$ is tight in $\mathcal{D}([0,T];D(A^{-\alpha}))\cap L^2([0,T];\mathbb{H}^1)$, \ for\ $\alpha\geq0 $,
  \item[2.]$\lim_{\varepsilon\rightarrow 0}\mathbb{E}\Big[\sup_{t\in[0,T]}\|\tilde{Y}^{\varepsilon}(t)\|^2_{\mathbb{H}^1}+\int^T_0\|\tilde{Y}^{\varepsilon}(t)\|^2_{\mathbb{H}^2}dt\Big]
=0,$
\end{description}
where $\tilde{Y}^{\varepsilon}$ is defined in (\ref{equ-67}). Set
\[
\Pi=\Big(\mathcal{D}([0,T];D(A^{-\alpha}))\cap L^2([0,T];\mathbb{H}^1), \mathcal{U}^M, \mathcal{D}([0,T];\mathbb{H}^1)\cap L^2([0,T];\mathbb{H}^2)\Big).
\]
Let $(\tilde{u}, \varphi, 0)$ be any limit of the tight family $\{(\tilde{u}^{\varepsilon}, \varphi_\varepsilon, \tilde{Y}^{\varepsilon});\varepsilon\in (0,{\varepsilon}_0)\}$. We will show that $\tilde{u}$
has the same law as $\mathcal{G}^{0}(\vartheta^\varphi_T)$ and $\tilde{u}^\varepsilon$ converges in distribution to $\tilde{u}$ in $\mathcal{D}([0,T];\mathbb{H}^1)$.

By the Skorokhod representative theorem, there exists a stochastic basis $(\Omega^1, \mathcal{F}^1, \{\mathcal{F}^1_t\}_{t\in [0,T]}, \mathbb{P}^1)$ and, on this basis, $\Pi-$valued random variables $(\tilde{u}^\varepsilon_1, \varphi^1_\varepsilon,\tilde{Y}^{\varepsilon}_1)$
(resp. $(\tilde{u}_1,\varphi^1,0)$) such that $(\tilde{u}^\varepsilon_1, \varphi^1_\varepsilon,\tilde{Y}^{\varepsilon}_1)$ (resp. $(\tilde{u}_1,\varphi^1,0)$) has the same law as $(\tilde{u}^{\varepsilon}, \varphi_\varepsilon, \tilde{Y}^{\varepsilon})$ (resp. $(\tilde{u},\varphi, 0)$), and $(\tilde{u}^\varepsilon_1, \varphi^1_\varepsilon,\tilde{Y}^{\varepsilon}_1)\rightarrow (\tilde{u}_1,\varphi^1,0)$ in $\Pi$, $\mathbb{P}^1-$a.s.

From the equations satisfied by $(\tilde{u}^{\varepsilon}, \varphi_\varepsilon, \tilde{Y}^{\varepsilon})$, we see that $(\tilde{u}^\varepsilon_1, \varphi^1_\varepsilon,\tilde{Y}^{\varepsilon}_1)$ satisfies the following integral equations:
\begin{eqnarray*}
\tilde{u}^\varepsilon_1(t)-\tilde{Y}^{\varepsilon}_1(t)
&=&u_0-\int^t_0A(\tilde{u}^\varepsilon_1(s)-\tilde{Y}^{\varepsilon}_1(s))ds
-\int^t_0B(\tilde{u}^\varepsilon_1(s),\tilde{u}^\varepsilon_1(s))ds\\
&&\ -\int^t_0\mathcal{P}g_N(|\tilde{u}^\varepsilon_1(s)|^2)\tilde{u}^\varepsilon_1(s)ds
+\int^t_0\int_{\mathbb{Z}}\sigma(s,\tilde{u}^\varepsilon_1(s),z)(\varphi^1_\varepsilon(s,z)-1)\vartheta(dz)ds.
\end{eqnarray*}
and
\begin{eqnarray*}
&&\mathbb{P}^1\Big(\tilde{u}^\varepsilon_1-\tilde{Y}^{\varepsilon}_1\in C([0,T];\mathbb{H}^1)\cap L^2([0,T];\mathbb{H}^2)\Big)\\
&=&\bar{\mathbb{P}}\Big(\tilde{u}^\varepsilon-\tilde{Y}^{\varepsilon}\in C([0,T];\mathbb{H}^1)\cap L^2([0,T];\mathbb{H}^2)\Big)=1.
\end{eqnarray*}

Let $\Omega^1_0$ be the subset of $\Omega^1$ such that $(\tilde{u}^\varepsilon_1, \varphi^1_\varepsilon,\tilde{Y}^{\varepsilon}_1)\rightarrow (\tilde{u}_1,\varphi^1,0)$ in $\Pi$, then $\mathbb{P}^1(\Omega^1_0)=1$ and for any fixed $\omega^1\in\Omega^1_0$,
\begin{eqnarray}\label{eq-71}
\sup_{t\in[0,T]}\|\tilde{u}^\varepsilon_1(\omega^1,t)-\tilde{u}_1(\omega^1,t)\|^2_{\mathbb{H}^1}\rightarrow 0\quad {\rm{as}} \quad \varepsilon\rightarrow 0.
\end{eqnarray}

Set $Z^\varepsilon(t)=\tilde{u}^\varepsilon_1(t)-\tilde{Y}^{\varepsilon}_1(t)$. Then, $Z^\varepsilon(\omega^1,t)\in C([0,T];\mathbb{H}^1)\cap L^2([0,T];\mathbb{H}^2) $, and $Z^\varepsilon(\omega^1,t)$
satisfies
\begin{eqnarray*}
Z^\varepsilon(t)&=&u_0-\int^t_0AZ^\varepsilon(s)ds
-\int^t_0B(Z^\varepsilon(s)+\tilde{Y}^{\varepsilon}_1(s),Z^\varepsilon(s)+\tilde{Y}^{\varepsilon}_1(s))ds\\
&&\ -\int^t_0\mathcal{P}g_N(|Z^\varepsilon(s)+\tilde{Y}^{\varepsilon}_1(s)|^2)(Z^\varepsilon(s)+\tilde{Y}^{\varepsilon}_1(s))ds\\
&&\ +\int^t_0\int_{\mathbb{Z}}\sigma(s,Z^\varepsilon(s)+\tilde{Y}^{\varepsilon}_1(s),z)(\varphi^1_\varepsilon(s,z)-1)\vartheta(dz)ds,
\end{eqnarray*}
Define $\hat{u}(t)$ be the solution of
\begin{eqnarray*}
\hat{u}(t)&=&u_0-\int^t_0A\hat{u}(s)ds-\int^t_0B(\hat{u}(s),\hat{u}(s))ds-\int^t_0\mathcal{P}g_N(|\hat{u}(s)|^2)\hat{u}(s)ds\\
&&\ +\int^t_0\int_{\mathbb{Z}}\sigma(s,\hat{u}(s),z)(\varphi^1(s,z)-1)\vartheta(dz)ds.
\end{eqnarray*}

Since
\[
\lim_{\varepsilon\rightarrow 0}\Big[\sup_{t\in[0,T]}\|\tilde{Y}^{\varepsilon}(\omega^1,t)\|^2_{\mathbb{H}^1}+\int^T_0\|\tilde{Y}^{\varepsilon}(\omega^1,t)\|^2_{\mathbb{H}^2}dt\Big]
=0,
\]
we have
\begin{eqnarray}\notag
&&\lim_{\varepsilon\rightarrow 0}\sup_{t\in[0,T]}\|\tilde{u}^{\varepsilon}_1(\omega^1,t)-\hat{u}(\omega^1,t)\|^2_{\mathbb{H}^1}\\ \notag
&\leq& \lim_{\varepsilon\rightarrow 0}\sup_{t\in[0,T]}[\|Z^{\varepsilon}(\omega^1,t)-\hat{u}(\omega^1,t)\|^2_{\mathbb{H}^1}+\|\tilde{Y}^{\varepsilon}_1(\omega^1,t)\|^2_{\mathbb{H}^1}]
 \\ \label{eq-72}
&\leq&\lim_{\varepsilon\rightarrow 0}\sup_{t\in[0,T]}\|Z^{\varepsilon}(\omega^1,t)-\hat{u}(\omega^1,t)\|^2_{\mathbb{H}^1}.
\end{eqnarray}
Using the similar argument as Proposition \ref{prp-1}, we obtain
\begin{eqnarray}\label{eq-73}
\lim_{\varepsilon\rightarrow 0}\sup_{t\in[0,T]}\|Z^{\varepsilon}(\omega^1,t)-\hat{u}(\omega^1,t)\|^2_{\mathbb{H}^1}=0.
\end{eqnarray}
Hence, combining (\ref{eq-72}) and (\ref{eq-73}), we deduce that
 \begin{eqnarray}\label{eq-72-1}
\lim_{\varepsilon\rightarrow 0}\sup_{t\in[0,T]}\|\tilde{u}^{\varepsilon}_1(\omega^1,t)-\hat{u}(\omega^1,t)\|^2_{\mathbb{H}^1}=0,
\end{eqnarray}
which imply that
 $\tilde{u}_1=\hat{u}=\mathcal{G}^0(\vartheta^{\varphi^1})$, and $\tilde{u}$ has the same law as $\mathcal{G}^0(\vartheta^{\varphi})$.
Since $\tilde{u}^{\varepsilon}=\tilde{u}^{\varepsilon}_1$ in law, we deduce from (\ref{eq-72-1}) that $\tilde{u}^{\varepsilon}$ converges to $\mathcal{G}^0(\vartheta^{\varphi})$ . We complete the proof.
\end{proof}

%%%%%%%%%%%%%%%%%%%%%%%%%%%%%%%%%%%%%%%%%%%%%%%%%%%%%%%%%%%%%%%%%%%%%%%%%%%%%%%%%%%%%%%%%%%%%%%%%%%%%%%%%%%%%%%
\vskip 0.2cm
\noindent{\bf  Acknowledgements}\
This work was supported by National Natural Science Foundation of China (NSFC) (No. 11431014, 11801032),
Key Laboratory of Random Complex Structures and Data Science, Academy of Mathematics and Systems Science, Chinese Academy of Sciences (No. 2008DP173182).
%%%%%%%%%%%%%%%%%%%%%%%%%%%%%%%%%%%%%%%%%%%%%%%%%%%%%%
\def\refname{ References}


\begin{thebibliography}{2}

%\bibitem{A} R.A. Adams:  \emph{Sobolev Space}. New York: Academic Press, 1975.
\bibitem {A-B-W} S. Albeverio, Z. Brze\'{z}niak, J. Wu: \emph{Existence of global solutions and invariant measures for stochastic differential equations driven by Poisson type noise with non-Lipschitz coefficients.}  J. Math. Anal. Appl. 371, no. 1, 309-322 (2010).
\bibitem{A-1} D. Aldous: \emph{Stopping times and tightness.} Ann. Probab. 6 335-340 (1978).
%\bibitem{Au} Z. Brze\'{z}niak, E. Hausenblas, P. Razafimandimby: \emph{Stochastic Nonparabolic dissipative systems modelling the flow of liquid crystals: strong solution}. RIMS Kokyuroku Proceeding of RIMS Symposium on Mathematical Analysis of Incompressible Flow: 41-72 (2014).
\bibitem{BHZ} Z. Brze\'{z}niak, E. Hausenblas, J. Zhu: \emph{2D Navier-Stokes equations driven by jump noise.} Nonlinear Anal. 79, 122-139 (2013).
\bibitem{BLZ} Z. Brze\'{z}niak, W. Liu, J. Zhu: \emph{Strong solutions for SPDE with locally
monotone coefficients driven by L\'{e}vy noise}. Nonlinear Anal. Real World Appl. 17, 283-310 (2014).
%\bibitem{Au-1} Z. Brze\'{z}niak, E. Hausenblas, P. Razafimandimby: \emph{Some results on a system of nonlinear SPDEs with multiplicative noise arising in the dynamics of nematic liquid crystals.}
%\bibitem{B-M-P} Z. Brze\'{z}niak, U. Manna, A. A. Panda: \emph{Existence of weak martingale solution of nematic liquid crystals driven by pure jump noise.}	arXiv:1706.05056 (2017).

\bibitem{B-C-D} A. Budhiraja, J. Chen, P. Dupuis: \emph{Large deviations for stochastic partial differential equations driven by a Poisson random measure.}
Stochastic Process. Appl. 123, no. 2, 523-560 (2013).
%\bibitem{B-H-R} Z. Brzezniak, E. HHausenblas, P. Razafimandimby: \emph{Some results on a system of nonlinear SPDEs with multiplicative noise arising in the dynamics of nematic liquid crystals.}

% \bibitem{MP} M. Bou\'{e}, P. Dupuis: \emph{A variational representation for certain functionals of Brownian motion}. Ann. Probab. 26 (4) 1641-1659 (1998).
%\bibitem{BD} A. Budhiraja; P. Dupuis:  \emph{A variational representation for positive functionals of infinite dimensional Brownian motion}. Probab. Math. Statist. 20, 39-61 (2000).
\bibitem{B-D-M} A. Budhiraja, P. Dupuis and V. Maroulas : \emph{Variational representations for continuous time processes}. Ann. Inst. Henri Poincar\'{e} Probab. Stat. 47, no. 3, 725-747 (2011).

 %\bibitem{Ch} S. Chandrasekhar: \emph{Liquid Crystals.} Cambridge University Press (1992).








%\bibitem{Er} J.L. Ericksen: \emph{Conservation laws for liquid crystals.} Trans. Soc. Rheology, 5: 23-34 (1961).
\bibitem{FG95}  F. Flandoli, D. Gatarek: \emph{Martingale and stationary solutions for stochastic Navier-Stokes equations}. Probab. Theory Related Fields 102, no. 3, 367-391 (1995).
%\bibitem{G-P} P.G. Gennes, J. Prost: \emph{The Physics of Liquid Crystals.} Clarendon Press, Oxford (1993).
\bibitem{I} A. Ichikawa: \emph{Some inequalities for martingales and stochastic convolutions}. Stoch. Anal. Appl. 4 329-339 (1986).


\bibitem {I-W} N. Ikeda, Nobuyuki, S. Watanabe: \emph{Stochastic differential equations and diffusion processes.} Second edition. North-Holland Mathematical Library, 24.
    \bibitem {J} A. Jakubowski: \emph{On the Skorokhod topology.}
Ann. Inst. H. Poincar\'{e} Probab. Statist. 22, no. 3, 263-285 (1986).
\bibitem{Lions} P.L. Lions: \emph{Mathematical Topics in Fluid Mechanics.} Volume1, in: Incompressible Models. Oxford Lect. Series in Math and its App., vol.3, 1996.

\bibitem{M-S} J.L. Menaldi, S.S. Sritharan: \emph{Stochastic 2D Navier-Stokes equation.} Appl. Math. Optim. 46, 31-53 (2002).


%\bibitem{K90} N.V. Krylov: \emph{A simple proof of the existence of a solution to the It\^{o} equation with monotone coefficients.} Theory probab. Appl. 35 (3) 583-587 (1990).


%\bibitem{Le} F.M. Leslie: \emph{Some constitutive equations for liquid crystals.} Arch. Rational Mech. Anal. 28(04): 265-283 (1968).


%\bibitem{L-L} F. Lin, C. Liu: \emph{Nonparabolic dissipative systems modelling the flow of liquid crystals.} Communications on Pure and Applied Mathematics, Vol. XLVIII: 501-537 (1995).






%\bibitem{C-T-1} C. Cao, E.S. Titi: \emph{Global well-posedness of the three-dimensional viscous primitive equations of large-scale ocean and atmosphere dynamics}. Ann. of Math. 166, 245-267 (2007).
%\bibitem{C-T-2} C. Cao, E.S. Titi: \emph{Global well-posedness and finite-dimensional global attractor for a 3D planetary geostrophic viscous model}. Comm. Pure Appl. Math. 56, no. 2, 198-233 (2003).
%\bibitem{RA} H. Crauel, A. Debussche, F. Flandoli: \emph{Random Attractor}.  J. Dynam. Differential Equations 9 , no. 2, 307-341 (1997).
%    \bibitem{CF} H. Crauel, F. Flandoli: \emph{Attractors for random dynamical systems}.
%Probab. Theory Related Fields 100, no. 3, 365-393 (1994).
%\bibitem{D-G-T} A. Debussche, N. Glatt-Holtz, R. Temam: \emph{Local martingale and pathwise solutions for an abstract fluids model}. Physical D 240 1123-1144 (2011).




%\bibitem{D-G-T-Z}  A. Debussche, N. Glatt-Holtz, R. Temam, M. Ziane: \emph{Global existence and regularity for the 3D stochastic primitive equations of the ocean and atmosphere with
%multiplicative white noise}.  Nonlinearity, 25, 2093-2118 (2012).
%\bibitem{DHV} A. Debussche, M. Hofmanov\'{a}, and J. Vovelle: \emph{Degenerate parabolic stochastic partial differential equations: Quasilinear case}. Ann. Probab. 44, no. 3, 1916-1955 (2016).
%\bibitem{DDMH} A. Debussche, S. De Moor and M. Hofmanov\'{a}: \emph{A regurity result for quasilinear stochastic partial differnential equations of parabolic type.} SIAM J.Math.Anal. 47, no.2, 1590-1614 (2015).
%
% \bibitem{DZ} A. Dembo, O. Zeitouni: \emph{Large deviations techniques and applications}. Jones and Bartlett, Boston, (1993).
%     %\bibitem{RR} Z. Dong, J. Zhai, R. Zhang: \emph{Exponential mixing for 3D stochastic primitive         equations of the large scale ocean.} preprint. Available at arXiv: 1506.08514.
%  \bibitem{DE} P. Dupuis, R.S. Ellis: \emph{A weak convergence approach to the theory of large deviations.} Wiley, New York, 1997.

%\bibitem{C} Chavel, I. \emph{Eigenvalues in Riemannian geometry.} Pure and Applied Mathematics, 115. Academic, Orlando, Fla., 1984.
%\bibitem{G-S} H. Gao, C. Sun: \emph{Well-posedness and large deviations for the stochastic primitive equations in two space dimensions}. Commun. Math. Sci. Vol.10, No.2, 575-593 (2012).
  % \bibitem{Gr} A. Grigor'yan: \emph{Heat kernal and analysis on manifold.} AMS/IP, 2009.
%\bibitem{GMT} G. Gagneux, M. Madaune-Tort, \emph{Analyse math\'{e}matique de mod\'{e}les non lin\'{e}aires de l'ing\'{e}nierie p\'{e}troli\'{e}re}, Springer-Verlag, Berlin, 1996.
%
%\bibitem{GK} I. Gy\"{o}ngy, N. Krylov:
%\emph{Existence of strong solutions for It\^{o}'s stochastic equations via approximations}.
%Probab. Theory Related Fields 105, no. 2, 143-158 (1996).
%
%
%
%%\bibitem{Guo} B. Guo, D. Huang: \emph{3D stochastic primitive equations of the large-scale ocean: global well-posedness and attractors .} Comm. Math. Phys. 286, no. 2, 697-723 (2009).
%    %\bibitem{Hairer} Hairer, M.: \emph{Exponential Mixing Properties of stochastic PDEs through Asymptotic coupling.} Prob. Theory Rel. Fields 124(3), 345-380 (2002).
%     %\bibitem{HTZ} C. Hu, R. Temam, M. Ziane: \emph{ Regularity results for linear elliptic problems related to the primitive equations}. Frontiers in mathematical analysis and numerical methods, World Sci. Publ., River Edge, NJ, 149-170 (2004).
%     %\bibitem{K} S. Kuksin: \emph{On exponential convergence to a stationary measure for nonlinear PDEs.} Partial differential equations, 161-176 (2002).
%%    %\bibitem{K-S} Kuksin, S., Shirikyan, A. \emph{Ergodicity for the randomly forced 2D Navier-Stokes equations.} Math. Phys. Anal.Geom. 4, 147-195 (2001).
%%    \bibitem{K-S-1} S. Kuksin, A. Shirikyan: \emph{A coupling approach to randomly forced PDE's I.} Comm. Math. Phys. 221, no. 2, 351-366 (2001).
%%\bibitem{K-S-2} S. Kuksin, A. Piatnitski, A.  Shirikyan: \emph{A coupling approach to randomly forced PDE's II.} Comm. Math. Phys. 230 (1), 81-85 (2002).
%
%\bibitem{H-Z}M. Hofmanov\'{a}, T.S. Zhang: \emph{Quasilinear parabolic stochastic differential equations: existence, uniqueness.}  Preprint. Available at arXiv:1501.00548.

%\bibitem{L-T-W-1} J.L. Lions, R. Temam, S. Wang: \emph{New formulations of the primitive equations of atmosphere and applications.} Nonlinearity 5, 237-288 (1992).
% \bibitem{L-T-W-2} J.L. Lions, R. Temam, S. Wang: \emph{On the equations of the large scale ocean.} Nonlinearity 5, 1007-1053 (1992).
%\bibitem{L-T-W-3} J.L. Lions, R. Temam, S. Wang: \emph{Models of the coupled atmosphere and ocean.} Computational Mechanics Advance 1, 1-54 (1993).
%\bibitem{L-T-W-4} J.L. Lions, R. Temam, S. Wang: \emph{Mathematical theory for the coupled atmosphere-ocean models.} J. Math. Pures Appl. 74, 105-163 (1995).
   % \bibitem{LR} W. Liu, M. R\"{o}ckner: \emph{SPDEs in Hilbert space with locally monotone coefficients.} Journal of Functional Analysis 259, 2902-2922 (2010).
   % \bibitem{Liu} W. Liu, M. R\"{o}ckner, X. Zhu: \emph{Large deviation principles for the stochastic quasi-geostrophic equations}. Stochastic Process. Appl. 123, no. 8, 3299-3327 (2013).
     %\bibitem{M} Mattingly, J. \emph{Exponential converggence for the stochastically forced Navier-Stokes equations and other partially dissipative dynamics.} Commum. Math. Phys. 230, 421-462 (2002).
         %\bibitem{O-C} C. Odasso: \emph{Exponential mixing for stochastic PDEs: the non-additive case}  Probab. Theory Related Fields 140, no. 1-2, 41-82 (2008).
%\bibitem{O-C-1} C. Odasso: \emph{Exponential mixing for the 3D stochastic Navier-Stokes equations}  Comm. Math. Phys. 270, no. 1, 109-139 (2007).
 %\bibitem{O-C-2} Odasso, C. \emph{Ergodicity for the stochastic Complex Ginzburg-Landau equations.} Ann. Inst. H. Poincar¨¦ Probab. Statist. 42, no. 4, 417¨C454 (2006).
 %\bibitem{JP} J. Pedlosky: \emph{Geophysical Fluid Dynamics}. Springer-Verlag, New York, 1987.
%
%\bibitem{STW} R. Samelson, R. Temam, S. Wang: \emph{Some mathematical properties of the planetary geostrophic equations for large-scale ocean circulation.} Appl. Anal. 70, no. 1-2, 147-173 (1998).
\bibitem{M-R} R. Mikulevicius, B.L. Rozovskii: \emph{ Global $L^2$ solution of stochastic Navier-Stokes equations.} Ann. Probab. 33 (1) 137-176 (2005).
\bibitem{R-TS} M. R\"{o}ckner, T. Zhang: \emph{Stochastic evolution equations of jump type: existence, uniqueness and large deviation principles.} Potential Anal. 26, no. 3, 255-279 (2007).
\bibitem{R-Z} M. R\"{o}ckner, T. Zhang: \emph{Stochastic 3D tamed Navier-Stokes equations: existence, uniqueness and small time large deviation principles.} J. Differential Equations 252, no. 1, 716-744 (2012).

\bibitem{R-Z-Z} M. R\"{o}ckner, T. Zhang, X. Zhang: \emph{Large Deviations for Stochastic Tamed 3D Navier-Stokes Equations.} Appl. Math. Optim. 61, no. 2, 267-285 (2010).
\bibitem{R-Z-0} M. R\"{o}ckner, X. Zhang: \emph{Tamed 3D Navier-Stokes equation: existence, uniqueness and regularity }. Infin. Dimens. Anal. Quantum Probab. Relat. Top. 12, no. 4, 525-549 (2009).
\bibitem{R-Z-0-0} M. R\"{o}ckner, X. Zhang: \emph{Stochastic tamed 3D Navier-Stokes equations: existence, uniqueness and ergodicity.} Probab. Theory Related Fields 145, no. 1-2, 211-267 (2009).




\bibitem{S-S} S.S. Sritharan, P. Sundar: \emph{Large deviations for the two-dimensional Navier-Stokes equations with multiplicative noise}. Stochastic Process. Appl. 116, 1636-1659 (2006).

\bibitem{S-Z} A. \'{S}wiech,  J. Zabczyk: \emph{
Large deviations for stochastic PDE with L\'{e}vy noise.} J. Funct. Anal. 260, no. 3, 674-723 (2011).
\bibitem {S-Z-Z} S. Shang, J. Zhai, T. Zhang: \emph{Strong solutions for a stochastic model of two-dimensional second grade fluids driven by L\'{e}vy noise.} J. Math. Anal. Appl. 471, no. 1-2, 126-146 (2019).


\bibitem{Temam-1} R. Temam: \emph{Navier-Stokes equations and nonlinear functional analysis}. Second edition. CBMS-NSF Regional Conference Series in Applied Mathematics, 66. Society for Industrial and Applied Mathematics (SIAM), Philadelphia, PA, 1995.

%\bibitem{Z} M. Ziane: \emph{Regularity results of Stokes type system}.
%App. Anal.58 (1995), no. 3-4, 263-292.

%\bibitem{PR} C. Pr\'{e}v\^{o}t, M. R\"{o}ckner:  \emph{A concise course on stochastic partial differential equations}.
%Springer-Verlag  2007.
% \bibitem{W-T}  M. Warner, E. Terentjev: \emph{Liquid Crystal Elastomers.} International Series of Monographs on Physics, Oxford University Press (2003).
\bibitem{Y-Z-Z} X. Yang, J. Zhai, T. Zhang: \emph{Large deviations for SPDEs of jump type.}
Stoch. Dyn. 15, no. 4, 1550026, 30 pp (2015).
\bibitem{Z-Z} J. Zhai, T. Zhang: \emph{Large deviations for 2D stochastic Navier-Stokes equations driven by multiplicative L\'{e}vy noises. }
Bernoulli 21, no. 4, 2351-2392 (2015).
\end{thebibliography}
\end{document}